\documentclass[12pt,a4paper]{amsart}

\usepackage{graphicx}
\usepackage{enumerate}
\usepackage{amsmath, amssymb, amsthm, amscd}
\usepackage{xcolor}
\usepackage{caption}
\usepackage{subcaption}
\usepackage{multirow}
\usepackage{stmaryrd}
\usepackage{accents}


 \oddsidemargin  -0.04cm
 \evensidemargin -0.04cm
 \textwidth      16.59cm
 \textheight     21.94cm

\theoremstyle{plain}
\newtheorem{theorem}{Theorem}[section]
\newtheorem{lemma}[theorem]{Lemma}

\newtheorem{proposition}[theorem]{Proposition}

\theoremstyle{definition}
\newtheorem{definition}[theorem]{Definition}

\theoremstyle{remark}
\newtheorem{remark}[theorem]{Remark}
\setlength{\abovedisplayskip}{1pt}
\setlength{\belowdisplayskip}{1pt}

\numberwithin{equation}{section}


\def\bold#1{\mbox{\boldmath $#1$}}
\newcommand{\uu}[1]{\bold{#1}}


\newcommand{\abs}[1]{\lvert#1\rvert}
\newcommand{\D}{\partial}
\newcommand{\dd}{\mathrm{d}}
\newcommand{\dive}{\mathrm{div}}
\newcommand{\bdive}{\uu{\mathrm{div}}}

\newcommand{\Dt}{\partial_t}

\newcommand{\divM}{\mathrm{div}_{\mathcal{M}}}
\newcommand{\diam}{\text{diam}}

\newcommand{\gm}{\gamma}

\newcommand{\bgrd}{\uu{\nabla}}

\newcommand{\mbb}{\mathbb}

\newcommand{\mcal}{\mathcal}

\newcommand{\norm}[1]{\lVert#1\rVert}
\newcommand{\Norm}[1]{{\left\vert\kern-0.25ex\left\vert\kern-0.25ex\left\vert #1 
    \right\vert\kern-0.25ex\right\vert\kern-0.25ex\right\vert}}

\newcommand{\half}{\frac{1}{2}}
\newcommand{\veps}{\varepsilon}

\newcommand{\s}{\sigma}


\newcommand{\M}{\mcal{M}}
\newcommand{\E}{\mcal{E}}
\newcommand{\Q}{Q}

\newcommand{\Ds}{D_\sigma}
\newcommand{\Eds}{\bar{\mcal{E}}(D_\sigma)}
\newcommand{\Eint}{\mcal{E}_{\mathrm{int}}}
\newcommand{\Ek}{\mcal{E}(K)}
\newcommand{\Lm}{L_{\M}(\Omega)}

\newcommand{\Hez}{\uu{H}_{\E,0}(\Omega)}

\newcommand{\dt}{\delta t}
\newcommand{\chark}{\mcal{X}_K}

\newcommand{\fsigk}{F_{\sigma , K}}
\newcommand{\fesig}{F_{\epsilon , \sigma}}

\newcommand{\intr}{\mathrm{int}}
\newcommand{\extr}{\mathrm{ext}}


\newcommand{\absq}[1]{\abs{#1}^2}
\def\bold#1{\mbox{\boldmath $#1$}}

\newcommand\ubar[1]{%
\underaccent{\bar}{#1}}



\makeindex             


\begin{document}

\title[Asymptotic Preserving and Energy Stable Scheme]{An Asymptotic Preserving and Energy Stable Scheme for the Euler-Poisson System in the Quasineutral Limit}         

\author[Arun]{K.~R.~Arun}
\thanks{K.~R.~A.\ gratefully acknowledges Core Research Grant - CRG/2021/004078 from Science and Engineering Research Board, Department of Science \& Technology, Government of India. R.~G.\ would like to thank Minstry of Education, Government of India for the PMRF fellowship support.} 
\address{School of Mathematics, Indian Institute of Science Education and Research Thiruvananthapuram, Thiruvananthapuram 695551, India}  
\email{arun@iisertvm.ac.in, rahuldev19@iisertvm.ac.in, mainak17@iisertvm.ac.in}

\author[Ghorai]{Rahuldev Ghorai}

\author[Kar]{Mainak Kar}

\date{\today}

\subjclass[2010]{Primary 35L45, 35L60, 35L65, 35L67; Secondary 65M06, 65M08}

\keywords{Euler-Poisson system, Debye length, Quasineutral limit, Asymptotic preserving, Finite volume method, MAC grid, Entropy stability}   

\maketitle

\begin{abstract}
      An asymptotic preserving and energy stable scheme for the Euler-Poisson system under the quasineutral scaling is designed and analysed. Correction terms are introduced in the convective fluxes and the electrostatic potential, which lead to the dissipation of mechanical energy and the entropy stability. The resolution of the semi-implicit in time finite volume in space fully-discrete scheme involves two
    steps: the solution of an elliptic problem for the potential and an explicit evaluation for the density and velocity. The proposed scheme possesses several physically relevant attributes, such as the
    the entropy stability and the consistency with the weak formulation of the continuous Euler-Poisson system. The AP property of the scheme, i.e.\ the boundedness of the mesh parameters with respect to the Debye length and its consistency with the quasineutral limit system, is shown. The results of numerical case studies are presented to substantiate the robustness and efficiency of the proposed method.     
\end{abstract}

\section{Introduction}
\label{sec:Int}

In physics, the matter in the universe is broadly classified into four different states: solids, liquids, gases and plasma. Often, plasma is visualised as a gas albeit its constituent particles are not neutral, instead they are negatively charged electrons and positively charged ions. Consequently, a plasma responds to both electric and magnetic fields and its dynamics is completely different from that of neutral gases. Mathematically, a plasma is modeled primarily by two different kinds of models: kinetic models and fluid models. Both these models can be used to compute the particle locations, their velocities, and the electro-magnetic fields present in the plasma. In a kinetic model, the plasma is represented by a distribution function of the particles at each point in a phase space. On the other hand, fluid models are developed based on macroscopic quantities, such as the mass density of plasma particles, their average velocity, pressure and so on. Even though kinetic models are found to be more accurate than fluid models, their applicability in numerical computations is way less than that of fluid models due to computational expenses associated with the higher dimensions in kinetic models. We refer the reader to, e.g.\ \cite{Che13,KT86} for more details on the physics of plasma, their mathematical models and analysis.

In this paper, we focus our attention on a one-fluid Euler-Poisson (EP) system which is a macroscopic hydrodynamic model commonly used in the mathematical and numerical modelling of plasma flows \cite{Che13} and semiconductor devices \cite{Jue01}. The system of governing equations consists of the Euler equations for the conservation laws of mass and momentum and a Poisson equation for the electrostatic potential. Though more realistic models will involve multiple equations for the conservation of mass and momentum corresponding to each species, such as the electrons and ions, the one-fluid model captures many of the essential mathematical properties of the fluid models of plasma. The literature on the analysis of the Euler-Poisson system leading to the existence and uniqueness is quite vast; we refer the interested reader to \cite{Bez93,BK18,Gam93,Mak86,MP90,MU87,MN95} and the references cited therein.

The one-fluid EP system upon a non-dimensionalisation of the variables gives rise to two characteristic paprameters, namely the Debye length and the electron plasma period \cite{KT86}, which play a significant role in its analysis. The Debye length measures charge imbalances in the plasma, while the electron plasma period quantifies oscillations caused by electrostatic restoring forces when such imbalances occur. We focus on the scenarios where these parameters become exceedingly small compared to typical macroscopic scales, known as the quasineutral regime, where local electric charge is absent in the plasma. As a result, even accidental charge imbalances, like numerical artifacts, can trigger high-frequency plasma oscillations. Accurately resolving these micro-scale phenomena poses a challenge for standard explicit numerical schemes. It requires using space and time steps smaller than both the Debye length and the electron plasma period. Failure to meet these requirements leads to numerical instabilities; see, e.g.\ \cite{CDV17,CDV07} for more discussions.

In order to construct accurate and robust numerical schemes for the EP system which sustain large variations of the Debye length, we use the `Asymptotic Preserving' (AP) methodology which was initially presented in the context of kinetic models of diffusive transport \cite{Jin99}. The working principle behind the AP framework can be explained as follows. Let $\mcal{P}_{\veps}$ be the problem at hand, where $\veps$ represents the perturbation parameter. As $\veps$ tends to zero, assume that the solution of $\mcal{P}_{\veps}$ converges to the solution of a well-posed problem $\mcal{P}_{0}$. This well-posed problem is referred to as the singular limit or the limit problem. A numerical scheme $\mcal{P}_{\veps}^{h}$ for $\mcal{P}_{\veps}$ with discretisation parameter $h$ is said to be AP if it converges to a numerical scheme $\mcal{P}_{0}^{h}$ which is a consistent discretisation of the limit problem $\mcal{P}_{0}$ as $\veps \to 0$ and the stability constraints on $\mcal{P}_{\veps}^{h}$ remain unaffected even when the order of magnitude of $\veps$ changes drastically.  In this regard, the AP schemes offer a remarkable versatility as they enable the use of uniform discretisations for both $\mcal{P}_{\veps}$ and $\mcal{P}_{0}$ and they are well capable of capturing the transition between $\mcal{P}_\veps$ and $\mcal{P}_0$ automatically without any further intervention. For these reasons, the AP methodology presents itself as a natural selection for the numerical approximation of the quasineutral limit in the sense that it preserves the limit at a discrete level; see, e.g.\ \cite{BC07, BCL+02, BD06, CDV07, Deg13, FJ11} for applications of AP schemes for EP and several other models.

A typical approach to derive AP schemes is the construction of a discretization of the given model using a semi-implicit time-stepping; see e.g. \cite{AS20,BAL+14,BRS18,Deg13,NBA+14,TPK20} for some developments in this direction. The computational advantages of adopting the semi-implicit formalism is that, it avoids inverting large and dense matrices, a common challenge posed by fully-implicit schemes. At the same time, it effectively overcomes the restrictive stability conditions often encountered in fully-explicit schemes. In the context of numerical approximations for hyperbolic models of compressible fluids, another critical requirement is to establish the entropy stability as it becomes vital in simulating physically valid weak solutions that are known to develop discontinuities over finite time. An entropy stable scheme with semi-implicit time stepping can be achieved by introducing an implicit correction to an explicit term present in the numerical flux. The choice for the implicit correction term is made aposteriori based on the requirements for entropy stability. We refer to \cite{DVB17, DVB20} for such constructions of entropy stable semi-implicit scheme for hyperbolic systems with source terms.

In this paper, our primary objective is to design and analyse an AP, semi-implicit and entropy stable finite volume scheme for the EP system on a MAC grid. The essence of energy stability lies in introducing a shifted momentum in the convective fluxes and a shifted potential in the source term in the momentum equation; see \cite{AGK22,DVB17,GVV13} for related treatments. The momentum shift, directly proportional to the combined pressure gradient and electrostatic source term and the potential shift, proportional to the divergence of momentum, play a vital role in stabilising the flow by dissipating mechanical energy at all Debye lengths. In order to overcome stiff stability restrictions, a semi-implicit time discretisation has been used. The fully-discrete scheme satisfies the essential apriori entropy stability inequalities analogous to those found in the continuous model. Suitably combining the stabilised mass equation and the Poisson equation leads to a well-posed linear elliptic problem for the electrostatic potential. After resolving the elliptic equation, we can solve for the density and velocity explicitly from the mass and momentum updates respectively. Through a meticulous weak consistency analysis we confirm that the numerical scheme aligns with the weak formulation of the EP system as the mesh is refined. At the end, we establish the AP property of the scheme both theoretically and numerically.

Rest of this paper is organised as follows. In Section~\ref{sec:cont-case} we recall some apriori energy stability estimates and the momentum and potential stabilisation technique based on the apriori energy 
stability considerations. The discretisation of the domain and the discrete differential operators are introduced in Section~\ref{sec:MAC_disc_diff}. The semi-implicit numerical scheme and its energy stability attributes are presented in Section~\ref{sec:SI_scheme}. In Section~\ref{sec:weak_cons} we present the weak consistency results. The results of numerical case studies are presented in Section~\ref{sec:num_res}. Finally, the paper is concluded with some remarks in Section~\ref{sec:conclusion}.

\section{Euler-Poisson System and its Quasineutral Limit}
\label{sec:cont-case}
We start with the following Euler-Poisson system, parametrised by the Debye length:
\begin{subequations}
\label{eq:ep}
\begin{align}
  \D_t\rho^\veps+\dive (\rho^\veps\uu{u}^\veps)&=0, \label{eq:cons_mas}\\ 
  \D_t(\rho^\veps\uu{u}^\veps)+\bdive
  (\rho^\veps\uu{u}^\veps\otimes\uu{u}^\veps)+\bgrd
  p(\rho^\veps) &=\rho^\veps\bgrd\phi^\veps, \label{eq:cons_mom}\\
  \veps^{2}\Delta\phi^\veps&=\rho^\veps-1, \label{eq:poisson}\\
  \rho^\veps\vert_{t=0}=\rho^\veps_0, \quad
  \uu{u}^\veps\vert_{t=0}=\uu{u}^\veps_0,\label{eq:eq_ic}
\end{align}
\end{subequations}
for $(t, \uu{x})\in Q_T:=(0, T ) \times \Omega$, where $T>0$ and $\Omega$ is an open, bounded and connected subset of $\mathbb{R}^d$, $d\geq1$. Apart from the initial conditions \eqref{eq:eq_ic}, the Poisson equation \eqref{eq:poisson} is supplemented with periodic, Dirichlet, Neumann or mixed boundary conditions of the form
\begin{equation}
\label{eq:cont_bdary}
\phi^\veps = \phi^D_\veps\;\text{on}\;\Gamma^D\subset\D\Omega,\;    \bgrd\phi^\veps\cdot\uu{\nu} = 0\;\text{on}\;\Gamma^N = \D\Omega\backslash\Gamma^D, 
\end{equation} 
where $\uu{\nu}$ denotes the unit outward normal vector on $\D\Omega$. The parameter $\veps\in (0,1]$ is the scaled Debye length which is characteristic length over which ions and electrons can be separated in a plasma \cite{Che13}. The unknowns $\rho^\veps,\uu{u}^\veps,\phi^\veps$ are the electron density, electron velocity and electrostatic potential respectively. The pressure $p$ is assumed to follow a barotropic equation of state $p(\rho):=\rho^\gamma$, with $\gamma \geq 1$ being the ratio of specific heats.

We start by obtaining some apriori energy estimates satisfied by the solutions of the system \eqref{eq:ep}. As a first step, we define the internal energy per unit volume or the so-called Helmholtz function 
$\psi_\gamma\colon\mbb{R}_{+}\to \mbb{R}$, via 
\begin{equation}
   \label{eq:psi_gamma}
   \psi_\gamma(\rho) =
   \begin{cases}
     \rho\ln\rho, & \mbox{if} \ \gamma=1, \\
     \dfrac{\rho^\gamma}{\gamma-1}, & \mbox{if} \ \gamma>1.
   \end{cases}  
\end{equation}

The internal energy $\mcal{I}_\veps$, potential energy $\mcal{P}_\veps$ and kinetic energy $\mcal{K}_\veps$ of the system \eqref{eq:cons_mas}-\eqref{eq:poisson} are defined by
\begin{equation}
  \mcal{I}_{\veps}(t):=\int_{\Omega}\psi_{\gamma}(\rho^\veps)\dd\uu{x}, \quad 
  \mcal{P}_{\veps}(t):=\frac{\veps^2}{2}\int_{\Omega}\abs{\bgrd\phi^\veps}^2
  \dd\uu{x}, \quad
  \mcal{K}_{\veps}(t):= \int_{\Omega}\frac{1}{2}\rho^\veps{\abs{\uu{u}^\veps}}^2
  \dd\uu{x}. \label{eq:all_engy}
\end{equation}

\subsection{Apriori Energy Estimates}
\label{sec:apr_est}
\begin{proposition}
  \label{prop:engy_balance}
  The regular solutions of \eqref{eq:cons_mas}-\eqref{eq:poisson} satisfy the
  following identities. 
  \begin{itemize}
  \item A renormalisation identity:
    \begin{equation}
      \label{eq:renorm}
        \Dt\psi_\gm(\rho^\veps)
      +\dive\left(\psi_\gm(\rho^\veps)\uu{u}^\veps\right)
      +p^\veps\dive(\uu{u}^\veps)=0.
    \end{equation}    
    \item A positive renormalisation identity:
    \begin{equation}
      \label{eq:porenorm}
      \Dt\Pi_\gm(\rho^\veps)
      +\dive\left(\psi_\gamma(\rho^\veps)-\psi_\gamma^\prime(1)\rho^\veps\right)\uu{u}^\veps+p^\veps\dive\uu{u}^\veps=0,
    \end{equation}
    where
    $\Pi_\gm(\rho):=\psi_\gamma(\rho)-\psi_\gamma(1)-\psi_{\gamma}^{\prime}(1)(\rho-1)$ 
    denotes the relative internal energy, which is an affine approximation of $\psi_\gamma$ with respect to the constant state $\rho=1$.
  \item A kinetic energy identity:
    \begin{equation}
      \label{eq:kinbal}
      \Dt\Big(\frac{1}{2}\rho^\veps{\abs{\uu{u}^\veps}}^2\Big)
      +\dive\Big(\frac{1}{2}\rho^\veps{\abs{\uu{u}^\veps}}^2\uu{u}^\veps\Big)
      +\bgrd p^\veps\cdot\uu{u}^\veps
      =\rho^\veps\uu{u}^\veps\cdot\bgrd\phi^\veps.
    \end{equation}
    \item A potential energy identity:
    \begin{equation}
      \label{eq:potbal}
     \Dt\Big(\frac{\veps^2}{2}\abs{\bgrd\phi^\veps}^{2}\Big)
      -\veps^2\dive(\phi^\veps\Dt(\bgrd\phi^\veps))=\phi^\veps\dive(\rho^\veps\uu{u}^\veps).
    \end{equation}
  \item Adding \eqref{eq:porenorm}, \eqref{eq:kinbal} and \eqref{eq:potbal} we get the total energy or entropy identity:
  \begin{equation}
  \label{eq:eng_id}
  \Dt\Big(\Pi_\gm(\rho^\veps)+\frac{1}{2}\rho^\veps{\abs{\uu{u}^\veps}}^2+\frac{\veps^2}{2}\abs{\bgrd\phi^\veps}^{2}\Big)
      +\dive\Big((\psi_{\gm}(\rho^\veps) -\psi_\gamma^\prime(1)\rho^\veps+ p^\veps + \frac{1}{2}\rho^\veps{\abs{\uu{u}^\veps}}^2-\rho^\veps\phi^\veps)\uu{u}^\veps-\veps^2\phi^\veps\Dt(\bgrd\phi^\veps)\Big)=0.
  \end{equation}
  \end{itemize}
  Taking the integral over the domain $\Omega$, we obtain the following total energy conservation:  
  \begin{equation}
    \label{eq:apr_est_stab}
    \frac{\dd}{\dd t}
    E_{\veps}(t)=\frac{\dd}{\dd t}(\mcal{I}_\veps+\mcal{P}_\veps+\mcal{K}_\veps)(t)=0. 
  \end{equation}
\end{proposition}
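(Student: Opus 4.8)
The plan is to establish the five pointwise identities one at a time, each obtained from the balance laws \eqref{eq:cons_mas}--\eqref{eq:poisson} by the chain rule together with the thermodynamic relation $\rho\psi_\gm'(\rho)-\psi_\gm(\rho)=p(\rho)$, which one verifies directly from the definition \eqref{eq:psi_gamma} in both cases $\gamma=1$ and $\gamma>1$. For the renormalisation identity \eqref{eq:renorm}, I would write $\Dt\psi_\gm(\rho^\veps)=\psi_\gm'(\rho^\veps)\Dt\rho^\veps$ and substitute $\Dt\rho^\veps=-\dive(\rho^\veps\uu{u}^\veps)$ from \eqref{eq:cons_mas}; expanding $\dive(\psi_\gm(\rho^\veps)\uu{u}^\veps)=\psi_\gm'(\rho^\veps)\bgrd\rho^\veps\cdot\uu{u}^\veps+\psi_\gm(\rho^\veps)\dive\uu{u}^\veps$ and adding, the convective derivatives $\psi_\gm'(\rho^\veps)\bgrd\rho^\veps\cdot\uu{u}^\veps$ cancel and the leftover $[\psi_\gm(\rho^\veps)-\rho^\veps\psi_\gm'(\rho^\veps)]\dive\uu{u}^\veps=-p^\veps\dive\uu{u}^\veps$ by the thermodynamic relation. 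The positive renormalisation identity \eqref{eq:porenorm} then follows by subtracting $\psi_\gm'(1)$ times the mass equation from \eqref{eq:renorm}: since $\psi_\gm(1)$ and $\psi_\gm'(1)$ are constants, $\Dt\Pi_\gm(\rho^\veps)=\Dt\psi_\gm(\rho^\veps)-\psi_\gm'(1)\Dt\rho^\veps$, and $-\psi_\gm'(1)\Dt\rho^\veps=\psi_\gm'(1)\dive(\rho^\veps\uu{u}^\veps)$ folds the constant correction into the flux.

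For the kinetic energy identity \eqref{eq:kinbal}, I would first pass \eqref{eq:cons_mom} to non-conservative form: expanding $\Dt(\rho^\veps\uu{u}^\veps)$ and $\bdive(\rho^\veps\uu{u}^\veps\otimes\uu{u}^\veps)$ and using \eqref{eq:cons_mas} to cancel the $\uu{u}^\veps\,[\Dt\rho^\veps+\dive(\rho^\veps\uu{u}^\veps)]$ contribution yields $\rho^\veps\Dt\uu{u}^\veps+\rho^\veps(\uu{u}^\veps\cdot\bgrd)\uu{u}^\veps+\bgrd p^\veps=\rho^\veps\bgrd\phi^\veps$. Dotting with $\uu{u}^\veps$ and using $\uu{u}^\veps\cdot\Dt\uu{u}^\veps=\tfrac12\Dt|\uu{u}^\veps|^2$ and $\uu{u}^\veps\cdot(\uu{u}^\veps\cdot\bgrd)\uu{u}^\veps=\tfrac12\uu{u}^\veps\cdot\bgrd|\uu{u}^\veps|^2$, I would recombine with $\tfrac12|\uu{u}^\veps|^2$ times \eqref{eq:cons_mas} to recognise the conservative form $\Dt(\tfrac12\rho^\veps|\uu{u}^\veps|^2)+\dive(\tfrac12\rho^\veps|\uu{u}^\veps|^2\uu{u}^\veps)$ on the left. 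The potential energy identity \eqref{eq:potbal} is the most mechanical: differentiating $\tfrac{\veps^2}{2}|\bgrd\phi^\veps|^2$ gives $\veps^2\bgrd\phi^\veps\cdot\Dt\bgrd\phi^\veps$, which is exactly the gradient part of $\veps^2\dive(\phi^\veps\Dt\bgrd\phi^\veps)$; the remaining piece is $\veps^2\phi^\veps\Dt(\Delta\phi^\veps)$, and differentiating the Poisson equation \eqref{eq:poisson} in time gives $\veps^2\Dt(\Delta\phi^\veps)=\Dt\rho^\veps=-\dive(\rho^\veps\uu{u}^\veps)$, producing the right-hand side $\phi^\veps\dive(\rho^\veps\uu{u}^\veps)$.

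To obtain the total energy identity \eqref{eq:eng_id} I would add \eqref{eq:porenorm}, \eqref{eq:kinbal} and \eqref{eq:potbal}. The key cancellations are the pressure work, via $p^\veps\dive\uu{u}^\veps+\bgrd p^\veps\cdot\uu{u}^\veps=\dive(p^\veps\uu{u}^\veps)$, and the electrostatic terms, whose two source contributions combine as $\rho^\veps\uu{u}^\veps\cdot\bgrd\phi^\veps+\phi^\veps\dive(\rho^\veps\uu{u}^\veps)=\dive(\rho^\veps\phi^\veps\uu{u}^\veps)$; moving the latter to the left and merging it into the common convective flux leaves precisely the bracketed flux displayed in \eqref{eq:eng_id}.

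Finally, integrating \eqref{eq:eng_id} over $\Omega$ and invoking the divergence theorem, I expect the only nontrivial point to be the vanishing of the boundary flux, which is the step requiring care with the boundary conditions. For periodic boundary conditions it is immediate. Under the conditions \eqref{eq:cont_bdary} with an impermeable wall $\uu{u}^\veps\cdot\uu{\nu}=0$, the entire convective part has no normal contribution, while the normal trace of the remaining term is $-\veps^2\phi^\veps\,\Dt(\bgrd\phi^\veps\cdot\uu{\nu})$, which vanishes on $\Gamma^N$ since $\bgrd\phi^\veps\cdot\uu{\nu}=0$ there; on $\Gamma^D$ one exploits the structure of the prescribed data $\phi^D_\veps$. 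It then remains to identify $\tfrac{\dd}{\dd t}\int_\Omega\Pi_\gm(\rho^\veps)\,\dd\uu{x}$ with $\tfrac{\dd}{\dd t}\mcal{I}_\veps$: because the affine correction $-\psi_\gm(1)-\psi_\gm'(1)(\rho^\veps-1)$ integrates to a quantity constant in time by conservation of total mass $\int_\Omega\rho^\veps\,\dd\uu{x}$ (obtained by integrating \eqref{eq:cons_mas}), its time derivative drops out, yielding $\tfrac{\dd}{\dd t}E_\veps=0$ as in \eqref{eq:apr_est_stab}.
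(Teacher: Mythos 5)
Your proposal is correct and follows essentially the same route as the paper: the renormalisation and kinetic energy identities are obtained by the standard chain-rule manipulations (which the paper simply cites as classical), and your derivation of the potential energy identity is the paper's computation read in the opposite direction, namely multiplying the mass equation by $\phi^\veps$, invoking the time-differentiated Poisson equation, and using $\phi^\veps\Dt(\Delta\phi^\veps)=\dive(\phi^\veps\Dt\bgrd\phi^\veps)-\Dt\big(\tfrac12\abs{\bgrd\phi^\veps}^2\big)$. Your additional remarks on the vanishing of the boundary flux and on why $\tfrac{\dd}{\dd t}\int_\Omega\Pi_\gm(\rho^\veps)\,\dd\uu{x}=\tfrac{\dd}{\dd t}\mcal{I}_\veps$ via conservation of total mass are correct and in fact more careful than the paper, which passes over these points silently.
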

Here, $E_{\veps}\colon\mbb{R} \to \mbb{R}$ denotes the total energy of the system \eqref{eq:cons_mas}-\eqref{eq:poisson} and later, the stability of solutions will be established using the energy conservation \eqref{eq:apr_est_stab}.
\begin{proof}
Proofs of \eqref{eq:renorm} and \eqref{eq:kinbal} are classical and hence omitted; see, e.g.\ \cite{HLS21} and the references therein. In order to prove \eqref{eq:potbal}, we multiply \eqref{eq:cons_mas} by $\phi^\veps$ and using \eqref{eq:poisson} we write
\begin{align*}
0&=\phi^\veps\D_t\rho^\veps+\phi^\veps\dive (\rho^\veps\uu{u}^\veps)\\
&=\veps^2\phi^\veps\D_t(\Delta\phi^\veps)+\phi^\veps\dive (\rho^\veps\uu{u}^\veps),\\
&=-\Dt\Big(\frac{\veps^2}{2}\abs{\bgrd\phi^\veps}^{2}\Big)
      +\veps^2\dive(\phi^\veps\Dt(\bgrd\phi^\veps))+\phi^\veps\dive(\rho^\veps\uu{u}^\veps).
\end{align*}
\end{proof}

\subsection{Quasineutral Limit}
\label{sec:QN_limit}

The quasineutral limit of the EP system \eqref{eq:cons_mas}-\eqref{eq:poisson} is obtained by passing to the limit $\veps\to0$; see, e.g.\ \cite{Deg13} and the references therein. Letting $\veps\to0$ formally yields the following incompressible Euler equations:
\begin{equation}
\label{eq:incom_Euler}
    \Dt \uu{U}+\bdive(\uu{U}\otimes\uu{U})+\bgrd\Phi=\uu{0}, \quad \dive(\uu{U})=0.
\end{equation}

\subsection{Stabilisation}
\label{sec:stab}

The primary goal of this paper is to develop a semi-implicit scheme for the EP system \eqref{eq:ep} and study its stability in the sense of numerical control of total energy. In other words, we aim at obtaining a discrete equivalent of the energy stability \eqref{eq:eng_id}. In order to enforce the energy stability of the numerical solution, we adopt the formalism used in \cite{AGK22,DVB17,DVB20,PV16}. The idea is to apply a stabilisation, therein a shift in the momentum is introduced in the convective fluxes of mass and momentum and a shift in electrostatic potential on the right hand side of the momentum balance which yields the modified system:
\begin{subequations} 
\label{eq:r_EP}
\begin{align}
  \D_t\rho^\veps+\dive (\rho^\veps\uu{u}^\veps-\uu{q}^\veps)&=0, \label{eq:r_cons_mas}\\ 
  \D_t(\rho^\veps\uu{u}^\veps)+\bdive
  (\uu{u}^\veps\otimes(\rho^\veps\uu{u}^\veps-\uu{q}^\veps))+\bgrd
  p(\rho^\veps) &=\rho^\veps\bgrd(\phi^\veps-\Lambda^\veps), \label{eq:r_cons_mom}\\
  \veps^{2}\Delta\phi^\veps&=\rho^\veps-1.\label{eq:r_poisson}
\end{align}
\end{subequations}
The stabilisations $\uu{q}^\veps$ and $\Lambda^\veps$ are to be obtained aposteriori after carrying out an energy stability analysis. Analogous to the original system \eqref{eq:ep}, the stabilised system \eqref{eq:r_EP} satisfies the following energy identities. 
\begin{proposition}
  \label{prop:r_engy_balance}
  The regular solutions of \eqref{eq:r_EP} satisfy the following identities. 
  \begin{itemize}
  \item A renormalisation identity
    \begin{equation}
      \label{eq:r_renorm}
      \Dt\psi(\rho^\veps)
      +\dive\bigg(\psi(\rho^\veps)\Big(\uu{u}^\veps-\frac{\uu{q}^\veps}{\rho^\veps}\Big)\bigg)
      +p(\rho^\veps)\dive\Big(\uu{u}^\veps-\frac{\uu{q}^\veps}{\rho^\veps}\Big)=0.
    \end{equation}
    \item A positive renormalisation identity
    \begin{equation}
      \label{eq:r_po_renorm}
      \Dt\Pi_\gamma(\rho^\veps)
      +\dive\bigg(\Big(\psi_\gamma(\rho^\veps) - \psi^{\prime}_\gamma(1)\rho^\veps\Big)\Big(\uu{u}^\veps-\frac{\uu{q}^\veps}{\rho^\veps}\Big)\bigg)
      +p(\rho^\veps)\dive\Big(\uu{u}^\veps-\frac{\uu{q}^\veps}{\rho^\veps}\Big)=0.
    \end{equation}
  \item A kinetic energy identity
    \begin{align}
      \label{eq:r_kinbal}
      \Dt\Big(\frac{1}{2}\rho^\veps{\abs{\uu{u}^\veps}}^2\Big)
      &+\dive\Big(\frac{1}{2}{\abs{\uu{u}^\veps}}^2(\rho^\veps\uu{u}^\veps-\uu{q}^\veps)\Big)
      +\Big(\uu{u}^\veps-\frac{\uu{q}^\veps}{\rho^\veps}\Big)\cdot\bgrd p(\rho^\veps)\\
      &=\bgrd \phi^\veps\cdot(\rho^\veps\uu{u}^\veps-\uu{q}^\veps)
      +\frac{\uu{q}^\veps}{\rho^\veps}\cdot(\rho^\veps\bgrd\phi^\veps-\bgrd p(\rho^\veps))-\rho^\veps\uu{u}^\veps\cdot\bgrd\Lambda^\veps.\nonumber    \end{align}
  \item A potential energy identity
    \begin{equation}
      \label{eq:r_potbal}
     \Dt\Big(\frac{\veps^2}{2}\abs{\bgrd\phi^\veps}^{2}\Big)
      -\veps^2\dive(\phi^\veps\Dt(\bgrd\phi^\veps))=\phi^\veps\dive(\rho^\veps\uu{u}^\veps-\uu{q}^\veps).
    \end{equation}
  \item Adding \eqref{eq:r_po_renorm}, \eqref{eq:r_kinbal} and \eqref{eq:r_potbal} we get the total energy or entropy identity
  \begin{equation}
      \label{eq:r_eng_id}
  \begin{aligned}
  &\Dt\Big(\Pi_\gm(\rho^\veps)+\frac{1}{2}\rho^\veps{\abs{\uu{u}^\veps}}^2+\frac{\veps^2}{2}\abs{\bgrd\phi^\veps}^{2}\Big)\\
  &+\dive\Big(\big(\psi^\prime(\rho^\veps)-\psi^\prime(1)\rho^\veps+\frac{1}{2}{\abs{\uu{u}^\veps}}^2-\phi^\veps\big)(\rho^\veps\uu{u}^\veps-\uu{q}^\veps)-\veps^2\phi^\veps\Dt(\bgrd\phi^\veps)+\rho^\veps\uu{u}^\veps\Lambda^\veps\Big)\\ 
  &=\frac{\uu{q}^\veps}{\rho^\veps}\cdot(\rho^\veps\bgrd\phi^\veps-\bgrd p(\rho^\veps))+\Lambda^\veps\dive(\rho^\veps\uu{u}^\veps) 
  \end{aligned}
  \end{equation}
  \end{itemize}
  \end{proposition}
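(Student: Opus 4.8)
The plan is to establish the four identities in the listed order and then read off \eqref{eq:r_eng_id} by summation. Throughout, set $\uu{v}^\veps := \uu{u}^\veps - \uu{q}^\veps/\rho^\veps$, so that \eqref{eq:r_cons_mas} becomes the ordinary continuity equation $\Dt\rho^\veps + \dive(\rho^\veps\uu{v}^\veps) = 0$; equivalently $\rho^\veps\uu{v}^\veps = \uu{w}^\veps$, where $\uu{w}^\veps := \rho^\veps\uu{u}^\veps - \uu{q}^\veps$ is the shifted flux appearing in \eqref{eq:r_EP}. With this reading, \eqref{eq:r_renorm} is precisely the classical renormalisation computation behind \eqref{eq:renorm} with $\uu{u}^\veps$ replaced by $\uu{v}^\veps$: multiply the continuity equation by $\psi'(\rho^\veps)$, use $\psi'(\rho^\veps)\Dt\rho^\veps = \Dt\psi(\rho^\veps)$ and $\psi'(\rho^\veps)\dive(\rho^\veps\uu{v}^\veps) = \dive(\psi(\rho^\veps)\uu{v}^\veps) + (\rho^\veps\psi'(\rho^\veps) - \psi(\rho^\veps))\dive\uu{v}^\veps$, and invoke $p(\rho) = \rho\psi'(\rho) - \psi(\rho)$, which one checks directly from \eqref{eq:psi_gamma} for both $\gamma = 1$ and $\gamma > 1$.

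Since $\Pi_\gm$ differs from $\psi_\gm$ only by an affine function of $\rho^\veps$, the positive renormalisation identity \eqref{eq:r_po_renorm} follows from \eqref{eq:r_renorm}: writing $\Dt\Pi_\gm(\rho^\veps) = \Dt\psi_\gm(\rho^\veps) - \psi'_\gm(1)\Dt\rho^\veps$ and replacing $\Dt\rho^\veps$ by $-\dive(\rho^\veps\uu{v}^\veps)$, the constant $\psi'_\gm(1)$ is absorbed into the divergence, producing the flux $(\psi_\gm(\rho^\veps) - \psi'_\gm(1)\rho^\veps)\uu{v}^\veps$. The potential energy identity \eqref{eq:r_potbal} is obtained exactly as \eqref{eq:potbal} in Proposition~\ref{prop:engy_balance}: multiply \eqref{eq:r_cons_mas} by $\phi^\veps$, substitute $\Dt\rho^\veps = \veps^2\Dt(\Delta\phi^\veps)$ from \eqref{eq:r_poisson}, and rewrite $\veps^2\phi^\veps\dive\Dt(\bgrd\phi^\veps) = \veps^2\dive(\phi^\veps\Dt(\bgrd\phi^\veps)) - \Dt(\tfrac{\veps^2}{2}\abs{\bgrd\phi^\veps}^2)$; the only change from the unstabilised case is that the surviving source is $\phi^\veps\dive(\rho^\veps\uu{u}^\veps - \uu{q}^\veps)$.

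The kinetic energy identity \eqref{eq:r_kinbal} is where I expect the main difficulty, owing to the bookkeeping of the $\uu{q}^\veps$ and $\Lambda^\veps$ contributions. Taking the scalar product of \eqref{eq:r_cons_mom} with $\uu{u}^\veps$, the time derivative gives $\uu{u}^\veps\cdot\Dt(\rho^\veps\uu{u}^\veps) = \Dt(\tfrac12\rho^\veps\abs{\uu{u}^\veps}^2) + \tfrac12\abs{\uu{u}^\veps}^2\Dt\rho^\veps$, while the convective term gives $\uu{u}^\veps\cdot\bdive(\uu{u}^\veps\otimes\uu{w}^\veps) = \dive(\tfrac12\abs{\uu{u}^\veps}^2\uu{w}^\veps) + \tfrac12\abs{\uu{u}^\veps}^2\dive\uu{w}^\veps$, and the two remainder terms $\tfrac12\abs{\uu{u}^\veps}^2(\Dt\rho^\veps + \dive\uu{w}^\veps)$ cancel by \eqref{eq:r_cons_mas}. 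It then remains to reorganise the pressure and potential terms: splitting $\uu{u}^\veps\cdot\bgrd p = (\uu{u}^\veps - \uu{q}^\veps/\rho^\veps)\cdot\bgrd p + (\uu{q}^\veps/\rho^\veps)\cdot\bgrd p$, writing $\rho^\veps\uu{u}^\veps\cdot\bgrd\phi^\veps = \bgrd\phi^\veps\cdot\uu{w}^\veps + \uu{q}^\veps\cdot\bgrd\phi^\veps$ (as $\rho^\veps\uu{u}^\veps = \uu{w}^\veps + \uu{q}^\veps$), and combining $\uu{q}^\veps\cdot\bgrd\phi^\veps - (\uu{q}^\veps/\rho^\veps)\cdot\bgrd p = (\uu{q}^\veps/\rho^\veps)\cdot(\rho^\veps\bgrd\phi^\veps - \bgrd p)$, one recovers exactly the right-hand side of \eqref{eq:r_kinbal}. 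This particular splitting is chosen aposteriori to make the summation below telescope cleanly.

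Finally, adding \eqref{eq:r_po_renorm}, \eqref{eq:r_kinbal} and \eqref{eq:r_potbal} and collecting divergences yields \eqref{eq:r_eng_id}. The non-conservative pressure terms merge via $p\dive\uu{v}^\veps + \uu{v}^\veps\cdot\bgrd p = \dive(p\uu{v}^\veps)$, and the resulting scalar coefficient of $\uu{w}^\veps$ simplifies through $\psi_\gm(\rho^\veps) + p(\rho^\veps) = \rho^\veps\psi'_\gm(\rho^\veps)$ to $\psi'_\gm(\rho^\veps) - \psi'_\gm(1) + \tfrac12\abs{\uu{u}^\veps}^2 - \phi^\veps$, after absorbing $\bgrd\phi^\veps\cdot\uu{w}^\veps + \phi^\veps\dive\uu{w}^\veps = \dive(\phi^\veps\uu{w}^\veps)$; the potential-flux term $-\veps^2\phi^\veps\Dt(\bgrd\phi^\veps)$ is inherited from \eqref{eq:r_potbal}. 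The shift contribution is recast as $-\rho^\veps\uu{u}^\veps\cdot\bgrd\Lambda^\veps = -\dive(\rho^\veps\uu{u}^\veps\Lambda^\veps) + \Lambda^\veps\dive(\rho^\veps\uu{u}^\veps)$, producing the flux term $\rho^\veps\uu{u}^\veps\Lambda^\veps$ and leaving the source $\Lambda^\veps\dive(\rho^\veps\uu{u}^\veps)$. The only genuinely non-conservative remainders are then $(\uu{q}^\veps/\rho^\veps)\cdot(\rho^\veps\bgrd\phi^\veps - \bgrd p)$ from \eqref{eq:r_kinbal} and $\Lambda^\veps\dive(\rho^\veps\uu{u}^\veps)$, which are precisely the two terms retained on the right of \eqref{eq:r_eng_id}.
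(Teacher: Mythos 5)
Your proposal is correct and follows essentially the same route as the paper, which simply declares the proof to be identical in structure to that of Proposition~\ref{prop:engy_balance} (multiply the mass balance by $\psi_\gamma'(\rho^\veps)$, resp.\ $\phi^\veps$, dot the momentum balance with $\uu{u}^\veps$, and use the product rule together with $p(\rho)=\rho\psi_\gamma'(\rho)-\psi_\gamma(\rho)$); your write-up merely supplies the bookkeeping the paper omits. One remark: the flux coefficient you obtain, $\psi_\gamma'(\rho^\veps)-\psi_\gamma'(1)+\frac{1}{2}\abs{\uu{u}^\veps}^2-\phi^\veps$ multiplying $\rho^\veps\uu{u}^\veps-\uu{q}^\veps$, is the correct one --- the term $\psi^\prime(1)\rho^\veps$ printed in \eqref{eq:r_eng_id} appears to be a typo for $\psi^\prime(1)$, since $\big(\psi_\gamma(\rho^\veps)-\psi_\gamma'(1)\rho^\veps+p(\rho^\veps)\big)/\rho^\veps=\psi_\gamma'(\rho^\veps)-\psi_\gamma'(1)$.
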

  \begin{proof}
  The proof can be done in the exactly same way as in Proposition~\ref{prop:engy_balance}.
  \end{proof}
Thus, at least at the continuous level, we immediately see that if we take $\uu{q}^\veps=\eta\rho^\veps(\bgrd p(\rho^\veps)-\rho^\veps\bgrd\phi^\veps)$ with $\eta > 0$ and $\Lambda^\veps=-\alpha\dive(\rho^\veps\uu{u}^\veps)$ with $\alpha > 0$ then we can have the decay of total energy. Guided by the above result, we introduce a discretisation of the stabilised EP system \eqref{eq:r_EP} so as to obtain a discrete equivalent of the stability.

\section{Discretisation of the Domain with MAC Grid and Discrete Differential Operators}
\label{sec:MAC_disc_diff}

In order to approximate the Euler-Poisson system \eqref{eq:r_EP} in a finite volume framework, we assume that the computational space-domain $\Omega\subseteq \mbb{R}^d$ such that the closure of $\Omega$ is a union of closed rectangles ($d=2$) or closed orthogonal parallelepipeds ($d=3$).

\subsection{Mesh and Unknowns}
\label{subsec:msh_unkn}
In this subsection, we introduce a discretisation approach for the domain $\Omega$, employing a marker and cell (MAC) grid along with associated discrete function spaces; see \cite{Cia91, GHL+18, GHM+16, HW65} for more details. A MAC grid is a pair $\mcal{T}=(\mcal{M},\mcal{E})$, where $\mcal{M}$ is called the primal mesh which is a partition of $\bar{\Omega}$ consisting of possibly non-uniform closed rectangles ($d=2$) or parallelepipeds ($d=3$) and $\mcal{E}$ is the collection of all edges of the primal mesh cells. For each $\sigma\in\mcal{E}$, we construct a dual cell $\Ds$ which is the union of half-portions of the primal cells $K$ and $L$, where $\s=\bar{K}\cap\bar{L}$. Furthermore, we decompose $\E$ as $\mcal{E}=\cup_{i=1}^d\mcal{E}^{(i)}$, where $\mcal{E}^{(i)}=\mcal{E}_\intr^{(i)}\cup\mcal{E}_\extr^{(i)}$. Here, $\mcal{E}_\intr^{(i)}$ and $\mcal{E}_\extr^{(i)}$ are, respectively, the collection of $d-1$ dimensional internal and external edges that are orthogonal to the $i$-th unit vector $e^{(i)}$ of the canonical basis of $\mbb{R}^d$. We denote by $\mcal{E}(K)$, the collection of all edges of $K\in\mcal{M}$ and $\tilde{\mcal{E}}(D_\sigma)$, the collection of all edges of the dual cell $D_\sigma$. Also, we denote by $\E^D$ the collection of all external edges $\s\in\cup_{i=1}^d\E^{(i)}_\extr$ such that $\s\subset\Gamma^D$.

Now, we define a discrete function space $L_{\mcal{M}}(\Omega) \subset L^{\infty}(\Omega)$, consisting of scalar valued functions which are piecewise constant on each primal cell $K\in\mcal{M}$. Analogously, we denote by $\uu{H}_{\mcal{E}}(\Omega)=\prod_{i=1}^{d} H^{(i)}_{\mcal{E}}(\Omega)$, the set of vector valued (in $\mbb{R}^d$) functions which are constant on each dual cell $D_\sigma$ and for each $i=1,2,\dots,d$. The space of vector valued functions vanishing on the external edges is denoted as  $\uu{H}_{\mcal{E},0}(\Omega)=\prod_{i=1}^d H^{(i)}_{\mcal{E},0}(\Omega)$, where  $H^{(i)}_{\mcal{E},0}(\Omega)$ contains those elements of $H^{(i)}_{\mcal{E}}(\Omega)$ which vanish on the external edges. For a primal grid function $q\in L_{\mcal{M}}(\Omega)$ such that $q =\sum_{K\in\mcal{M}}q_K\chark$, and for each $\sigma = K|L \in\cup_{i=1}^d\mcal{E}^{(i)}_\intr$, the dual average $q_{D_\sigma}$ of $q$ over $D_\sigma$ is defined via the relation
  \begin{equation}
   \label{eq:mass_dual}
     \abs{D_\sigma}q_{D_\sigma}=\abs{D_{\sigma,K}}q_K+\abs{D_{\sigma,L}}q_L.
  \end{equation}

\subsection{Discrete Convection Fluxes and Differential Operators}
\label{sec:dic_convect}

In this section we introduce the discrete convection fluxes and discrete differential operators on the functional spaces described above. As a first step, for each $\sigma=K|L\in\mcal{E}^{(i)}_{\mathrm{int}}$, $i=1,2,\dots,d$, we assert an interface value $\rho_\sigma=\rho_{KL}$ of $\rho$ on $\sigma=K|L$, whose existence is guaranteed by the following lemma, see \cite[Lemma 2]{GH+21}. 
\begin{lemma}
    \label{lem:rho_sig}
    Let $\psi$ be a strictly convex and continuously differentiable function over an open interval $I$ of $\mathbb{R}$. Let $\rho_K, \rho_L\in I$. Then there exists unique $\rho_{KL}\in \llbracket \rho_K, 
    \rho_L\rrbracket$ such that
    \begin{equation}
    \psi(\rho_K) + \psi^\prime(\rho_K)(\rho_{KL}-\rho_L) = \psi(\rho_L) + \psi^\prime(\rho_L)(\rho_{KL}-\rho_L),\; \mbox{if}\; \rho_K\neq\rho_L,
    \end{equation}
    and $\rho_{KL}=\rho_K=\rho_L$ otherwise. In particular for $\psi=p=\rho^\gamma$, for each $K,L\in\mcal{M}$ there exists a unique $\rho_{KL}\in \llbracket \rho_K, \rho_L\rrbracket$ such that 
    \begin{equation}
    \label{eq:rhosigchoice}
    \begin{aligned}
    \rho_K^\gamma - \rho_L^\gamma &= \rho_{KL}[\psi^\prime_\gamma(\rho_K)-\psi^\prime_\gamma(\rho_L)], \
    \mbox{if} \ \rho_K\neq\rho_L,\\
    \rho_{KL}=\rho_K &=\rho_L, \ \mbox{otherwise}.
    \end{aligned}
    \end{equation}
    \end{lemma}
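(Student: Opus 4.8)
The case $\rho_K=\rho_L$ is disposed of immediately by the stated convention $\rho_{KL}=\rho_K=\rho_L$, so I would assume $\rho_K\neq\rho_L$ throughout and, without loss of generality, $\rho_K<\rho_L$. The plan is to read the defining relation geometrically: it asserts that the two tangent lines to $\psi$, at $\rho_K$ and at $\rho_L$, take the same value at the abscissa $\rho_{KL}$. Accordingly I would introduce the affine function
\[
  g(x):=\big[\psi(\rho_K)+\psi'(\rho_K)(x-\rho_K)\big]-\big[\psi(\rho_L)+\psi'(\rho_L)(x-\rho_L)\big],
\]
so that the sought interface value is precisely a zero of $g$. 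Its slope equals $\psi'(\rho_K)-\psi'(\rho_L)$, and since $\psi$ is strictly convex, $\psi'$ is strictly increasing; hence this slope is nonzero whenever $\rho_K\neq\rho_L$. Thus $g$ is genuinely affine and has a unique root, which settles existence and uniqueness of $\rho_{KL}$ on all of $\mathbb{R}$.

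The only step requiring care is to localise this root inside $\llbracket\rho_K,\rho_L\rrbracket$, and here I would invoke the first-order characterisation of strict convexity, namely that a tangent line lies strictly below the graph of $\psi$ away from its point of tangency. Evaluating $g$ at the two endpoints, this inequality gives $\psi(\rho_K)>\psi(\rho_L)+\psi'(\rho_L)(\rho_K-\rho_L)$, i.e.\ $g(\rho_K)>0$, and symmetrically $\psi(\rho_L)>\psi(\rho_K)+\psi'(\rho_K)(\rho_L-\rho_K)$, i.e.\ $g(\rho_L)<0$. Since $g$ is affine and changes sign between $\rho_K$ and $\rho_L$, its unique zero lies in the open interval $(\rho_K,\rho_L)\subset\llbracket\rho_K,\rho_L\rrbracket$, as claimed. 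I expect this sign bookkeeping to be the main (if modest) obstacle, as it is the one place where strict convexity is genuinely used; everything else is elementary one-variable reasoning.

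For the specialisation to the barotropic law I would apply the general statement to $\psi=\psi_\gamma$, which is strictly convex and continuously differentiable on $(0,\infty)$ for every $\gamma\geq1$ (directly for $\gamma>1$, and via $\psi_1''(\rho)=1/\rho>0$ when $\gamma=1$). The crux is the Legendre-type identity $\rho\,\psi_\gamma'(\rho)-\psi_\gamma(\rho)=\rho^\gamma=p(\rho)$, which holds in both branches of \eqref{eq:psi_gamma}. Substituting it into $g(\rho_{KL})=0$ collapses the terms $\psi_\gamma(\rho_K)-\rho_K\psi_\gamma'(\rho_K)$ and $\psi_\gamma(\rho_L)-\rho_L\psi_\gamma'(\rho_L)$ into $-p(\rho_K)$ and $-p(\rho_L)$; rearranging the resulting identity yields exactly $\rho_K^\gamma-\rho_L^\gamma=\rho_{KL}[\psi_\gamma'(\rho_K)-\psi_\gamma'(\rho_L)]$, which is \eqref{eq:rhosigchoice}. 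The membership $\rho_{KL}\in\llbracket\rho_K,\rho_L\rrbracket$ is inherited verbatim from the general part, so no separate localisation is needed.
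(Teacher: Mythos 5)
Your argument is correct and complete. Note that the paper itself gives no proof of this lemma: it is quoted from \cite[Lemma 2]{GH+21}, so there is nothing internal to compare against; your tangent-line-intersection argument (unique zero of the affine difference $g$, localised in $(\rho_K,\rho_L)$ by the strict-convexity inequality at the two endpoints, then the Legendre-type identity $\rho\psi_\gamma'(\rho)-\psi_\gamma(\rho)=p(\rho)$ for the barotropic specialisation) is precisely the standard proof of this statement. One small point worth flagging: as printed, the left-hand side of the defining relation reads $\psi(\rho_K)+\psi'(\rho_K)(\rho_{KL}-\rho_L)$, which is a typo for $\psi(\rho_K)+\psi'(\rho_K)(\rho_{KL}-\rho_K)$ --- only with this correction does the relation express the intersection of the two tangent lines and reduce, via the Legendre identity, to \eqref{eq:rhosigchoice}. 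You implicitly (and correctly) worked with the corrected form.
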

    Throughout the paper, $\llbracket a, b\rrbracket$ stands for the interval
$[\min(a, b), \max(a, b)]$, for any real numbers $a$ and $b$. 
\begin{definition}\label{def:disc_conv_flux}
  Assume a discretisation of $\Omega$ with MAC grid and the discrete function spaces as defined above. 
  \begin{itemize}
  \item For each $K \in \mcal{M}$ and $\sigma\in \mcal{E}(K)$, the stabilized mass flux $F_{\sigma,K}$ is defined by
    \begin{equation}
      \label{eq:mass_flux}
      F_{\sigma,K} :=\abs{\sigma}(\rho_{\s}u_{\sigma, K} - Q_{\sigma, K}),
    \end{equation}
    with $(\rho,\uu{u}, \uu{Q})\in L_{\mcal{M}}(\Omega) \times \uu{H}_{\mcal{E},0}(\Omega)\times \uu{H}_{\mcal{E},0}(\Omega)$. Here, $u_{\sigma,K}=u_{\sigma} \uu{e}^{(i)}\cdot\uu{\nu}_{\sigma,K}$ and $Q_{\sigma,K}=Q_{\sigma} \uu{e}^{(i)}\cdot\uu{\nu}_{\sigma, K}$, where ${\nu}_{\sigma, K}$ denotes the unit vector normal to the edge $\sigma=K|L\in\mcal{E}^{(i)}_{\mathrm{int}}$ in the direction outward to the cell $K$. The choice for the stabilization term $\uu{Q}\in\uu{H}_{\mcal{E},0}(\Omega)$ will be determined after an energy stability analysis of the overall finite volume scheme. 
  \item For a fixed $i=1,2,\dots,d$, for each $\sigma\in\mcal{E}^{(i)}, \epsilon\in\bar{\E}_{D_\sigma}$ and $v\in H^{(i)}_{\mcal{E},0}$, the upwind momentum convection flux is given by the expression
    \label{mom_flux_up} 
    \begin{align}
      \sum_{\epsilon\in\Eds}\fesig v_{\epsilon, \mathrm{up}},
    \end{align}
    where $\fesig$ is the mass flux across the edge $\epsilon$ of the dual cell $\Ds$ which is $0$ if $\epsilon\in\mcal{E}_{\mathrm{ext}}$. On the other hand, if $\epsilon = D_\sigma|D_\sigma^{\prime}$, it is a suitable linear combination of the primal mass convection fluxes $F_{\s,K}$ and $F_{\s^\prime,K}$ at the neighbouring edges with constant coefficients; see, e.g.\ \cite{GHL+18} for more details of the above construction. 
\item For any dual face $\epsilon=\Ds|D_{\sigma^{\prime}}$ we have $F_{\epsilon,\sigma}=-F_{\epsilon,\sigma^{\prime}}$. Note that same is true for the primary mass fluxes, i.e.\ for any $\sigma=K|L$, we have $\fsigk=-F_{\sigma^{\prime},K}$.
\item $v_{\epsilon,\mathrm{up}}$ is determined by the following upwind choice:
\label{eq:mom_up}
\begin{equation}
v_{\epsilon,\mathrm{up}}=\begin{cases}
v_{\sigma}, &\fesig\geq 0,\\
v_{\sigma^{\prime}}, &\mathrm{otherwise,}
\end{cases}
\end{equation}
where $\epsilon\in\Eds$, $\epsilon=\Ds|D_{\sigma^{\prime}}$.
\end{itemize}
\end{definition}

\begin{definition}[Discrete gradient and discrete divergence]
\label{def:disc_grad_div}
    The discrete gradient operator  $\bgrd_{\mcal{E}}:L_{\mcal{M}}(\Omega)\rightarrow\uu{H}_{\mcal{E}}(\Omega)$ is defined by the map $q \mapsto \bgrd_{\mcal{E}}q=\Big(\D^{(1)}_{\mcal{E}}q,\D^{(2)}_{\mcal{E}}q,\dots,\D^{(d)}_{\mcal{E}}q\Big)$, where for each $i=1,2,\dots,d$, $\partial^{(i)}_{\mcal {E}}q$ denotes
\begin{equation}
\label{eq:disc_grad}
\partial^{(i)}_{\mcal {E}}q=\sum_{\sigma\in \mcal{E}^{(i)}_\intr}(\partial^{(i)}_{\mcal{E}}q)_{\sigma}\mcal{X}_{D_{\sigma}}, \ \mbox{with} \  (\partial^{(i)}_{\mcal{E}}q)_{\sigma}= \frac{\abs{\sigma}}{\abs{D_\sigma}}(q_{K_\s}-q_K)\uu{e}^{(i)}\cdot \uu{\nu}_{\sigma,K}, \; \s\in\E(K),\;K\in\M.
\end{equation}
The values $q_{K_\s}$ corresponding to the dual edge $\s\in\E(K)$ is given by
\begin{equation}
\label{eq:Ksig_choice}
q_{K_\s}=\begin{cases}
q_L, &\mathrm{if}\;\s\in\cup_{i=1}^d\E^{(i)}_\intr,\;\s=K|L\\
q_K, &\mathrm{if}\;\s\in\cup_{i=1}^d\E^{(i)}_\extr,\;\s\subset\Gamma^N\\
q_\s= \frac{1}{\abs{\s}}\int_{\s}q^D(\uu{x})\dd\uu{\gamma}(\uu{x}),&\mathrm{if}\;\s\in\cup_{i=1}^d\E^{(i)}_\extr,\;\s\subset\Gamma^D,
\end{cases}
\end{equation}
where $q^D$ is the boundary value of $q$ on $\Gamma^D$ and $\uu{\gamma}$ denotes the Lebesgue measure on $\mbb{R}^{d-1}$. Note that if $q\in L_\M(\Omega)$ has a purely Neumann or no-flux boundary condition, the gradient vanishes on the external edges and we have $\bgrd_\E q\in\uu{H}_{\E,0}(\Omega)$.

The discrete divergence operator $\divM:\uu{H}_{\E}(\Omega)\rightarrow L_{\mcal{M}}(\Omega)$ is defined as $\uu{v} \mapsto \divM \uu{v}=\sum_{K\in\M}(\dive_{\mcal{M}} \uu{v})_K \mcal{X}_{K}$, where for each $K\in\M$, $(\dive_{\mcal{M}} \uu{v})_K $ denotes
\label{def:disc_div}
\begin{equation}
\label{eq:disc_div}
(\divM \uu{v})_K =\frac{1}{\abs{K}}\sum_{\sigma\in\mcal{E}(K)}\abs{\sigma} v_{\sigma,K}.
\end{equation}
The above discrete operators satisfy the following `gradient-divergence duality'; see \cite{EG+10,GHL+18} for further details. 
\end{definition}
\begin{proposition}
  For any $(q,\uu{v})\in\Lm\times\Hez$, the gradient-divergence duality is given by
  \begin{equation}
    \label{eq:disc_dual}
    \int_{\Omega}q(\divM \uu{v})\dd\uu{x}+\int_{\Omega}\bgrd_{\mcal{E}}q\cdot\uu{v}\dd\uu{x}=0.
  \end{equation}
\end{proposition}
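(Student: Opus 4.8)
The plan is to prove the identity by discrete summation by parts, exploiting the fact that $q$ is piecewise constant on the primal cells while each component of $\uu{v}$ is piecewise constant on the dual cells. First I would reduce both integrals to finite sums. For the divergence term, since $q=\sum_{K\in\M}q_K\chark$ and $(\divM\uu{v})_K$ is given by \eqref{eq:disc_div}, I obtain
\[
\int_{\Omega}q\,(\divM\uu{v})\dd\uu{x}=\sum_{K\in\M}\volk\,q_K(\divM\uu{v})_K=\sum_{K\in\M}q_K\sum_{\sigma\in\Ek}\abs{\sigma}\,v_{\sigma,K}.
\]
The key move is to reindex this cell-wise double sum as a sum over edges: each internal edge $\sigma=K|L$ is counted once from $K$ and once from $L$, while every external edge contributes nothing because $\uu{v}\in\Hez$ vanishes there. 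Using the orientation relation $v_{\sigma,L}=-v_{\sigma,K}$ (the outward normals of $K$ and $L$ across $\sigma$ are opposite), this yields $\int_{\Omega}q\,(\divM\uu{v})\dd\uu{x}=\sum_{\sigma=K|L\in\Eint}\abs{\sigma}(q_K-q_L)v_{\sigma,K}$.

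For the gradient term, I would expand the dot product componentwise. Since for each $i$ both $\partial^{(i)}_{\E}q$ and the $i$-th component of $\uu{v}$ are constant on the dual cells $D_\sigma$, $\sigma\in\mcal{E}^{(i)}$, and the latter vanishes on external edges, I obtain
\[
\int_{\Omega}\gre q\cdot\uu{v}\dd\uu{x}=\sum_{i=1}^d\sum_{\sigma\in\mcal{E}^{(i)}_\intr}\volds\,(\partial^{(i)}_{\E}q)_\sigma\,v_\sigma.
\]
Substituting the definition \eqref{eq:disc_grad}, the factor $\volds$ cancels, and using $q_{K_\sigma}=q_L$ from \eqref{eq:Ksig_choice} for internal edges together with $v_{\sigma,K}=v_\sigma\,\uu{e}^{(i)}\cdot\uu{\nu}_{\sigma,K}$, this reduces to $\sum_{\sigma=K|L\in\Eint}\abs{\sigma}(q_L-q_K)v_{\sigma,K}$. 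Adding the two expressions, the contribution of every edge cancels termwise and the identity follows.

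The only genuinely delicate step is the edge reindexing of the cell-wise divergence sum: one must verify that each internal edge is visited exactly twice with opposite orientations and that no external-edge term survives, the latter being precisely where the hypothesis $\uu{v}\in\Hez$ enters. The resulting edge-wise expression is independent of the labelling of the two adjacent cells, since swapping $K\leftrightarrow L$ flips the sign of both $q_K-q_L$ and $v_{\sigma,K}$; everything else is routine bookkeeping with the definitions of the discrete operators.
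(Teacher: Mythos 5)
Your argument is correct and is precisely the standard summation-by-parts proof of this identity: reduce both integrals to sums over primal cells and dual cells respectively, reindex the divergence sum over internal edges (using $v_{\sigma,L}=-v_{\sigma,K}$ and the vanishing of $\uu{v}$ on external edges, which is where $\uu{v}\in\Hez$ is needed), and observe the termwise cancellation against the gradient sum. The paper itself does not prove this proposition but delegates it to the cited references, where the argument given is essentially the one you wrote, so there is nothing to reconcile.
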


\label{def:disc_lap}
\begin{definition}[Discrete Laplacian] 
The discrete Laplacian  $\Delta_{\M}:L_{\mcal{M}}(\Omega)\rightarrow L_{\mcal{\M}}(\Omega)$ is defined as $q \mapsto \Delta_{\M}q =\sum_{K\in\M}(\Delta_{\M} q)_K \mcal{X}_{K}$, where for each $K\in\M$, $(\Delta_{\M} q)_K $ denotes
\begin{equation}
(\Delta_{\M} q)_K  = (\dive_{\M}(\bgrd_{\E}{q}))_{K},
\end{equation}
where the operators $\dive_{\M}$ and $\bgrd_{\E}$ are defined in \eqref{eq:disc_div} and \eqref{eq:disc_grad} respectively.
\end{definition}

\section{Energy Stable Semi-implicit Scheme}
\label{sec:SI_scheme}
In the following, we present a linearly implicit in time, fully-discrete in space finite volume scheme for the stabilised Euler-Poisson system \eqref{eq:r_EP}. 

\subsection{The Scheme}
\label{sec:scheme}
Let us consider a discretisation $0=t_0<t_1<\cdots<t_N=T$ of the time interval $(0,T)$ and let $\dt=t_{n+1}-t_n$, for $n=0,1,\dots,N-1$, be the constant timestep. We consider the following fully discrete scheme for $0\leq n\leq{N-1}$:
\begin{subequations}
\label{eq:dis_ep}
\begin{gather}
    \frac{1}{\dt}(\rho_{K}^{n+1}-\rho_{K}^{n})+\frac{1}{\left|K\right|}\sum_{\s\in\E(K)}F_{\sigma,K}^{n}=0, \ \forall K\in \M, \label{eq:dis_cons_mas}\\
    \frac{1}{\dt}(\rho_{\Ds}^{n+1}u_{\sigma}^{n+1}-\rho_{\Ds}^{n}u_{\sigma}^{n})+\frac{1}{\left|\Ds\right|}\sum_{\epsilon\in\bar\E(\Ds)}F_{\epsilon,\sigma}^{n}u_{\epsilon}^{n}+(\partial^{(i)}_{\E}p^n)_{\s}=\rho_{\s}^{n}(\partial^{(i)}_{\E}\phi^{n+1,*})_{\s}, \ \forall\s\in\E_{\mathrm{int}}^{(i)}, \, \forall 1\leq i\leq d,  \label{eq:dis_cons_mom}\\
    \veps^2(\Delta_{\M}\phi^{n+1})_{K}=\rho_{K}^{n+1}-1, \ \forall K\in \M.  \label{eq:dis_poisson}
\end{gather}
\end{subequations}
Here, the stabilised mass flux and the potential are given by 
\begin{align}
\label{eq:stabterms}
F_{\s,K}^n&=\abs{\s}(\rho_{\s}^n{u}_{\s,K}^n-Q_{\s,K}^{n+1}),\\
\phi^{n+1,*}_{K}&=\phi^{n+1}_{K}-\Lambda^{n}_{K},
\end{align}
with the choices of $Q_\s^{n+1}$ and $\Lambda^{n}_{K}$ to be made later. Using the updates \eqref{eq:dis_cons_mas}-\eqref{eq:dis_cons_mom}, the following dual mass update and the velocity update can be easily deduced. 
\begin{gather}
    \frac{1}{\dt}(\rho_{\Ds}^{n+1}-\rho_{\Ds}^{n})+\frac{1}{\left|\Ds\right|}\sum_{\epsilon\in\bar\E(\Ds)}F_{\epsilon,\sigma}^{n}=0,  \label{eq:dis_cons_mass_dual}\\
    \frac{1}{\dt}(u_{\s}^{n+1}-u_{\s}^n)+\frac{1}{|\Ds|}\sum_{\epsilon\in\bar\E(\Ds)}(F_{\epsilon,\s}^n)^{-}\frac{u_{\s^{\prime}}^{n}-u_{\s}^n}{\rho_{\Ds}^{n+1}}=\frac{1}{\rho_{\Ds}^{n+1}}\big(\rho_{\s}^{n}(\partial_{\E}^{(i)}\phi^{n+1,*})_{\s}-(\partial_{\E}^{(i)}p^n)_{\s}\big).  \label{eq:dis_vel_dual}
\end{gather}
Here, $a^{\pm} = \frac{1}{2}(a \pm \abs{a})$ denotes, respectively, the positive and negative parts of a real number $a$.
Finally, we take the initial approximation for $\rho$ and $\uu{u}$ as the average of the initial conditions $\rho_{0}$ and $\uu{u}_{0}$ on primal cells and dual cells respectively, i.e.\ 
\begin{align}
    \label{eq:dis_ic_den}
    \rho_{K}^{0}&=\frac{1}{|K|}\int_{K}\rho_{0}(\uu{x})\dd\uu{x}, \ \forall K\in\M,
    \\
    \label{eq:dis_ic_vel}
    u_{\s,i}^{0}&=\frac{1}{|\Ds|}\int_{\Ds}(\uu{u}_{0}(\uu{x}))_{i}\dd\uu{x}, \ \forall \s\in\E_\intr^{(i)}, \forall \, 1\leq i\leq d. 
\end{align}
The boundary conditions imposed on $\phi$ are discretised on the external interfaces $\s\in\cup_{i=1}^d\E^{(i)}_\extr$, $1\leq i\leq d$ via
\begin{align}
\label{eq:dis_bdary_dir}
    \phi^D_\s &= \frac{1}{\abs{\s}}\int_{\s}\phi^D(\uu{x})\dd\uu{\gamma}(\uu{x}),\;\s\subset\Gamma^D, \\
\label{eq:dis_bdary_neu}
    (\D^{(i)}_\E\phi^n)_\s &= 0,\;\s\subset\Gamma^N,\;0\leq n\leq N.
\end{align}
In all the subsequent calculations we assume that $\phi^D$ is defined on $\overline{\Omega}$ and that $\phi^D\in L^\infty(\Omega)\cap H^1(\Omega)$.

\subsection{Discrete Identities}
\label{sec:id}

The aim of this subsection is to prove apriori energy estimates satisfied by the scheme \eqref{eq:dis_ep} that are discrete counterparts of the stability estimates stated in Proposition~\ref{prop:r_engy_balance}. For the simplicity of exposition, throughout the computations carried out in this subsection, we have assumed that the velocity components vanish on the boundary whereas the potential has a homogeneous Neumann boundary condition. Analogous computations can be carried out when periodic boundary conditions are imposed on all the unknowns. The goal of the apriori energy estimation is ultimately to derive suitable expressions for the stabilisation terms $Q_\s^{n+1}$ and $\Lambda^{n}_{K}$ so that the overall scheme is energy stable. 
\begin{lemma}[Discrete renormalisation identity]
\label{lem:disc_renorm}
Let $\psi_\gm$ be defined as in \eqref{eq:psi_gamma}. Then a solution to the system \eqref{eq:dis_cons_mas}-\eqref{eq:dis_poisson} satisfies the following identity for each $K\in\M$:
\begin{equation}
    \label{eq:dis_renorm}
    \frac{|K|}{\dt}\big(\psi_\gm(\rho_{K}^{n+1})-\psi_\gm(\rho_{K}^{n})\big)+\sum_{\s\in\E(K)}\abs{\s}\psi_\gm(\rho_\s^n)
\Big(u_{\s,K}^n-\frac{Q_{\s,K}^{n+1}}{\rho^n_\s}\Big)+ \abs{K}p^n_K\bigg(\dive_\M\Big(\uu{u}^n-\frac{\uu{Q}^{n+1}}{\rho^n}\Big)\bigg)_K + \mcal{F}_K
    =\mcal{R}_K,
   \end{equation}
with
\begin{subequations}
    \begin{align}
    \label{eq:stab_div}
    &\bigg(\dive_\M\Big(\uu{u}^n-\frac{\uu{Q}^{n+1}}{\rho^n}\Big)\bigg)_K
    = \frac{1}{\abs{K}}\sum_{\s\in\E(K)}\abs{\s} \Big(u_{\s,K}^n-\frac{Q_{\s,K}^{n+1}}{\rho^n_\s}\Big),\\
    \label{eq:renorm_flux}
    &\mcal{F}_K = \sum_{\s\in\E(K)}\abs{\s}\big(\psi^\prime_\gamma(\rho^n_K)(\rho^n_\s - \rho^n_K)+\psi_\gamma(\rho^n_K)-\psi_\gm(\rho_\s^n)\big)\Big(u^n_{\s,K}-\frac{Q_{\s,K}^{n+1}}{\rho^n_\s}\Big),\\
     \label{eq:renorm_id_const_b}
     &\mcal{R}_K =\frac{\abs{K}}{2\dt}(\rho_{K}^{n+1}-\rho_{K}^{n})^2\psi_{\gm}^{\prime\prime}\big(\bar{\rho}^{n+\frac{1}{2}}_K\big),
    \end{align}
\end{subequations}
where $\bar\rho_{K}^{n+\frac{1}{2}}\in \llbracket\rho_{K}^{n+1},\rho_{K}^{n}\rrbracket$.
\end{lemma}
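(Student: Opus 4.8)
The plan is to reproduce, at the discrete level, the continuous argument behind the stabilised renormalisation identity \eqref{eq:r_renorm}, working cellwise and starting from the primal mass update \eqref{eq:dis_cons_mas}. The first step is to dispose of the time-difference term by a second-order Taylor expansion of $\psi_\gm$ about $\rho_K^n$ with exact Lagrange remainder: since $\psi_\gm$ is smooth and strictly convex on $\mbb{R}_+$, there is $\bar\rho_K^{n+\frac12}\in\llbracket\rho_K^{n+1},\rho_K^n\rrbracket$ such that
\[
\psi_\gm(\rho_K^{n+1})-\psi_\gm(\rho_K^n)=\psi_\gm'(\rho_K^n)(\rho_K^{n+1}-\rho_K^n)+\tfrac12\psi_\gm''(\bar\rho_K^{n+\frac12})(\rho_K^{n+1}-\rho_K^n)^2 .
\]
Multiplying by $|K|/\dt$ splits the time term into a linear part and precisely the quadratic remainder $\mcal{R}_K$ of \eqref{eq:renorm_id_const_b}.

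Next I would eliminate the linear part using the mass update \eqref{eq:dis_cons_mas}, which gives $\frac{|K|}{\dt}(\rho_K^{n+1}-\rho_K^n)=-\sum_{\s\in\E(K)}F_{\s,K}^n$, so that the linear contribution becomes $-\psi_\gm'(\rho_K^n)\sum_\s F_{\s,K}^n$. Rewriting the stabilised flux \eqref{eq:stabterms} as $F_{\s,K}^n=|\s|\rho_\s^n w_{\s,K}$ with the abbreviation $w_{\s,K}:=u_{\s,K}^n-Q_{\s,K}^{n+1}/\rho_\s^n$ (which presumes $\rho_\s^n>0$, as in the statement), this reads $-\psi_\gm'(\rho_K^n)\sum_\s|\s|\rho_\s^n w_{\s,K}$.

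The heart of the argument is then a purely algebraic, edge-by-edge cancellation showing that the remaining three left-hand terms collapse to exactly the opposite of this quantity. Using the barotropic identity $p(\rho)=\rho\psi_\gm'(\rho)-\psi_\gm(\rho)$, so that $p_K^n=\rho_K^n\psi_\gm'(\rho_K^n)-\psi_\gm(\rho_K^n)$, and noting via \eqref{eq:stab_div} that the pressure-divergence term equals $p_K^n\sum_\s|\s|w_{\s,K}$, I would add the convective flux $\sum_\s|\s|\psi_\gm(\rho_\s^n)w_{\s,K}$, the pressure-divergence term, and the corrector $\mcal{F}_K$ of \eqref{eq:renorm_flux}. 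The coefficient of $|\s|w_{\s,K}$ becomes $\psi_\gm(\rho_\s^n)+\big[\psi_\gm'(\rho_K^n)(\rho_\s^n-\rho_K^n)+\psi_\gm(\rho_K^n)-\psi_\gm(\rho_\s^n)\big]+\big[\rho_K^n\psi_\gm'(\rho_K^n)-\psi_\gm(\rho_K^n)\big]=\psi_\gm'(\rho_K^n)\rho_\s^n$, so the sum of these three terms is exactly $\psi_\gm'(\rho_K^n)\sum_\s|\s|\rho_\s^n w_{\s,K}$. This cancels the linear contribution from the time term, and only $\mcal{R}_K$ survives, which is \eqref{eq:dis_renorm}.

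I expect no genuine obstacle here, since the identity is purely local to $K$ and involves neither summation-by-parts nor boundary terms; the momentum stabilisation $\uu{Q}^{n+1}$ enters only through the combined velocity $w_{\s,K}$ and needs no special handling. The one point to verify carefully is the bookkeeping in the final cancellation, in particular that $\mcal{F}_K$ is defined exactly so as to absorb the mismatch $\psi_\gm(\rho_\s^n)-\big[\psi_\gm(\rho_K^n)+\psi_\gm'(\rho_K^n)(\rho_\s^n-\rho_K^n)\big]$ between the edge value $\psi_\gm(\rho_\s^n)$ and its affine approximation based at $\rho_K^n$, and that the weight $p_K^n$ is distributed correctly over the edges through \eqref{eq:stab_div}. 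Positivity of the densities, needed both for the Taylor remainder and for the division by $\rho_\s^n$, is assumed throughout.
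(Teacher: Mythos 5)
Your proof is correct and follows essentially the same route as the paper's: multiply the mass update by $\abs{K}\psi_\gm^\prime(\rho_K^n)$, use the second-order Taylor expansion with Lagrange remainder to produce $\mcal{R}_K$, and then rearrange the flux term via the identity $\rho\psi_\gm^\prime(\rho)-\psi_\gm(\rho)=p(\rho)$. The paper states this tersely; your edge-by-edge verification that the coefficient of $\abs{\s}\big(u_{\s,K}^n-Q_{\s,K}^{n+1}/\rho_\s^n\big)$ collapses to $\psi_\gm^\prime(\rho_K^n)\rho_\s^n$ is exactly the ``rearranging'' step the paper leaves implicit.
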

\begin{proof}
We first multiply the mass update \eqref{eq:dis_cons_mas} by $\abs{K}\psi_{\gm}^{\prime}(\rho_{K}^n)$. Upon performing a second order Taylor's series expansion we obtain
\begin{equation}
\label{eq:dis_renorm_Taylor}
   \frac{\abs{K}}{\dt}\big(\psi_\gm(\rho_{K}^{n+1})-\psi_\gm(\rho_{K}^{n})\big) + \psi^\prime_\gm(\rho^n_K)\sum_{\s\in\E(K)}F^n_{\sigma, K}
   = \mcal{R}_K.
\end{equation}
Rearranging the second term on the left hand side of \eqref{eq:dis_renorm_Taylor} and using the relation $\rho\psi^{\prime}_\gamma(\rho) - \psi_\gamma(\rho) = p(\rho),\;\rho>0,$ we obtain \eqref{eq:dis_renorm}.
\end{proof}
\begin{lemma}[Discrete positive renormalisation identity]
Any solution to the system \eqref{eq:dis_cons_mas}-\eqref{eq:dis_poisson} satisfies the following identity:
\begin{equation}
\label{eq:dis_po_renorm}
\begin{aligned}
    \frac{|K|}{\dt}\big(\Pi_\gm(\rho_{K}^{n+1})-\Pi_\gm(\rho_{K}^{n})\big)+\sum_{\s\in\E(K)}\abs{\s}\big(\psi_\gm(\rho_\s^n)-\psi_\gm^\prime(1)\rho_\s^n\big)\Big(\uu{u}_{\s,K}^n-\frac{Q_{\s,K}^{n+1}}{\rho^n_\s}\Big)\\
    +\abs{K}p^n_K\bigg(\dive_\M\Big(\uu{u}^n-\frac{\uu{Q}^{n+1}}{\rho^n}\Big)\bigg)_K+\mcal{F}_K=\mcal{R}_K,
   \end{aligned}
\end{equation}
where $\mcal{F}_{K},\mcal{R}_{K}$ are as in Lemma~\ref{lem:disc_renorm}. 
\end{lemma}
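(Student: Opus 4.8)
The plan is to obtain the positive renormalisation identity \eqref{eq:dis_po_renorm} as a direct corollary of the discrete renormalisation identity \eqref{eq:dis_renorm}, rather than repeating the Taylor-expansion argument of Lemma~\ref{lem:disc_renorm}. The key observation is that the relative internal energy $\Pi_\gm$ is merely an affine perturbation of $\psi_\gm$, namely $\Pi_\gm(\rho)=\psi_\gm(\rho)-\psi_\gm(1)-\psi_\gm'(1)(\rho-1)$. Consequently, passing from the $\psi_\gm$-identity to the $\Pi_\gm$-identity should amount to subtracting a suitable multiple of the discrete mass balance \eqref{eq:dis_cons_mas}, and the difference between the two statements ought to reduce to a multiple of that balance, hence vanish.

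Concretely, I would first multiply the mass update \eqref{eq:dis_cons_mas} by the constant $\abs{K}\psi_\gm'(1)$ to obtain
\begin{equation}
\frac{\abs{K}\psi_\gm'(1)}{\dt}\big(\rho_K^{n+1}-\rho_K^n\big)+\psi_\gm'(1)\sum_{\s\in\E(K)}F_{\s,K}^n=0,
\end{equation}
and subtract this relation from \eqref{eq:dis_renorm}. In the accumulation term, the additive constant in $\Pi_\gm$ cancels in the time difference, so the subtraction converts $\frac{\abs{K}}{\dt}(\psi_\gm(\rho_K^{n+1})-\psi_\gm(\rho_K^n))$ into $\frac{\abs{K}}{\dt}(\Pi_\gm(\rho_K^{n+1})-\Pi_\gm(\rho_K^n))$. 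In the convective term, I would substitute the definition \eqref{eq:mass_flux} of the stabilised mass flux in the factored form $F_{\s,K}^n=\abs{\s}\rho_\s^n(u_{\s,K}^n-Q_{\s,K}^{n+1}/\rho_\s^n)$, so that the subtracted flux sum merges with the existing term $\sum_{\s}\abs{\s}\psi_\gm(\rho_\s^n)(u_{\s,K}^n-Q_{\s,K}^{n+1}/\rho_\s^n)$ to produce exactly the coefficient $\psi_\gm(\rho_\s^n)-\psi_\gm'(1)\rho_\s^n$ demanded by \eqref{eq:dis_po_renorm}. The pressure-divergence term, $\mcal{F}_K$ and $\mcal{R}_K$ are left untouched throughout, which is why the right-hand side and the flux remainders are literally the same as in Lemma~\ref{lem:disc_renorm}.

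The computation is essentially bookkeeping and I do not anticipate a genuine obstacle; the one point that requires care is the internal consistency of the affine shift, which simultaneously modifies the accumulation term (through the linear part of $\Pi_\gm$) and the convective flux (through the factor $\rho_\s^n$). These two modifications are reconciled by one and the same discrete mass balance, so it is essential to apply a single common multiple $\abs{K}\psi_\gm'(1)$ of \eqref{eq:dis_cons_mas}; mismatched multiples would leave a spurious residual. The only structural hypothesis needed is $\rho_\s^n>0$, which legitimises the division by $\rho_\s^n$, and the argument is independent of the eventual choice of the stabilisation $\uu{Q}^{n+1}$.
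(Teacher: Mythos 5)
Your proposal is correct and is exactly the "straightforward calculation from the definition of $\Pi_\gamma$" that the paper invokes: subtracting $\abs{K}\psi_\gamma'(1)$ times the mass update \eqref{eq:dis_cons_mas} from \eqref{eq:dis_renorm} converts the accumulation term to $\Pi_\gamma$ (the constants cancel in the time difference) and, after writing $F^n_{\s,K}=\abs{\s}\rho^n_\s(u^n_{\s,K}-Q^{n+1}_{\s,K}/\rho^n_\s)$, shifts the convective coefficient to $\psi_\gamma(\rho^n_\s)-\psi_\gamma'(1)\rho^n_\s$ while leaving the pressure term, $\mcal{F}_K$ and $\mcal{R}_K$ untouched. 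No gap.
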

\begin{proof}
The proof follows from the definition of $\Pi_\gamma$ and straightforward calculations.
\end{proof}

\begin{lemma}[Discrete kinetic energy identity]
\label{lem:dis_kin_energy_loc}
Any solution to the system \eqref{eq:dis_cons_mas}-\eqref{eq:dis_poisson} satisfies the following identity for $1\leq i\leq d,\; \s\in\E_\intr^{(i)}$ and $0\leq n\leq{N-1}$:
\begin{align}
\label{eq:dis_kinid_loc}
&\frac{\abs{\Ds}}{2\dt}(\rho^{n+1}_{\Ds}(u^{n+1}_\sigma)^2 -\rho^n_{\Ds}(u^n_\sigma)^2) +\sum_{\substack{\epsilon\in\tilde{\E}(\Ds)}}F^n_{\epsilon, \sigma}\frac{(u^n_{\epsilon,\mathrm{up}})^2}{2}+\abs{\Ds}\Big(u^n_\sigma-\frac{Q_\s^{n+1}}{\rho_\s^n}\Big)(\partial^{(i)}_\E p^n)_\sigma\\& = \abs{\Ds}(\rho_\s^n u^n_\sigma-Q_\s^{n+1})(\partial^{(i)}_\E \phi^{n+1})_\sigma+\abs{\Ds}\frac{Q_\s^{n+1}}{\rho_\s^n}\big(\rho_\s^n(\partial^{(i)}_\E \phi^{n+1})_\s-(\partial^{(i)}_\E p^{n})_\s\big)\nonumber
\\
&-\abs{\Ds}\rho_\s^n u_\s^n(\partial^{(i)}_\E \Lambda^{n})_\s+\mcal{R}_{\s},\nonumber
\end{align}
where
\begin{align}
\label{eq:dis_kinloc_rem}
\mcal{R}_{\s} =\frac{\abs{\Ds}}{2\dt}\rho^{n+1}_{\Ds}\abs{u^{n+1}_\sigma - u^n_\sigma}^2
+\frac{1}{2}\sum_{\substack{\epsilon\in\tilde{\E}(\Ds)\\ \epsilon = \Ds|D_\s^{\prime}}}(F^n_{\epsilon, \sigma})^{-}(u^n_{\s^\prime} - u^n_\s)^2.
\end{align}
\end{lemma}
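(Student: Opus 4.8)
The plan is to derive \eqref{eq:dis_kinid_loc} by testing the dual momentum update \eqref{eq:dis_cons_mom} against the \emph{old} velocity, i.e.\ by multiplying that update by $\abs{\Ds}u_\s^n$, and then recasting the three resulting contributions (discrete time derivative, convection, pressure/source) into kinetic-energy form with the help of the dual mass balance \eqref{eq:dis_cons_mass_dual}. This is the discrete analogue of the computation behind \eqref{eq:r_kinbal}. Choosing $u_\s^n$ rather than $u_\s^{n+1}$ as the multiplier is precisely what generates the $\rho_{\Ds}^{n+1}\abs{u_\s^{n+1}-u_\s^n}^2$ remainder appearing in \eqref{eq:dis_kinloc_rem}.

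First I would treat the discrete time-derivative term. Writing $a=\rho_{\Ds}^n$, $b=\rho_{\Ds}^{n+1}$, $v=u_\s^n$, $w=u_\s^{n+1}$, the elementary scalar identity
\[
v(bw-av)=\tfrac12\big(bw^2-av^2\big)-\tfrac12 b(w-v)^2+\tfrac12(b-a)v^2
\]
turns $\tfrac{\abs{\Ds}}{\dt}u_\s^n(\rho_{\Ds}^{n+1}u_\s^{n+1}-\rho_{\Ds}^n u_\s^n)$ into the discrete time derivative of $\tfrac12\rho_{\Ds}(u_\s)^2$, the remainder $-\tfrac{\abs{\Ds}}{2\dt}\rho_{\Ds}^{n+1}\abs{u_\s^{n+1}-u_\s^n}^2$, and a leftover $\tfrac{\abs{\Ds}}{2\dt}(\rho_{\Ds}^{n+1}-\rho_{\Ds}^n)(u_\s^n)^2$; the last term I rewrite, via the dual mass update \eqref{eq:dis_cons_mass_dual}, as $-\tfrac{(u_\s^n)^2}{2}\sum_{\epsilon}F_{\epsilon,\s}^n$.

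Next comes the convection term $u_\s^n\sum_\epsilon F_{\epsilon,\s}^n u_{\epsilon,\mathrm{up}}^n$, which is the crux. Using $u_\s^n u_{\epsilon,\mathrm{up}}^n-\tfrac12(u_{\epsilon,\mathrm{up}}^n)^2=\tfrac12(u_\s^n)^2-\tfrac12(u_\s^n-u_{\epsilon,\mathrm{up}}^n)^2$ together with the upwind rule of Definition~\ref{def:disc_conv_flux} (so that $u_\s^n-u_{\epsilon,\mathrm{up}}^n$ is nonzero only when $F_{\epsilon,\s}^n<0$, where it equals $u_\s^n-u_{\s'}^n$), each dual edge contributes $F_{\epsilon,\s}^n\tfrac{(u_{\epsilon,\mathrm{up}}^n)^2}{2}+F_{\epsilon,\s}^n\tfrac{(u_\s^n)^2}{2}-(F_{\epsilon,\s}^n)^-\tfrac{(u_{\s'}^n-u_\s^n)^2}{2}$. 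Summing over $\epsilon\in\tilde{\E}(\Ds)$ then produces the kinetic-energy convection flux $\sum_\epsilon F_{\epsilon,\s}^n\tfrac{(u_{\epsilon,\mathrm{up}}^n)^2}{2}$, the dissipative remainder $-\tfrac12\sum_{\epsilon=\Ds|D_{\s'}}(F_{\epsilon,\s}^n)^-(u_{\s'}^n-u_\s^n)^2$, and a second leftover $+\tfrac{(u_\s^n)^2}{2}\sum_\epsilon F_{\epsilon,\s}^n$. The key simplification is that the two leftovers coming from the time and convection terms cancel identically, after which the two negative remainders assemble into exactly $-\mcal{R}_\s$ with $\mcal{R}_\s$ as in \eqref{eq:dis_kinloc_rem}.

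Finally, the multiplication leaves the bare pressure term $\abs{\Ds}u_\s^n(\partial_\E^{(i)}p^n)_\s$ and the source $\abs{\Ds}\rho_\s^n u_\s^n(\partial_\E^{(i)}\phi^{n+1,*})_\s$ with $\phi^{n+1,*}=\phi^{n+1}-\Lambda^n$ from \eqref{eq:stabterms}. To cast these in the stated form, patterned on \eqref{eq:r_kinbal}, I would add and subtract $\abs{\Ds}\tfrac{Q_\s^{n+1}}{\rho_\s^n}(\partial_\E^{(i)}p^n)_\s$: moving it across the equation turns the pressure term into $\abs{\Ds}(u_\s^n-\tfrac{Q_\s^{n+1}}{\rho_\s^n})(\partial_\E^{(i)}p^n)_\s$, while splitting the source through $\rho_\s^n u_\s^n=(\rho_\s^n u_\s^n-Q_\s^{n+1})+Q_\s^{n+1}$ and recombining $Q_\s^{n+1}(\partial_\E^{(i)}\phi^{n+1})_\s-\tfrac{Q_\s^{n+1}}{\rho_\s^n}(\partial_\E^{(i)}p^n)_\s$ into $\tfrac{Q_\s^{n+1}}{\rho_\s^n}\big(\rho_\s^n(\partial_\E^{(i)}\phi^{n+1})_\s-(\partial_\E^{(i)}p^n)_\s\big)$ reproduces the three source contributions on the right-hand side of \eqref{eq:dis_kinid_loc}. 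I expect the only delicate point to be the upwind convection bookkeeping, namely tracking signs through $a^{\pm}$ and verifying the cancellation of the two $\tfrac{(u_\s^n)^2}{2}\sum_\epsilon F_{\epsilon,\s}^n$ leftovers; the remainder of the argument is substitution and the single scalar identity displayed above.
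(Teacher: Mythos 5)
Your proposal is correct and follows exactly the paper's (one-line) proof: multiply the momentum update \eqref{eq:dis_cons_mom} by $\abs{\Ds}u^n_\sigma$ and invoke the dual mass balance \eqref{eq:dis_cons_mass_dual}, with the standard scalar identities producing the two quadratic remainder terms in $\mcal{R}_\s$ and the add-and-subtract of $\abs{\Ds}\tfrac{Q_\s^{n+1}}{\rho_\s^n}(\partial^{(i)}_\E p^n)_\s$ yielding the stated grouping of the pressure and source terms. The cancellation of the two $\tfrac{(u^n_\s)^2}{2}\sum_\epsilon F^n_{\epsilon,\s}$ leftovers and the upwind bookkeeping are carried out correctly.
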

\begin{proof}
Multiplying the momentum balance equation \eqref{eq:dis_cons_mom} with $\abs{\Ds}u^n_\sigma$ and using the dual mass balance \eqref{eq:dis_cons_mass_dual} we readily obtain \eqref{eq:dis_kinid_loc}.
\end{proof}

\begin{lemma}[Discrete potential energy identity]
A solution to the system \eqref{eq:dis_cons_mas}-\eqref{eq:dis_poisson} satisfies the following equality for $1\leq i\leq d,\s\in\E_{int}^{(i)}$ and $0\leq n\leq{N-1}\colon$
\begin{equation}
\label{eq:dis_potbal}
\begin{aligned}    
    \frac{\veps^2}{4\dt}\sum_{i=1}^{d}\sum_{\s\in\E^{(i)}(K)}\abs{\Ds}\left(\abs{(\partial^{(i)}_\E\phi^{n+1})_{\s}}^2-\abs{(\partial^{(i)}_\E\phi^{n})_{\s}}^2\right)+\mcal{\bar{R}}_K\\-\veps^2\sum_{i=1}^{d}\sum_{\substack{\s\in\E^{(i)}(K)\\\s=K|L}}\abs{\s}\frac{\phi_K^{n+1}+\phi_L^{n+1}}{2}(\partial^{(i)}_\E(\phi^{n+1}-\phi^n))_{\s,K}
    =\phi_{K}^{n+1}\sum_{\s\in\E(K)}\abs{\s}(\rho_{\s}^{n}u_{\s,K}^{n}-Q_{\s,K}^{n+1}).
\end{aligned}
\end{equation}
Here, the non-negative remainder term $\mcal{\bar{R}}_K$ is given by
\begin{equation}
   \mcal{\bar{R}}_K=\frac{\veps^2}{4\dt}\sum_{i=1}^d\sum_{\s\in\E^{(i)}(K)}\abs{\Ds}\abs{(\partial^{(i)}_\E\phi^{n+1})_{\s}-(\partial^{(i)}_\E\phi^{n})_{\s}}^2.
\end{equation}
\end{lemma}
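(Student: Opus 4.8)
The plan is to mirror the continuous proof of \eqref{eq:r_potbal}: test the discrete mass balance with the updated potential, convert the density increment into a discrete Laplacian increment through the Poisson equation, and then carry out a discrete ``integration by parts in space and time'' that splits off the potential energy difference together with a nonnegative remainder. Concretely, first I would multiply the discrete mass update \eqref{eq:dis_cons_mas} by $\abs{K}\phi_K^{n+1}$, obtaining $\frac{\abs{K}}{\dt}\phi_K^{n+1}(\rho_K^{n+1}-\rho_K^n)+\phi_K^{n+1}\sum_{\s\in\E(K)}F_{\s,K}^n=0$. By the definition \eqref{eq:stabterms} of the stabilised mass flux, the second sum is exactly the right-hand side $\phi_K^{n+1}\sum_{\s\in\E(K)}\abs{\s}(\rho_\s^n u_{\s,K}^n-Q_{\s,K}^{n+1})$ of the claimed identity, so the task reduces to rewriting $-\frac{\abs{K}}{\dt}\phi_K^{n+1}(\rho_K^{n+1}-\rho_K^n)$ as the potential energy terms.

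Next I would subtract the discrete Poisson equation \eqref{eq:dis_poisson} at levels $n$ and $n+1$ to get $\rho_K^{n+1}-\rho_K^n=\veps^2(\Delta_\M(\phi^{n+1}-\phi^n))_K$, so that the quantity above becomes $-\frac{\veps^2}{\dt}\phi_K^{n+1}\abs{K}(\Delta_\M\psi)_K$ with $\psi:=\phi^{n+1}-\phi^n$. Unfolding the discrete Laplacian as $\divM\bgrd_\E$ via \eqref{eq:disc_div} and \eqref{eq:disc_grad} gives $\abs{K}(\Delta_\M\psi)_K=\sum_{i=1}^d\sum_{\s\in\E^{(i)}(K)}\abs{\s}(\partial^{(i)}_\E\psi)_{\s,K}$, and under the homogeneous Neumann condition \eqref{eq:dis_bdary_neu} the discrete gradient vanishes on the external edges, so only the interior edges $\s=K|L$ contribute.

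The heart of the argument is a cell-local summation by parts performed edge by edge. On each interior edge $\s=K|L$ I would split $\phi_K^{n+1}=\frac{\phi_K^{n+1}+\phi_L^{n+1}}{2}-\frac12(\phi_L^{n+1}-\phi_K^{n+1})$. The averaged part reproduces, after the factor $-\frac{\veps^2}{\dt}$, the flux-like term $\sum_{\s=K|L}\abs{\s}\frac{\phi_K^{n+1}+\phi_L^{n+1}}{2}(\partial^{(i)}_\E\psi)_{\s,K}$ of the statement. For the jump part I would express the primal difference through the weighted discrete gradient, using that the two orientation factors $\uu{e}^{(i)}\cdot\uu{\nu}_{\s,K}$ coming from the gradient and the divergence combine to $+1$, and then apply the elementary identity $a(a-b)=\frac12(a^2-b^2)+\frac12(a-b)^2$ with $a=(\partial^{(i)}_\E\phi^{n+1})_\s$ and $b=(\partial^{(i)}_\E\phi^n)_\s$. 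Weighted by $\abs{\Ds}$, the $\frac12(a^2-b^2)$ piece assembles the potential energy difference $\frac{\veps^2}{4\dt}\sum_{i,\s}\abs{\Ds}(\abs{(\partial^{(i)}_\E\phi^{n+1})_\s}^2-\abs{(\partial^{(i)}_\E\phi^n)_\s}^2)$, while the $\frac12(a-b)^2$ piece gives precisely the nonnegative remainder $\bar{\mcal{R}}_K$. Collecting the three contributions yields the asserted equality.

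The main obstacle I anticipate is purely organisational: tracking the orientation sign $\uu{e}^{(i)}\cdot\uu{\nu}_{\s,K}$ and the geometric weights $\abs{\s}/\abs{\Ds}$ when passing between the primal jumps $\phi_L^{n+1}-\phi_K^{n+1}$ and the normalised discrete gradients $(\partial^{(i)}_\E\phi)_\s$, so that each squared term emerges with the correct $\abs{\Ds}$ weight and the cross terms cancel cleanly. I also note that the flux-like term in the statement should carry the factor $\frac{\veps^2}{\dt}$ rather than $\veps^2$, consistent with the $\D_t\bgrd\phi$ structure of the continuous identity \eqref{eq:r_potbal}; this affects neither the nonnegativity of $\bar{\mcal{R}}_K$ nor the ensuing energy estimate. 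Once the algebra is centred on $a(a-b)=\frac12(a^2-b^2)+\frac12(a-b)^2$, the remaining manipulations are routine.
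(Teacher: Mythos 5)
Your proof is correct and follows essentially the same route as the paper's (one-line) argument: multiply the mass update by $\abs{K}\phi_K^{n+1}$, convert the density increment via the Poisson equation at the two time levels, perform the local summation by parts with the average/jump splitting, and close with the square-completion identity $a(a-b)=\tfrac12(a^2-b^2)+\tfrac12(a-b)^2$. Your remark that the flux-like term in the stated identity should carry the factor $\veps^2/\dt$ rather than $\veps^2$ is also well taken; since that term is locally conservative and is dropped when summing over $K\in\M$, this does not affect the global estimate in Theorem~\ref{lem:dis_tot_energy}.
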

\begin{proof}
The result follows after multiplying the discrete mass equation by $|K|\phi_{K}^{n+1}$ and using the relation $(a-b)b=\frac{a^2}{2}-\frac{b^2}{2}-\frac{(a-b)^2}{2}$, for any two real numbers $a$ and $b$.
\end{proof}
\begin{theorem}[Total energy estimate]
\label{lem:dis_tot_energy}
A solution to the system \eqref{eq:dis_cons_mas}-\eqref{eq:dis_poisson} satisfies the global entropy inequality
\begin{align}
\label{eq:dis_totbal}
  \sum_{K\in\M}\frac{\abs{K}}{\dt}\big(\Pi_\gamma(\rho_{K}^{n+1})-\Pi_\gamma(\rho_{K}^{n})\big)&+\sum_{i=1}^{d}\sum_{\s\in\E_{int}^{(i)}}\frac{1}{2}\frac{|\Ds|}{\dt}\left(\rho_{\Ds}^{n+1}(u_{\sigma}^{n+1})^2-\rho_{\Ds}^{n}(u_{\sigma}^{n})\right)^2\\
  &+\frac{\veps^2}{2}\sum_{i=1}^d\sum_{\s\in\E^{(i)}_{int}}\frac{\abs{\Ds}}{\dt}\left(|(\partial^{(i)}_\E\phi^{n+1})_{\s}|^2-|(\partial^{(i)}_\E\phi^{n})_{\s}|^2\right) \leq 0,\nonumber
\end{align}
under the following conditions. 
\begin{enumerate}[(i)]
    \item A CFL restriction on the time-step:
    \begin{equation}
        \label{eq:cfl}
        \dt\leq\min\Big\{\frac{\rho_{\Ds}^{n+1}\abs{\Ds}}{4\sum_{\epsilon\in\bar\E(\Ds)}(-(F^n_{\epsilon,\s})^{-})}, \frac{1}{4}\Big(\frac{\rho^{n+1}_{\Ds}}{a^n_\sigma}\Big)^{\half}\Big\},
    \end{equation}
    with 
    \begin{equation}
    \label{eq:eqn_ansigma}
        a^n_\sigma = \frac{2C}{\Delta_\sigma}\frac{\abs{\sigma}}{\abs{\Ds}}(\rho^n_\sigma)^2,
    \end{equation}
    where $\Delta_\s$ is given by $\frac{1}{\Delta_\s}=\frac{1}{2}\big(\frac{|\D K|}{|K|}+\frac{|\D L|}{|L|}\big)$ for $\s=K|L$, and the constant $C>0$ being an upper bound of $\psi_\gamma^{\prime\prime}$.
    \item The following choices for the stabilisation terms $\uu{Q}^{n+1}$ and $\Lambda^{n}$:
    \begin{align}
        \label{eq:dis_Q}
        Q_{\s}^{n+1} &=\eta_\s\dt\rho_\s^n\big((\partial^{(i)}_{\E}p^n)_{\s}-\rho_{\s}^n(\partial^{(i)}_{\E}\phi^{n+1})_{\s}\big),\;\forall\s\in\E^{(i)}_\intr,\; 1\leq i \leq d, \\
        \label{eq:dis_Lambda}
        \Lambda_K^n &=-\alpha\dt\frac{C}{|K|}\sum_{\s\in\Ek}|\s|\rho_\s^n u_{\s,K}^n,\;\forall K\in\M,
    \end{align} 
\end{enumerate}
with $\eta_\s\geq\frac{1}{\rho_{\Ds}^{n+1}}$ for each $\s\in\E^{(i)}_\intr, \ 1\leq i \leq d$ and $\alpha\geq2$.
\end{theorem}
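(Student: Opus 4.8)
The plan is to reproduce, at the discrete level, the continuous derivation of the total energy identity \eqref{eq:r_eng_id}: I sum the discrete positive renormalisation identity \eqref{eq:dis_po_renorm} over all $K\in\M$, the discrete kinetic energy identity \eqref{eq:dis_kinid_loc} over all $\s\in\E_\intr^{(i)}$ and $1\le i\le d$, and the discrete potential energy identity \eqref{eq:dis_potbal} over all $K\in\M$, and then add the three resulting global balances. The three time-difference terms combine exactly into the left-hand side of \eqref{eq:dis_totbal}, so the entire task reduces to showing that everything else sums to a non-positive quantity.

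The first batch of work is to dispose of the conservative and coupling terms. Every genuinely conservative flux telescopes to zero upon summation over the mesh: the internal-energy flux and the dual convective flux $\sum_\epsilon F^n_{\epsilon,\s}(u^n_{\epsilon,\mathrm{up}})^2/2$ cancel in pairs through the conservativity relations $F^n_{\s,K}=-F^n_{\s',K}$ and $F^n_{\epsilon,\s}=-F^n_{\epsilon,\s'}$, while the boundary contributions vanish because $\uu{u}^n,\uu{Q}^{n+1}\in\Hez$. The pressure-work term $\abs{K}p^n_K(\divM(\uu{u}^n-\uu{Q}^{n+1}/\rho^n))_K$ from the renormalisation balance and the term $\abs{\Ds}(u^n_\s-Q^{n+1}_\s/\rho^n_\s)(\partial^{(i)}_\E p^n)_\s$ from the kinetic balance cancel against each other by the gradient-divergence duality \eqref{eq:disc_dual}; likewise, the electric-work term $\abs{\Ds}(\rho^n_\s u^n_\s-Q^{n+1}_\s)(\partial^{(i)}_\E\phi^{n+1})_\s$ in the kinetic balance cancels the coupling term $\phi^{n+1}_K\sum_\s\abs{\s}(\rho^n_\s u^n_{\s,K}-Q^{n+1}_{\s,K})$ produced by the potential balance, again by \eqref{eq:disc_dual}. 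The auxiliary potential term involving $\tfrac12(\phi^{n+1}_K+\phi^{n+1}_L)(\partial^{(i)}_\E(\phi^{n+1}-\phi^n))_{\s,K}$ telescopes to zero since the discrete gradient changes sign across $\s=K|L$. The one genuinely non-conservative internal-energy contribution is $\sum_K\mcal{F}_K$; here the crucial observation is that the interface density $\rho_\s=\rho_{KL}$ fixed by \eqref{eq:rhosigchoice} forces the Bregman-type weight $\psi_\gm(\rho^n_\s)-\psi_\gm(\rho^n_K)-\psi'_\gm(\rho^n_K)(\rho^n_\s-\rho^n_K)$ to take the same value from either side of $\s$, so that, after pairing the two neighbouring cells and using $F^n_{\s,K}=-F^n_{\s',K}$, one gets $\sum_K\mcal{F}_K=0$. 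This is precisely what the choice \eqref{eq:rhosigchoice} is designed to achieve.

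After these cancellations only the stabilisation-induced source terms and the quadratic remainders survive. Inserting the choice \eqref{eq:dis_Q} into $\sum_\s\abs{\Ds}\tfrac{Q^{n+1}_\s}{\rho^n_\s}(\rho^n_\s(\partial^{(i)}_\E\phi^{n+1})_\s-(\partial^{(i)}_\E p^n)_\s)$ turns it into $-\sum_\s\eta_\s\dt\abs{\Ds}\big((\partial^{(i)}_\E p^n)_\s-\rho^n_\s(\partial^{(i)}_\E\phi^{n+1})_\s\big)^2\le 0$, while the potential-shift term $-\sum_\s\abs{\Ds}\rho^n_\s u^n_\s(\partial^{(i)}_\E\Lambda^n)_\s$ becomes, by \eqref{eq:disc_dual} and \eqref{eq:dis_Lambda}, equal to $\sum_K\abs{K}\Lambda^n_K(\divM(\rho^n\uu{u}^n))_K=-\alpha\dt C\sum_K\abs{K}\big((\divM(\rho^n\uu{u}^n))_K\big)^2\le 0$. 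Thus the surviving sources are manifestly dissipative, and the non-negative potential remainder $\sum_K\mcal{\bar{R}}_K$ already sits on the favourable side of the inequality.

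The main obstacle is controlling the two positive implicit remainders against this available dissipation, and this is exactly where the CFL condition \eqref{eq:cfl} enters. For the density remainder I would bound $\mcal{R}_K\le\tfrac{C\dt}{2\abs{K}}\big(\sum_{\s\in\E(K)}F^n_{\s,K}\big)^2$ using $\psi''_\gm\le C$ and the mass update \eqref{eq:dis_cons_mas}, and then absorb it into the $\Lambda$-dissipation above; splitting $\sum_\s F^n_{\s,K}$ into its $\rho^n\uu{u}^n$ and $\uu{Q}^{n+1}$ contributions shows that $\alpha\ge 2$ leaves enough room. For the velocity remainder $\tfrac{\abs{\Ds}}{2\dt}\rho^{n+1}_\Ds\abs{u^{n+1}_\s-u^n_\s}^2$ I would substitute the velocity update \eqref{eq:dis_vel_dual}, expand the square, and apply Young's inequality to separate the convective contribution from the pressure-electric-potential source contribution. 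The convective part is dominated by the negative term $\tfrac12\sum_\epsilon(F^n_{\epsilon,\s})^-(u^n_{\s'}-u^n_\s)^2$ in $\mcal{R}_\s$ precisely under the first bound in \eqref{eq:cfl}, whereas the source part is absorbed by the $Q$- and $\Lambda$-dissipation, which forces the conditions $\eta_\s\ge 1/\rho^{n+1}_\Ds$ and, through the definition \eqref{eq:eqn_ansigma} of $a^n_\s$ together with the constant $C$, the second bound in \eqref{eq:cfl}. Balancing these quadratic cross-terms so that all constants close simultaneously is the delicate step; summing the resulting cell- and edge-wise inequalities over the mesh then yields \eqref{eq:dis_totbal}.
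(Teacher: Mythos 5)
Your proposal follows essentially the same route as the paper's proof: summing the three local identities, cancelling the conservative, pressure-work and electric-work terms via the gradient--divergence duality \eqref{eq:disc_dual}, killing $\sum_K\mcal{F}_K$ through the interface density of Lemma~\ref{lem:rho_sig}, and then absorbing the quadratic remainders $\mcal{R}_K$ and $\mcal{R}_\s$ into the $\uu{Q}$- and $\Lambda$-dissipation under the two CFL bounds and the constraints $\eta_\s\geq 1/\rho^{n+1}_{\Ds}$, $\alpha\geq 2$ (which the paper closes by requiring the quadratics $m^n_\s\eta_\s^2-\eta_\s+1/\rho^{n+1}_{\Ds}\leq 0$ and $q^n_K\alpha^2-\alpha+1\leq 0$). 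The only cosmetic difference is that you argue $\sum_K\mcal{F}_K=0$ directly from the equal Bregman weights on the two sides of each face together with $F_{\s,K}=-F_{\s,L}$, whereas the paper rearranges and invokes \eqref{eq:rhosigchoice} with the duality; these are equivalent uses of the same lemma.
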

\begin{proof}
We take sum over $K\in\M$ in \eqref{eq:dis_po_renorm}. Since the second term on the left hand side of \eqref{eq:dis_po_renorm} is locally conservative, it doesn't contribute when summation over $K$ is carried out. Summing over $K\in\M$, the third term on the left hand side of \eqref{eq:dis_po_renorm} yields
\begin{equation}
\label{eq:dis_po_renorm_fk}
    \begin{aligned}
        \sum_{K\in\M}\mcal{F}_K &=\sum_{K\in\M}\sum_{\s\in\E(K)}\abs{\s}\big(\psi^\prime_\gamma(\rho^n_K)(\rho^n_\s - \rho^n_K)+\psi_\gamma(\rho^n_K)\big)\Big(u^n_{\s,K}-\frac{Q_{\s,K}^{n+1}}{\rho^n_\s}\Big) \\
        &=\sum_{K\in\M}\psi^{\prime}_\gamma(\rho^n_K)\sum_{\sigma\in\E(K)}\abs{\s}\rho^n_\s\Big(u^n_{\s,K}-\frac{Q_{\s,K}^{n+1}}{\rho^n_\s}\Big)\\
        &+\sum_{K\in\M}\big(\psi_\gamma(\rho^n_K) - \rho^n_K\psi^{\prime}_\gamma(\rho^n_K)\big)\sum_{\s\in\E(K)}\abs{\s}\Big(u^n_{\s,K}-\frac{Q_{\s,K}^{n+1}}{\rho^n_\s}\Big).
    \end{aligned}
\end{equation}
Rearranging the summands in the first term and using the identity $ \rho\psi^{\prime}_\gamma(\rho)-\psi_\gamma(\rho) = p(\rho)$ in the second term on the right hand side of $\eqref{eq:dis_po_renorm_fk}$, we further obtain
\begin{equation}
\label{eq:dis_po_flx}
    \begin{aligned}
        \sum_{K\in\M}\mcal{F}_{K} &=-\sum_{i=1}^d\sum_{\s\in\E^{(i)}_\intr}\abs{\Ds}\rho^n_\s\Big(u^n_{\s}-\frac{Q_{\s}^{n+1}}{\rho^n_\s}\Big)\big(\D^{(i)}_\E\psi^\prime_\gamma(\rho^n)\big)_\s\\
        &-\sum_{K\in\M}\abs{K}p^n_K\bigg(\dive_\M\Big(\uu{u}^n-\frac{\uu{Q}^{n+1}}{\rho^n}\Big)\bigg)_K.
    \end{aligned}
\end{equation}
The choice for $\rho^n_\s$ on each internal interface $\s=K|L$ given by Lemma~\ref{lem:rho_sig} and the discrete div-grad duality \eqref{eq:disc_dual} implies that the first term on the right hand side of \eqref{eq:dis_po_flx} is zero. Thus, the summation over all $K\in\M$ in \eqref{eq:dis_po_renorm} finally gives the identity
\begin{equation}
\begin{aligned}
\label{eq:po_renorm_glob}
    \sum_{K\in\M}\frac{\abs{K}}{\dt}\big(\Pi_\gm(\rho_{K}^{n+1})-\Pi_\gm(\rho_{K}^{n})\big)+\sum_{K\in\M}\abs{K}p^n_K\bigg(\dive_\M\Big(\uu{u}^n-\frac{Q^{n+1}}{\rho^n}\Big)\bigg)_K=\sum_{K\in\M}\mcal{R}_K.
   \end{aligned}
\end{equation}
In order to estimate the remainder term on the right hand side, we employ the mass balance \eqref{eq:dis_cons_mas} and the elementary inequality $(a+b)^2\leq 2a^2+2b^2$ and write
\begin{equation}
\label{eq:po_renorm_rmn}
\mcal{R}_K\leq C\bigg[\frac{\dt}{\abs{K}}\Big(\sum_{\s\in\E(K)}|\s|\rho^n_{\s}u^n_{\s,K}\Big)^2+\frac{\dt}{\abs{K}}\Big(\sum_{\s\in\E(K)}\abs{\s}\Q_{\s,K}^{n+1}\Big)^2\bigg], \;\forall K\in\M,
\end{equation}
where $C>0$ is an upper bound of $\psi^{\prime\prime}_\gamma$. We further estimate the second term on the right hand side of \eqref{eq:po_renorm_rmn} using the Jensen's inequality and splitting the resulting term as a sum of symmetric and anti-symmetric parts, so that for each $K\in\M$,
\begin{equation}
\label{eq:sym_split}
\begin{aligned}
    \frac{1}{\abs{K}}\Big(\sum_{\s\in\E(K)}\abs{\s}\Q_{\s,K}^{n+1}\Big)^2
    &\leq \frac{\abs{\D K}}{\abs{K}}\sum_{\s\in\E(K)}\abs{\s}(\Q_\s^{n+1})^2 \\
    &=\half\sum_{\substack{\s\in\E(K)\\\s=K|L}}\abs{\s}\Big(\frac{\abs{\D K}}{\abs{K}}+\frac{\abs{\D L}}{\abs{L}}\Big)(\Q_\s^{n+1})^2+\half\sum_{\substack{\s\in\E(K)\\\s=K|L}}\abs{\s}\Big(\frac{\abs{\D K}}{\abs{K}}-\frac{\abs{\D L}}{\abs{L}}\Big)(\Q_\s^{n+1})^2.
\end{aligned}
\end{equation}
Using the estimates \eqref{eq:po_renorm_rmn}-\eqref{eq:sym_split} in \eqref{eq:po_renorm_glob}, dropping the anti-symmetric terms and rearranging the summands leads to
\begin{equation}
\label{eq:dis_po_renorm_glob}
    \sum_{K\in\M}\frac{|K|}{\dt}\big(\Pi_\gm(\rho_{K}^{n+1})-\Pi_\gm(\rho_{K}^{n})\big)+\sum_{K\in\M}\abs{K}p^n_K(\dive_\M\uu{u}^n)_K\leq\mcal{A}+\mcal{R}+\mcal{R}^{\prime},
   \end{equation}
where
\begin{subequations}
 \begin{align}
 \mcal{A}&=\dt\sum_{K\in\M}\frac{C}{|K|}\left(\sum_{\s\in\E(K)}|\s|\rho^n_{\s} u^n_{\s,K}\right)^2,\\
 \mcal{R}&=\dt\sum_{K\in\M}\sum_{\s\in\E(K)}\frac{C}{\Delta_{\s}}\abs{\s}(\Q_{\s,K}^{n+1})^2,\\
 \mcal{R}^{\prime}&=-\sum_{i=1}^{d}\sum_{\s\in\E^{(i)}}\abs{\Ds}\Q_{\s}^{n+1}(\partial^{(i)}_{\E}\psi_{\gm}^{\prime})^{n}_{\s},
\end{align}
\end{subequations}
with $\frac{1}{\Delta_\s} = \half\Big(\frac{\abs{\D K}}{\abs{K}}+\frac{\abs{\D L}}{\abs{L}}\Big)$. Calculations analogous to the above, performed on a collocated grid, can be found in \cite{CDV17}.

Next, taking sum over all $\s\in\E^{(i)}_\intr,~i=1,2,\dots,d$, in the discrete kinetic energy identity \eqref{eq:dis_kinid_loc}, dropping the locally conservative term on the dual mesh and rearranging the summands gives
\begin{equation}
\begin{aligned}
\label{eq:sum_kinid_glob}
    &\sum_{\s\in\E_{\intr}}\frac{\abs{\Ds}}{2\dt}(\rho^{n+1}_{\Ds}(u^{n+1}_\sigma)^2 -\rho^n_{\Ds}(u^n_\sigma)^2)+\sum_{i=1}^{d}\sum_{\s\in\E_{\intr}^{(i)}}\abs{\Ds}u^n_\sigma(\partial^{(i)}_\E p^n)_\sigma \\
    &= \sum_{i=1}^{d}\sum_{\s\in\E_{int}^{(i)}}\abs{\Ds}\rho_\s^n u^n_\sigma(\partial^{(i)}_\E \phi^{n+1})_\sigma -\sum_{i=1}^{d}\sum_{\s\in\E_{int}^{(i)}}\abs{\Ds}\rho_\s^n u^n_\sigma(\partial^{(i)}_\E \Lambda^n)_\sigma +\sum_{i=1}^{d}\sum_{\s\in\E_{\intr}^{(i)}}\mcal{R}_{\s}.
\end{aligned}    
\end{equation}
In order to estimate the third term on the right hand side of \eqref{eq:sum_kinid_glob}, we square the velocity update \eqref{eq:dis_vel_dual} and use the inequality $(a+b)^2 \leq 2a^2 + 2b^2$, so that for each $\s\in\E^{(i)}_\intr$, $1\leq i\leq d$,
\begin{equation}
\begin{aligned}
\label{eq:dis_kinloc_rhs_rem}
\half\rho^{n+1}_{\Ds}\absq{u^{n+1}_\s - u^n_\s} &\leq \frac{\dt^2}{\rho^{n+1}_{\Ds}}((\partial^{(i)}_\E p^n)_\s - \rho^n_\s(\partial^{(i)}_\E\phi^{n+1})_\s)^2\\
 &+2\frac{\dt^2}{\rho^{n+1}_{\Ds}}\Big(\frac{1}{\abs{\Ds}}\sum_{\epsilon\in\E(\Ds)}(F^n_{\epsilon,\sigma})^{-}(u^n_{\s^{\prime}} - u^n_\s)\Big)^2+2\dt^2\frac{(\rho^n_\s)^2}{\rho^{n+1}_{\Ds}}(\partial^{(i)}_\E\Lambda^n)_\s^2.
\end{aligned}
\end{equation}
Estimating the remainder term $\mcal{R}_{\s}$, cf.\ \eqref{eq:dis_kinloc_rem}, using \eqref{eq:dis_kinloc_rhs_rem} and applying the Cauchy-Schwartz inequality to the second term on the right hand side of \eqref{eq:dis_kinloc_rhs_rem} analogously as done in \cite[Lemma 3.1]{DVB17}, the equation \eqref{eq:sum_kinid_glob} finally yields
\begin{equation}
\begin{aligned}
\label{eq:dis_kinid_est}
 &\sum_{\s\in\E_{int}}\frac{|\Ds|}{2\dt}\left(\rho_{\Ds}^{n+1}(u_{\sigma}^{n+1})^2-\rho_{\Ds}^{n}(u_{\sigma}^{n})^2\right)+\sum_{i=1}^{d}\sum_{\s\in\E_{int}^{(i)}}|\Ds|u_{\s}^{n}(\partial^{(i)}_{\mcal{E}}p^n)_{\sigma}\leq \sum_{i=1}^{d}\sum_{\s\in\E_{int}^{(i)}}|\Ds|\rho_{\s}^{n}u_{\s}^{n}(\partial^{(i)}_{\mcal{E}}\phi^{n+1})_{\s}\\
&+\sum_{i=1}^d\sum_{\s\in\E^{(i)}_\intr}\bigg(\half + \frac{2\dt}{\rho^{n+1}_{\Ds}\abs{\Ds}}(F^n_{\epsilon,\s})^{-}\bigg)\sum_{\epsilon\in\tilde{\E}(\Ds)}(F^n_{\epsilon,\s})^{-}(u^n_{\s^{\prime}} - u^n_\s)^2+\tilde{\mcal{A}}+\tilde{\mcal{R}}+\mcal{A}^{\prime}.
\end{aligned}
\end{equation}
Here,
\begin{subequations}
\begin{align}
\mcal{A}^{\prime}&=4\dt\sum_{K\in\M}(\Lambda_K^n)^2\sum_{\s\in\Ek}\frac{|\s|^2}{|\Ds|}\frac{(\rho_\s^n)^2}{\rho_{\Ds}^{n+1}},\\
\tilde{\mcal{R}}&=\dt\sum_{i=1}^{d}\sum_{\s\in\E_{int}^{(i)}}|\frac{|\Ds|}{\rho_{\Ds}^{n+1}}\left((\partial^{(i)}_{\E}p^n)_{\s}-\rho_{\s}^n(\partial^{(i)}_{\E}\phi^{n+1})_{\s}\right)^2,\\
\tilde{\mcal{A}}&=\sum_{K\in\M}\Lambda_K^n\sum_{\s\in\Ek}|\s|\rho_{\s}^{n}u_{\s,K}^{n}.
\end{align}
\end{subequations}

Next, taking sum over all $K\in\M$ in the potential balance \eqref{eq:dis_potbal}, dropping the locally conservative third term on the right hand side and using the discrete div-grad duality \eqref{eq:disc_dual} yields
\begin{equation}
\label{eq:dis_potbal_glob}
\begin{aligned}    
    \frac{\veps^2}{2\dt}\sum_{i=1}^{d}\sum_{\s\in\E^{(i)}}\abs{\Ds}\left(\abs{(\partial^{(i)}_\E\phi^{n+1})_{\s}}^2-\abs{(\partial^{(i)}_\E\phi^{n})_{\s}}^2\right)&+\sum_{K\in\M}\sum_{\s\in\E(K)}|\s|\rho_{\s}^{n}u_{\s,K}^{n}\phi_{K}^{n+1}\\
 &\leq\sum_{i=1}^{d}\sum_{\s\in\E^{(i)}}|\Ds|\Q_{\s}^{n+1}(\D^{(i)}_{\E}\phi^{n+1})_{\s}.
\end{aligned}
\end{equation}
The above inequality stems from the identity \eqref{eq:dis_potbal}, owing to the fact that $\mcal{\bar{R}}_K\geq 0$ for each $K\in\M$. 

Finally, adding \eqref{eq:dis_po_renorm_glob}, \eqref{eq:dis_kinid_est} and \eqref{eq:dis_potbal_glob} and using the discrete div-grad duality \eqref{eq:disc_dual} we obtain the inequality
\begin{equation}
\label{eq:dis_tot_est}
\begin{aligned}
    &\sum_{K\in\M}\frac{\abs{K}}{\dt}\big(\Pi_\gamma(\rho_{K}^{n+1})-\Pi_\gamma(\rho_{K}^{n})\big)+\sum_{i=1}^{d}\sum_{\s\in\E_{int}^{(i)}}\frac{1}{2}\frac{|\Ds|}{\dt}\left(\rho_{\Ds}^{n+1}(u_{\sigma}^{n+1})^2-\rho_{\Ds}^{n}(u_{\sigma}^{n})^2\right)\\
    &+\frac{\veps^2}{2}\sum_{i=1}^d\sum_{\s\in\E^{(i)}_\intr}\frac{\abs{\Ds}}{\dt}\left(|(\partial^{(i)}_\E\phi^{n+1})_{\s}|^2-|(\partial^{(i)}_\E\phi^{n})_{\s}|^2\right)\leq \hat{\mcal{A}} + \hat{\mcal{R}} + \mcal{S},
\end{aligned}
\end{equation}
where
\begin{align}
     \hat{\mcal{A}} &= C\dt\sum_{K\in\M}\frac{1}{|K|}\bigg(\sum_{\s\in\Ek}|\s|\rho_{\s}^{n}u_{\s,K}^n\bigg)^2\big(q^n_K\alpha^2-\alpha+1\big),\\
    \hat{\mcal{R}} &= \dt\sum_{i=1}^{d}\sum_{\s\in\E_{int}^{(i)}}|\Ds|\left((\partial_{\E}^{(i)}p^n)_{\s}-\rho_\s^n(\partial_{\E}^{(i)}\phi^{n+1})_{\s}\right)^2\bigg(m_\s^n\eta_\s^2-\eta_\s+\frac{1}{\rho_{\Ds}^{n+1}}\bigg), \\
     \mcal{S} &= \sum_{i=1}^d\sum_{\s\in\E^{(i)}_\intr}\bigg(\half + \frac{2\dt}{\rho^{n+1}_{\Ds}\abs{\Ds}}(F^n_{\epsilon,\s})^{-}\bigg)\sum_{\epsilon\in\E(\Ds)}(F^n_{\epsilon,\s})^{-}(u^n_{\s^{\prime}} - u^n_\s)^2,
\end{align}
provided the stabilisation terms  $\uu{Q}^{n+1}$ and $\Lambda^{n}$ are chosen as in \eqref{eq:dis_Q} and \eqref{eq:dis_Lambda}, respectively. Here we introduce the notations $m_\s^n=\dt^2 a^n_\sigma$, for each $\s\in\E^{(i)}_\intr$, $1\leq i\leq d$, with $a^n_\sigma$ given by \eqref{eq:eqn_ansigma} and $q^n_K=4\dt^2\frac{C}{\abs{K}}\sum_{\s\in\Ek}\frac{\absq{\s}}{\abs{\Ds}}\frac{(\rho_\s^n)^2}{\rho_{\Ds}^{n+1}}$, for each $K\in\M$. In order to obtain the required entropy inequality \eqref{eq:dis_totbal}, the following conditions must hold in \eqref{eq:dis_tot_est}:
\begin{align}
    \label{eq:tstep_ke}
    \half + \frac{2\dt}{\rho^{n+1}_{\Ds}\abs{\Ds}}(F^n_{\epsilon,\s})^{-} &\geq 0,\;\forall\s\in\E^{(i)}_\intr,\; 1\leq i\leq d,
    \\
    \label{eq:Q_quad}
    m_\s^n\eta_\s^2-\eta_\s+\frac{1}{\rho_{\Ds}^{n+1}} &\leq 0,\;\forall\s\in\E^{(i)}_\intr,\; 1\leq i\leq d,
    \\
    \label{eq:Lambda_quad}
    q^n_K\alpha^2-\alpha+1 &\leq 0,\;\forall K\in\M.
\end{align}
Note that time timestep restriction 
\begin{equation}
\label{eq:tstep_ke_suff}
    \dt\leq \frac{\rho_{\Ds}^{n+1}\abs{\Ds}}{4\sum_{\epsilon\in\bar\E(\Ds)}(-(F^n_{\epsilon,\s})^{-})}
\end{equation}
ensures the inequality \eqref{eq:tstep_ke}, whereas the timestep restriction
\begin{equation}
\label{eq:tstep_quad_suff}
    \dt\leq\frac{1}{4}\Big(\frac{\rho^{n+1}_{\Ds}}{a^n_\sigma}\Big)^{\half}
\end{equation}
yields the existence of real solutions for $\eta_\s$ and $\alpha$ in the quadratic expressions \eqref{eq:Q_quad} and \eqref{eq:Lambda_quad}, respectively. Furthermore, the timestep restriction \eqref{eq:tstep_quad_suff} along with the choices $\eta_\sigma \geq \frac{1}{\rho^{n+1}_{\Ds}}$, for each $\s\in\E^{(i)}_\intr$, $1\leq i\leq d$, and $\alpha = 2$ clearly shows that the inequalities \eqref{eq:Q_quad} and \eqref{eq:Lambda_quad} are satisfied.  
\end{proof}
\begin{remark}
  The choice for $\eta_\sigma$ in (ii) in the above theorem is implicit in nature. However, we observe that the following implicit time-step restriction 
  \begin{equation}
    \dfrac{\dt}{|\Ds|}\sum_{\epsilon\in\tilde\E(\Ds)}
    \frac{\abs{F^n_{\epsilon,\s}}}{\rho_{\Ds}^{n+1}}\leq\frac{1}{4}
    \label{eq:suftime}  
  \end{equation}
  gives a sufficient condition which, along with the time-step restriction \eqref{eq:tstep_quad_suff}, yields the required CFL condition \eqref{eq:cfl}. From \eqref{eq:suftime} and the dual mass balance
  \eqref{eq:dis_cons_mass_dual}, we deduce that
  \begin{equation}
    \frac{5}{4}\rho_{\Ds}^{n+1} - \rho_{\Ds}^n \geq  \rho_{\Ds}^{n+1}
    - \rho_{\Ds}^n +
    \frac{\dt}{\abs{\Ds}}\sum_{\epsilon\in\tilde\E(\Ds)}\abs{F^n_{\epsilon,\s}}\geq
    0. 
  \end{equation}
  Hence, we get
  \begin{equation}
    \label{eq:rho_5/4}
    \frac{\rho_{\Ds}^n }{\rho_{\Ds}^{n+1}}\leq \frac{5}{4}.
  \end{equation}
  Therefore, at each interface $\sigma$, choosing $\eta_\sigma$ such that
  \begin{equation}
  \label{eq:eta_expl}
    \eta_\sigma=\eta/\rho_{\Ds}^n  \mathrm{with}\;\eta>\frac{5}{4},
  \end{equation}
  will guarantee the condition (i) and hence the stability of the scheme. In other
  words, the value of $\eta_\sigma$ can be obtained explicitly. Analogous considerations can also be found in \cite{AGK22, DVB20}.   
\end{remark}

\subsection{Existence of a Numerical Solution}
\label{sec:exist_soln}

Note that in view of the expression for $\uu{Q}^{n+1}$ from Theorem~\ref{lem:dis_tot_energy}, the mass update \eqref{eq:dis_cons_mas} and the Poisson equation \eqref{eq:dis_poisson} are coupled. In practice, we replace the linearly implicit density term $\rho^{n+1}$ in \eqref{eq:dis_cons_mas} in terms of $\phi^{n+1}$ using the Poisson equation \eqref{eq:dis_poisson} and obtain a modified discrete elliptic problem for $\phi^{n+1}$. Once $\phi^{n+1}$ is obtained after solving this elliptic problem, the mass update and the momentum update can be explicitly evaluated to get the density $\rho^{n+1}$ and velocity $\uu{u}^{n+1}$. In what follows, we establish the existence of a discrete solution to the numerical scheme \eqref{eq:dis_ep}.  
\begin{theorem}
    \label{thm:existence}
    Let $(\rho^n,\uu{u}^n,\phi^n)\in
    L_{\mcal{M}}(\Omega)\times\uu{H}_{\mcal{E},0}(\Omega)\times L_{\mcal{M}}(\Omega)$. Then there exists a solution $(\rho^{n+1},\uu{u}^{n+1},\phi^{n+1})\in L_{\mcal{M}}(\Omega)\times\uu{H}_{\mcal{E},0}(\Omega)\times L_{\mcal{M}}(\Omega)$ of the scheme \eqref{eq:dis_ep}.
\end{theorem}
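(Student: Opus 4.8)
The plan is to exploit the structure of the scheme, whereby the only implicit coupling is between the mass balance \eqref{eq:dis_cons_mas} and the Poisson equation \eqref{eq:dis_poisson}, while the momentum step is explicit. Adopting the explicit choice $\eta_\s=\eta/\rho_{\Ds}^n$, $\eta>5/4$, furnished by the remark following Theorem~\ref{lem:dis_tot_energy}, the stabilisation $\uu{Q}^{n+1}$ in \eqref{eq:dis_Q} depends on $\phi^{n+1}$ \emph{linearly} and on the remaining quantities only through the known level-$n$ data; likewise $\Lambda^n$ in \eqref{eq:dis_Lambda} carries no dependence on the new time level. Consequently, once $\phi^{n+1}$ is determined, the density follows algebraically from \eqref{eq:dis_poisson}, the dual averages $\rho_{\Ds}^{n+1}$ from \eqref{eq:mass_dual}, and the velocity $\uu{u}^{n+1}$ from the explicit update \eqref{eq:dis_vel_dual}. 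The whole existence question therefore reduces to solving a single scalar linear problem for $\phi^{n+1}$ (if one instead retains the implicit choice $\eta_\s\geq 1/\rho_{\Ds}^{n+1}$, the same conclusion can be reached by a Brouwer fixed-point argument, but the explicit choice keeps the reasoning purely linear-algebraic).

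To obtain that problem I would substitute $\rho_K^{n+1}=1+\veps^2(\Delta_{\M}\phi^{n+1})_K$ from \eqref{eq:dis_poisson} into the mass update \eqref{eq:dis_cons_mas}, after inserting \eqref{eq:dis_Q} into the stabilised flux \eqref{eq:stabterms}. Collecting the terms containing $\phi^{n+1}$ on the left and using $\Delta_{\M}=\divM\gre$, the coupled pair collapses to
\begin{equation}
\label{eq:ell_phi_prop}
\big(\divM(a^n\,\gre\phi^{n+1})\big)_K = b_K^n, \quad \forall K\in\M,
\end{equation}
where the edge coefficient $a^n_\s=\veps^2+\dt^2\eta_\s(\rho_\s^n)^2$ is \emph{strictly positive} and the right-hand side $b^n$ assembles only level-$n$ data (the explicit part $\abs{\s}\rho_\s^n u_{\s,K}^n$ of the mass flux, the pressure-gradient part of $\uu{Q}^{n+1}$, and $\rho_K^n-1$). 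Thus \eqref{eq:ell_phi_prop} is an anisotropic discrete Poisson problem with a uniformly elliptic coefficient, supplemented by the discretised boundary conditions \eqref{eq:dis_bdary_dir}--\eqref{eq:dis_bdary_neu}.

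It then remains to invert \eqref{eq:ell_phi_prop}. Since the unknown lives in the finite-dimensional space $\Lm$, it suffices to show that the associated homogeneous operator is injective. Testing \eqref{eq:ell_phi_prop} against $\phi^{n+1}$ in the $L^2$ inner product and invoking the gradient-divergence duality \eqref{eq:disc_dual} (with the positive edge weight $a^n$) shows that the associated quadratic form equals $-\int_\Omega a^n\,\abs{\gre\phi^{n+1}}^2\,\dd\uu{x}$, which is nonpositive and vanishes only when $\gre\phi^{n+1}\equiv 0$, i.e.\ when $\phi^{n+1}$ is constant. If $\Gamma^D\neq\emptyset$, the homogeneous Dirichlet condition forces that constant to be zero, so the operator is symmetric negative definite and \eqref{eq:ell_phi_prop} is uniquely solvable. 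In the purely Neumann or periodic case the kernel consists of the constants, and I would verify the Fredholm solvability condition $\sum_{K\in\M}\abs{K}\,b_K^n=0$: this follows because the flux contributions to $b^n$ telescope to zero by the conservativity of the discrete fluxes, leaving $\sum_K\abs{K}(\rho_K^n-1)$, which vanishes thanks to the discrete mass conservation $\sum_K\abs{K}\rho_K^n=\sum_K\abs{K}\rho_K^0=\abs{\Omega}$ built into the scheme; the resulting $\phi^{n+1}$ is unique up to an additive constant that affects neither $\rho^{n+1}$ nor $\gre\phi^{n+1}$. The main obstacle is this coercivity step: one must confirm that the weighted divergence-gradient operator in \eqref{eq:ell_phi_prop} is genuinely definite for each admissible boundary condition and that the right-hand side meets the compatibility condition in the kernel case. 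The strict positivity of $a^n$ together with the duality \eqref{eq:disc_dual} is exactly what makes this work, while the well-posedness of the final velocity update additionally relies on the strict positivity of $\rho_{\Ds}^{n+1}$.
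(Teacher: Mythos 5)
Your overall reduction is the same as the paper's: eliminate $\rho^{n+1}$ between the mass update \eqref{eq:dis_cons_mas} and the Poisson equation \eqref{eq:dis_poisson} to obtain a single discrete elliptic problem for $\phi^{n+1}$ with edge coefficient $\veps^2+\eta_\s\dt^2(\rho^n_\s)^2>0$, then recover $\rho^{n+1}$ and $\uu{u}^{n+1}$ explicitly. Where you genuinely diverge is in the invertibility argument. The paper assembles the linear system $\mbb{A}^n\uu{\phi}^{n+1}=\uu{b}^n$ (restricting attention to the case $\uu{\gamma}(\Gamma^D)>0$) and shows $\mbb{A}^n$ is diagonally dominant with positive diagonal and nonpositive off-diagonal entries, hence an M-matrix and invertible. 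You instead prove injectivity of the homogeneous problem by a coercivity argument: testing against $\phi^{n+1}$ and summing by parts via \eqref{eq:disc_dual} gives the negative-definite form $-\sum_\s\abs{\Ds}a^n_\s\abs{(\gre\phi^{n+1})_\s}^2$, whose kernel is the constants, killed by any nonempty Dirichlet part. Your route has the advantage of covering the purely Neumann/periodic case, which the paper explicitly sets aside, via the Fredholm compatibility condition $\sum_K\abs{K}\hat\rho^n_K=0$ (telescoping of the conservative fluxes plus discrete conservation of total mass); the paper's M-matrix route has the advantage of additionally delivering monotonicity and the discrete maximum principle, which the paper reuses later (cf.\ the $L^\infty$ bound \eqref{eq:ub_phi} in the consistency proof). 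Your remark that the explicit choice $\eta_\s=\eta/\rho^n_{\Ds}$ keeps the problem linear is a point the paper leaves implicit and is worth making.

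Two small cautions. First, the duality \eqref{eq:disc_dual} is stated for $\uu{v}\in\Hez$; with a Dirichlet edge the discrete gradient of $\phi^{n+1}$ does not vanish there, so your summation by parts produces boundary terms $\sum_{\s\subset\Gamma^D}\abs{\s}\phi_\s(\cdots)$. These vanish for the \emph{homogeneous} problem (where $\phi_\s=0$ on $\Gamma^D$), which is all injectivity requires, but you should say so rather than invoke \eqref{eq:disc_dual} verbatim. Second, in the kernel case the compatibility condition needs $\sum_K\abs{K}\rho^0_K=\abs{\Omega}$ as a hypothesis on the initial data (it is the discrete analogue of the solvability condition for the continuous Neumann--Poisson problem), so it should be stated as an assumption rather than presented as automatic. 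Neither point is a gap in the argument, only in its bookkeeping.
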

\begin{proof}
    Eliminating the implicit term $\rho^{n+1}_K$ between \eqref{eq:dis_cons_mas} and \eqref{eq:dis_poisson} yields the discrete elliptic equation
    \begin{equation}
    \label{eq:reform_poisson}
   -\frac{1}{\abs{K}}\sum_{\s\in\E(K)}\abs{\s}\big(\veps^2+\eta_\s\dt^2(\rho_\s^n)^2\big)(\partial^{(i)}_{\E}\phi^{n+1})_{\s,K}=\hat{\rho}^{n}_{K},\;K\in\M,
\end{equation}
where
\begin{equation*}
    \hat{\rho}^{n}_{K}=-\rho_{K}^{n}+1+\frac{\dt}{\abs{K}}\sum_{\s\in\E(K)}\abs{\s}(\rho_{\s}^n u_{\s,K}^n-\eta_\s\dt\rho_\s^n(\partial^{(i)}_{\E}p^n)_{\s,K}).
\end{equation*}
In order to simplify the presentation, we assume a mixed boundary condition of the type \eqref{eq:cont_bdary} with $\uu{\gamma}(\Gamma^D) > 0$ and show that \eqref{eq:reform_poisson}, coupled with the boundary conditions \eqref{eq:dis_bdary_dir}-\eqref{eq:dis_bdary_neu}, is indeed a linear system for the unknown $\phi^{n+1}$. The values of $\rho^n_\sigma$ and $\eta_\sigma$ on any external edge $\s\in\E_\extr$ are defined according to Lemma~\ref{lem:rho_sig} and Theorem~\ref{lem:dis_tot_energy} after appropriate boundary conditions on $\rho$ are applied. The linear system can be obtained as
\begin{equation}
    \mathbb{A}^{n}\uu{\phi}^{n+1}=\uu{b}^{n},
\end{equation}
where $\uu{\phi}^{n+1}=(\phi_K^{n+1},\phi_{\s}^{n+1})_{K\in\M, \s\in\E^{D}}$, $\uu{b}^{n}=(\hat{\rho}_K^{n},\phi^{D}_\s)_{K\in\M, \s\in\E^{D}}$ and $\mbb{A}^n\in\mbb{R}^{\#(\M\cup\E^{D})\times\#(\M\cup\E^{D})}$ is a sparse symmetric matrix defined by
   \begin{align}
    \mbb{A}^n_{K,K_\s} &=-\frac{1}{\abs{K}}\frac{\abs{\s}^2}{\abs{\Ds}}(\veps^2+\eta_\s\dt^2(\rho_\s^n)^2), \;K\in\M,\; \s\in\E(K) \\
    \mbb{A}^n_{K,K} &=\frac{1}{\abs{K}}\sum_{\s\in\Ek}\frac{\abs{\s}^2}{\abs{\Ds}}(\veps^2+\eta_\s\dt^2(\rho_\s^n)^2), \; K\in\M,\\
    \mbb{A}^n_{\s,\s} &=1, \ \mbb{A}^n_{\s,\ell}=0 \; \text{ if } \ell\neq\s, \;\s\in\E^D.
   \end{align} 
In the notations introduced above, $K_\s = L$ if $\s\in\E_\intr$, $\s= K|L$ and $K_\s = \s$ if $\s\in\E(K)\cap\E^D$, whereas $\ell$ denotes any arbitrary index in $\M\cup\E^D$. For any $K\in\M$ we have  
\begin{equation}
\label{eq:diagdom_K}
    \mbb{A}^n_{K,K} = \sum_{\s\in\E(K)}\abs{\mbb{A}^n_{K,K_\s}},
\end{equation}
and for any $\s\in\E(K)\cap\E^D$ we trivially have
\begin{equation}
\label{eq:diagdom_sig}
\mbb{A}^n_{\s,\s}>\sum_{\ell\in\E(K)\cap\E^D}\abs{\mbb{A}^n_{\s,\ell}} + \abs{\mbb{A}^n_{\s,K}},
\end{equation}
thanks to the fact that $\eta_\s\dt^2(\rho^n_\s)^2\geq0$, on all the interfaces $\s\in\E$, which makes the matrix $\mbb{A}^n$ diagonally dominant. We further notice that 
\begin{equation}
\label{eq:diagdom}
\mbb{A}^n_{k,k} > 0\;\mathrm{and} \;\mbb{A}^n_{k,\ell} \leq 0\; \mathrm{for\;each}\; k, \ell\in\M\cup\E^D,\; k \neq\ell.    
\end{equation}
From \eqref{eq:diagdom_K}-\eqref{eq:diagdom} we conclude that $\mbb{A}^n$ is an M-matrix \cite[Criterion 4.18]{Hac17}, which guarantees the existence of a solution $\phi^{n+1}=\sum_{K\in\M}\phi^{n+1}_K\mcal{X}_K\in L_\M(\Omega)$. Subsequently, $\rho^{n+1}$ and $\uu{u}^{n+1}$ can be explicitly evaluated from \eqref{eq:dis_cons_mas} and \eqref{eq:dis_cons_mom}.
\end{proof}

\section{Weak Consistency of the Scheme}
\label{sec:weak_cons}

Goal of this section is to show a Lax-Wendroff-type weak consistency of the scheme \eqref{eq:dis_ep} which is essentially proving the consistency of the numerical solution with a weak solution of the Euler-Poisson system when the mesh parameters tend to zero. To this end, we consider an initial data $(\rho^{\veps}_0,\uu{u}^{\veps}_0)\in L^\infty(\Omega)^{1+d}$ and recall the following definition of a weak solution. 
\begin{definition}
\label{def:weak_soln}
A triple $(\rho^\veps,\uu{u}^\veps, \phi^{\veps})\in L^\infty(Q_T)\times L^\infty(Q_T)^d\times L^\infty(0,T;H^1(\Omega))$ is a weak solution to the Euler-Poisson system \eqref{eq:cons_mas}-\eqref{eq:poisson} with initial-boundary conditions \eqref{eq:eq_ic} if $\rho^\veps>0$ a.e.\ in $Q_T$ and the following identities holds:  
\begin{gather}
    \int_0^T\int_\Omega\left(\rho^\veps\D_t\phi+\rho^\veps \uu{u}^\veps\cdot\bgrd \psi\right)\dd\uu{x}\dd t
     =-\int_\Omega\rho^\veps_0\psi(0,\cdot)\dd\uu{x} \label{eq:weak_soln_mas} ,\ \text{for all } \psi \in C_c^\infty([0,T)\times\bar{\Omega}),\\
     \int_0^T\int_\Omega\left(\rho^\veps \uu{u}^\veps\cdot\D_t\uu{\psi}+(\rho^\veps \uu{u}^\veps\otimes\uu{u}^\veps):\bgrd \uu{\psi}\right)\dd\uu{x}\dd t+\int_0^T\int_{\Omega}p^\veps\dive\uu{\psi}\;\dd\uu{x}\dd t
     =\int_0^T\int_\Omega\rho\bgrd{\phi}\cdot\uu{\psi}\dd\uu{x}\dd t\nonumber\\
     -\int_{\Omega}\rho^\veps_0 \uu{u}^\veps_0\cdot\uu{\psi}(0,\cdot)\dd\uu{x}, \ \text{for all } \uu{\psi} \in C_c^\infty([0,T)\times\bar{\Omega})^d, \label{eq:weak_soln_mom}\\
    -\veps^2\int_0^T\int_\Omega\bgrd\phi\cdot\bgrd \psi\dd\uu{x}\dd t
     =\int_0^T\int_\Omega(\rho^\veps-1)\psi\dd\uu{x}\dd t \label{eq:weak_soln_poisson},\ \text{for all } \psi \in C_c^\infty([0,T)\times\bar{\Omega}).
    \end{gather}
\end{definition}
The following theorem is a Lax-Wendroff-type consistency formulation for the semi-implicit scheme \eqref{eq:cons_mas}-\eqref{eq:poisson}. In the theorem, we take the liberty to suppress $\veps$ wherever necessary for the sake of simplicity of writing.  
\begin{theorem}
\label{thm:weak_cons}
Let $\Omega$ be an open bounded set of $\mbb{R}^d$. Assume that $\big(\mcal{T}^{(m)},\delta t^{(m)}\big)_{m\in\mbb{N}}$ is a sequence of discretisations such that both $\lim_{m\rightarrow +\infty}\delta t^{(m)}$ and $\lim_{m\rightarrow +\infty}h^{(m)}$ are $0$. Let $\big(\rho^{(m)},u^{(m)},\phi^{(m)}\big)_{m\in\mbb{N}}$ be the corresponding sequence of discrete solutions with respect to an initial data $(\rho^\veps_0,u^\veps_0,\phi^\veps_{0})\in L^\infty(\Omega)^{d+2}$. We assume that $(\rho^{(m)},u^{(m)}, \phi^{(m)})_{m\in\mbb{N}}$ satisfies the following:
\begin{enumerate}[(i)]
\item $\big(\rho^{(m)},u^{(m)}\big)_{m\in\mbb{N}}$ is uniformly bounded in $L^\infty(Q)^{1+d}$, i.e.\ 
\begin{align}
\ubar{C}<(\rho^{(m)})^n_K &\leq \bar{C}, \ \forall K\in\mcal{M}^{(m)}, \ 0\leq n\leq N^{(m)}, \ \forall m\in\mbb{N}\label{eq:dens_abs_bound}, \\
|(u^{(m)})^n_\sigma| &\leq C, \ \forall \sigma\in\mcal{E}^{(m)}, \ 0\leq n\leq N^{(m)}, \ \forall m\in\mbb{N}\label{eq:u_abs_bound}. 
\end{align}
where $\ubar{C}, \bar{C}, C>0$ are constants independent of discretisations. 
\item $\big(\rho^{(m)},u^{(m)}, \phi^{(m)}\big)_{m\in\mbb{N}}$ converges to $(\rho^\veps,\uu{u}^\veps, \phi^\veps)\in L^\infty(0, T; L^\infty(\Omega)\times L^\infty(\Omega)^d\times H^1(\Omega))$ in $L^r(Q_T)^{1+d+1}$ for $1\leq r<\infty$.
\end{enumerate}
We also assume that the sequence of discretisations $\big(\mcal{T}^{(m)},\delta t^{(m)}\big)_{m\in\mbb{N}}$ satisfies the mesh regularity conditions:
\begin{equation}
\label{eq:CFL_meshpar}
\frac{\delta t^{(m)}}{\min_{K\in\mcal{M}^{(m)}}\abs{K}}\leq\theta,\ \max_{K \in \mcal{M}^{(m)}} \frac{\diam(K)^2}{\abs{K}}\leq\theta,\;\forall m\in\mbb{N},
\end{equation}
where $\theta>0$ is independent of discretisations. 
Then $(\rho^\veps,u^\veps,\phi^\veps)$ satisfies the weak formulation \eqref{eq:weak_soln_mas}-\eqref{eq:weak_soln_poisson}.
\end{theorem}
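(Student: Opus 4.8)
The plan is to follow the Lax--Wendroff paradigm of Gallou\"et, Herbin and Latch\'e: for each of the three discrete balance laws one tests against the (primal or dual) interpolate of a smooth compactly supported test function, sums over all cells and time levels, reorganises the resulting sums by discrete integration by parts (Abel summation), and then passes to the limit $m\to\infty$ using hypotheses (i)--(ii) together with the mesh regularity \eqref{eq:CFL_meshpar}. Concretely, for the mass equation \eqref{eq:dis_cons_mas} I would fix $\psi\in C_c^\infty([0,T)\times\bar\Omega)$, set $\psi_K^n=\psi(t_n,x_K)$, multiply by $\dt\abs{K}\psi_K^n$ and sum over $K\in\M$ and $n$. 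Summation by parts in time produces the discrete analogue of $\int\!\!\int\rho\,\D_t\psi$ together with the initial-data term $-\int_\Omega\rho_0\psi(0,\cdot)$, while reorganising the flux sum edge-by-edge (using the conservativity $\fsigk=-F_{\sigma',K}$) moves the difference onto $\psi$ and yields $\int\!\!\int\rho\uu{u}\cdot\bgrd\psi$. These principal terms converge to the corresponding integrals in \eqref{eq:weak_soln_mas} by the $L^r$ convergence in (ii) and the smoothness of the interpolates, the Taylor remainders being $O(\dt)$ and $O(h)$; the stabilisation flux $Q_\sigma^{n+1}$ of \eqref{eq:dis_Q} carries an explicit factor $\dt$, so under \eqref{eq:dens_abs_bound}--\eqref{eq:u_abs_bound} its contribution is $O(\dt)$ and vanishes.

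For the momentum equation \eqref{eq:dis_cons_mom} I would run the same programme on the dual mesh, testing against the edge interpolate $\uu{\psi}_\sigma^n$ of a vector field $\uu{\psi}\in C_c^\infty([0,T)\times\bar\Omega)^d$. The new feature is the upwind convective flux, which I split into a central part plus numerical diffusion,
\[
F_{\epsilon,\sigma}^n u_{\epsilon,\mathrm{up}}^n
= F_{\epsilon,\sigma}^n\,\tfrac{u_\sigma^n+u_{\sigma'}^n}{2}
-\tfrac12\abs{F_{\epsilon,\sigma}^n}\,(u_{\sigma'}^n-u_\sigma^n).
\]
After summation by parts the central part furnishes $\int\!\!\int(\rho\uu{u}\otimes\uu{u}):\bgrd\uu{\psi}$, the pressure gradient gives $\int\!\!\int p\,\dive\uu{\psi}$ via the duality \eqref{eq:disc_dual}, and the right-hand side gives $\int\!\!\int\rho\bgrd\phi\cdot\uu{\psi}$; the potential shift $\Lambda^n$ of \eqref{eq:dis_Lambda} again carries a factor $\dt$ and is negligible. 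Here one must also check that the interface density $\rho_\sigma$ furnished by Lemma~\ref{lem:rho_sig} and the dual reconstruction $\rho_{\Ds}$ are consistent reconstructions of $\rho$, so that the nonlinear products indeed converge to the claimed limits.

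The main obstacle is the numerical-diffusion remainder left over from the upwinding, namely the terms
\[
\tfrac12\sum_n\dt\sum_{\epsilon=\Ds|D_{\sigma'}}\abs{F_{\epsilon,\sigma}^n}\,(u_{\sigma'}^n-u_\sigma^n)\,(\uu{\psi}_{\sigma'}^n-\uu{\psi}_\sigma^n).
\]
To control these I would invoke a weak-BV estimate: the kinetic dissipation term denoted $\mcal{S}$ in the proof of Theorem~\ref{lem:dis_tot_energy} bounds $\sum_n\dt\sum_\epsilon(F_{\epsilon,\sigma}^n)^-(u_{\sigma'}^n-u_\sigma^n)^2$ uniformly in $m$. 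A Cauchy--Schwarz splitting then bounds the remainder by the product of this (bounded) dissipation's square root and $\big(\sum_n\dt\sum_\epsilon\abs{F_{\epsilon,\sigma}^n}\,\abs{\uu{\psi}_{\sigma'}^n-\uu{\psi}_\sigma^n}^2\big)^{1/2}$; since $\abs{\uu{\psi}_{\sigma'}^n-\uu{\psi}_\sigma^n}=O(h)$ and the aggregate mass flux is controlled under \eqref{eq:u_abs_bound} and the CFL condition \eqref{eq:cfl}, the second factor is $O(\sqrt h)$ and the whole term vanishes. I expect this weak-BV step, together with establishing the requisite uniform control of $\abs{F_{\epsilon,\sigma}^n}$ from \eqref{eq:cfl} and \eqref{eq:CFL_meshpar}, to be the technically most delicate part of the argument.

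Finally, the Poisson equation \eqref{eq:dis_poisson} is the easiest of the three because it is linear and carries no upwinding. Testing against $\psi_K^n$, writing $\Delta_\M=\divM\bgrd_\E$ and applying the discrete duality \eqref{eq:disc_dual} converts the left-hand side into $-\veps^2\sum_n\dt\int_\Omega\bgrd_\E\phi\cdot\bgrd_\E\psi\,\dd\uu{x}$, which converges to $-\veps^2\int\!\!\int\bgrd\phi\cdot\bgrd\psi$ since $\phi^{(m)}\to\phi$ in $H^1$ by (ii), while the right-hand side converges to $\int\!\!\int(\rho-1)\psi$ by the $L^r$ convergence; this recovers \eqref{eq:weak_soln_poisson} with no weak-BV control required. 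Collecting the three limits gives that $(\rho^\veps,\uu{u}^\veps,\phi^\veps)$ satisfies \eqref{eq:weak_soln_mas}--\eqref{eq:weak_soln_poisson}, completing the proof.
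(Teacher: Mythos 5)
Your overall architecture (test against interpolates, Abel summation in time, edge-by-edge reorganisation of the fluxes, pass to the limit using (i)--(ii) and the mesh regularity) is the same Lax--Wendroff programme the paper follows, and your weak-BV treatment of the upwind diffusion is a standard and acceptable way to handle the convection term (the paper instead delegates this to the cited consistency results of \cite{HLN+23} and \cite{GHL22}). However, there is one genuine gap in your treatment of the potential. You write that the discrete Laplacian term ``converges to $-\veps^2\int\!\!\int\bgrd\phi\cdot\bgrd\psi$ since $\phi^{(m)}\to\phi$ in $H^1$ by (ii)'', and you use the same implicit assumption when passing to the limit in the source term $\rho_\s^n(\partial^{(i)}_\E\phi^{n+1})_\s$. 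But hypothesis (ii) only provides convergence of $\phi^{(m)}$ in $L^r(Q_T)$; the limit is \emph{asserted} to lie in $L^\infty(0,T;H^1(\Omega))$, and the discrete functions $\phi^{(m)}$ are piecewise constant, so no convergence of their discrete gradients is available from the hypotheses. This is exactly the point where the paper has to do extra work: it derives a uniform $L^\infty$ bound on $\phi^{(m)}$ from the discrete Poisson equation by an M-matrix argument, then a uniform discrete $H^1$-seminorm estimate $\sum_{i,\s}\abs{\Ds}(\D^{(i)}_\E\phi^n)_\s^2\leq C$, and only then concludes the weak-$*$ convergence $\bgrd_{\E^{(m)}}\phi^{(m)}\overset{\ast}{\rightharpoonup}\bgrd\phi^\veps$ in $L^\infty(0,T;L^2(\Omega)^d)$, which is what actually justifies both the limit of the source term (paired with the strong convergence of $\rho^{(m)}$) and the weak Poisson identity. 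Without these a priori bounds on the discrete potential your argument for \eqref{eq:weak_soln_mom} and \eqref{eq:weak_soln_poisson} does not close.

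A secondary imprecision: you dismiss the stabilisation terms $Q_\s^{n+1}$ and $\Lambda_K^n$ as ``$O(\dt)$'' under the $L^\infty$ bounds alone. Note that $Q_\s^{n+1}$ contains the discrete gradients $(\partial^{(i)}_\E p^n)_\s$ and $(\partial^{(i)}_\E\phi^{n+1})_\s$, which scale like $h^{-1}$ times an undifferenced jump, so the factor $\dt$ by itself does not make these remainders small; the paper controls them through discrete space-translate estimates (exploiting the strong $L^r$ convergence of $\rho^{(m)}$ and $\phi^{(m)}$) together with the mesh regularity \eqref{eq:CFL_meshpar}. You should either invoke the same translate argument or make explicit use of the strong coupling $\delta t^{(m)}\leq\theta\min_K\abs{K}$ to justify discarding these terms.
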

\begin{proof}
Our approach follows analogous lines as in \cite[Lemma 4.1]{GHL22} and hence we skip most of the calculations except the ones related to the velocity stabilisation terms. Proceeding as in \cite[Lemma 4.1]{GHL22}, we multiply the mass update \eqref{eq:dis_cons_mas} by $\psi^n_K\abs{K}$, where $\psi^n_K$ denotes the extrapolation of the smooth and compactly supported test function on $\psi$ at $(t^n,\uu{x}_K)$. For the sake of of simplicity, we choose $m$ large enough such that $(\psi^{(m)})^n_K = 0$ for all $K\in\M^{(m)}$ that has an edge lying on the boundary. By an abuse of notation, we denote by $\M$ the collection of all the primal cells $K$ such that $K\cap\partial\Omega = \emptyset$ in the subsequent analysis. We obtain the following remainder term $R^{(m)}_1$ from the stabilisation terms in the mass update \eqref{eq:dis_cons_mas} upon summing over the space and time discretisation parameters
\begin{align}
R^{(m)}_1&=\sum_{n=0}^{N^{(m)}-1}\dt^n\sum_{K\in\M}\diam(K)\sum_{\s\in\E(K)}\abs{\s}\abs{Q^{n+1}_\sigma}\\
&\leq R_{1,1}^{(m)}+R_{1,2}^{(m)}, \nonumber
\end{align}
where
\begin{align}
\label{eq:pr_grd_trunc}
R_{1,1}^{(m)}&=C(\psi, \bar{C})\sum_{n=0}^{N^{(m)}-1}\dt^n\sum_{K\in\M}\diam(K)\sum_{\s\in\E(K)}\abs{\s}\eta\dt^n\frac{\abs{\s}}{\abs{\Ds}}\abs{\rho^n_{L}-\rho^n_{K}},\\
\label{eq:source_trunc}
R_{1,2}^{(m)}&=C(\psi, \bar{C})\sum_{n=0}^{N^{(m)}-1}\dt^n\sum_{K\in\M}\diam(K)\sum_{\s\in\E(K)}|\s|\eta\dt^n\frac{\abs{\s}}{\abs{\Ds}}\abs{\phi^{n+1}_{L}-\phi^{n+1}_{K}},
\end{align}
where $C(\psi, \bar{C})>0$ is a constant depending only on $\psi$ and $\bar{C}$. Following similar techniques as introduced in \cite[Section 4]{GHL19} to study the convergence of discrete space translates, it can be shown that the right hand sides of \eqref{eq:pr_grd_trunc} and \eqref{eq:source_trunc} tends to $0$ as $m\rightarrow \infty$ under the uniform boundedness assumptions \eqref{eq:dens_abs_bound} in (i), strong convergence assumptions detailed in (ii) and the regularity conditions on the mesh parameters \eqref{eq:CFL_meshpar}; see also \cite[Lemma A.1]{GHL22} for a similar treatment on discrete space translates.

In order to obtain the consistency of the momentum update, we multiply \eqref{eq:dis_cons_mom} by $\psi^n_\s\abs{\Ds}$ where $\psi^n_\s$ denotes the point value of the extrapolation of the compactly supported, smooth test function $\uu{\psi}$ at $(t^n,\uu{x}_\s)$. The weak consistency of the discrete convection operator and the pressure gradient in the momentum update \eqref{eq:dis_cons_mom} follows under the given assumptions using a similar method as in the proof of \cite[Theorem 4.1]{HLN+23}. The residual terms appearing due to the velocity stabilisation again converges to $0$ by a similar argument as in the case of the mass equation. It only remains to establish the consistency of the right hand side term in the momentum update \eqref{eq:dis_cons_mom}. Upon summation over all $\s\in\E^{(i)}_\intr$, $1\leq i\leq d$ and all $n\in\{0,1,\dots,N^{(m)-1}\}$, the source term of \eqref{eq:dis_cons_mom} yields the remainders
\begin{align}
T^{(m)}&=\sum_{n=0}^{N^{(m)}-1}\dt^n\sum_{i=1}^d\sum_{\s\in\E^{(i)}_\intr}\rho_\s^n\psi_\s^n(\partial^{(i)}_\E \phi^{n+1})_\s\abs{\Ds},\\
R_2^{(m)}&=-\sum_{n=0}^{N^{(m)}-1}\dt^n\sum_{K\in\M}\Lambda_K^n\sum_{\s\in\E(K)}|\s|\rho_\s^{n}\psi_\s^n.
\end{align}
Note that by assuming the uniform boundedness \eqref{eq:dens_abs_bound} of $(\rho^{(m)})_{m\in\mbb{N}}$, the discrete Poisson equation \eqref{eq:dis_poisson} with the boundary conditions \eqref{eq:dis_bdary_dir}-\eqref{eq:dis_bdary_neu} yields  a uniform bound for the sequence of discrete solutions $(\phi^{(m)})_{m\in\mbb{N}}$ upon carrying out a similar analysis as in \cite[Lemma A1]{CH+21} using an M-matrix argument. More precisely we can obtain
\begin{equation}
\label{eq:ub_phi}
    \norm{\phi^{(m)}}_{L^\infty(\Omega)}\leq C(\Omega),\;\forall m\in\mbb{N},
\end{equation}
where $C(\Omega)>0$ is a constant that depends only on the domain. Moreover, a similar analysis as in \cite[Lemma 3.5]{CH+21}, suitably adopted for a MAC grid, gives the following discrete $H^1$-seminorm estimate for any solution $\phi^n$ of \eqref{eq:dis_poisson}
\begin{equation}
 \label{eq:ub_phi_hsem}
 \sum_{i=1}^d\sum_{\s\in\E^{(i)}}\abs{\Ds}(\D^{(i)}_\E \phi^n)^2_\s\leq C, \ 0\leq n\leq N^{(m)},\;\forall m\in\mbb{N},
\end{equation}
where $C>0$ is a constant independent of the discretisation, owing to \eqref{eq:dens_abs_bound}, \eqref{eq:ub_phi} and the assumption that $\phi^D\in H^1(\Omega)$. Performing a similar analysis as in \cite[Proposition 4.5]{CH+21}, using \eqref{eq:ub_phi}, \eqref{eq:ub_phi_hsem}, the convergence assumption (ii) and the uniqueness of weak-* limits we obtain 
\begin{equation}
\label{eq:weakstar_grad}
    \bgrd_{\E^{(m)}}\phi^{(m)}\overset{\ast}{\rightharpoonup} \bgrd\phi^\veps\;\text{in}\;L^\infty(0, T; L^2(\Omega)^d) \ \text{as}\;m\rightarrow\infty.
\end{equation}
Following this observation we conclude that
\begin{equation}
    \lim_{m\rightarrow\infty}T^{(m)} = \int_0^T\int_\Omega\rho\bgrd{\phi}\cdot\uu{\psi}\dd\uu{x}\dd t.
\end{equation}
Moreover, the weak-* convergence \eqref{eq:weakstar_grad} further yields that $\phi^\veps$ satisfies \eqref{eq:weak_soln_poisson}; see \cite[Proposition 4.5]{CH+21} for an analogous result with detailed calculations.
Finally, the residual term $R^{m}_{2}$ tend to zero as $m$ tends to $+\infty$, given the choice of $\Lambda^n_K$ from \eqref{eq:dis_Lambda} and following similar techniques of approximating space translates as in \cite[Lemma A.1]{GHL22}.
\end{proof}

\section{Quasi-neutral Limit}
\label{sec:quas_lim}
In this section, we formally derive the quasi-neutral limit of the semi-implicit scheme \eqref{eq:dis_ep}. We show that the formal $\veps\to 0$ limit of the scheme \eqref{eq:dis_ep} is a semi-implicit scheme for the momentum stabilised incompressible Euler system; see also \cite{ACS22, DLV08, Deg13} for analogous treatments. We start by introducing the notion of a well-prepared initial data.
\begin{definition}
\label{def:wp_id}
An initial datum $(\rho^\veps_0,\uu{u}^\veps_0)\in L^\infty(\Omega)^{1+d}$, $\rho^\veps>0$ is called well-prepared if it satisfies
\begin{equation}
\label{eq:wp_id}
\norm{\uu{u}^\veps_0}_{L^2(\Omega)^d} + \frac{1}{\veps^2}\norm{\rho^\veps_0 - 1}_{L^\infty(\Omega)}\leq C,
\end{equation}
where $C>0$ is a constant independent of $\veps$.
\end{definition}
\begin{proposition}
\label{prop:qn_formal}
Let $(\rho^\veps, \uu{u}^\veps, \phi^\veps)_{\veps>0}\in L_\M(\Omega)\times\uu{H}_\E(\Omega)\times L_\M(\Omega)$ be a sequence of discrete solutions of the semi-implicit scheme \eqref{eq:dis_cons_mas}-\eqref{eq:dis_poisson} with respect to a corresponding sequence of initial data that satisfies \eqref{eq:wp_id}. Let $(\rho,\uu{u},\phi)\in L_\M(\Omega)\times\uu{H}_\E(\Omega)\times L_\M(\Omega)$ be a formal limit of the sequence $(\rho^\veps, \uu{u}^\veps, \phi^\veps)_{\veps>0}$ as $\veps\rightarrow 0$. Then $(\rho,\uu{u},\phi)$ satisfies the following scheme for each $0\leq n < N$:
\begin{gather}
    (\dive_\M(\uu{u}^n-\uu{Q}^{n+1})_{K}=0,\; \forall K \in \mcal{M},
    \label{eq:dis_incomp_mas}
    \\
    \frac{1}{\dt}\big(u_\s^{n+1}-u_\s^{n}\big)+\frac{1}{\left|\Ds\right|}\sum_{\epsilon\in\tilde{\E}(\Ds)}F_{\epsilon,\sigma}(1,\uu{u}^{n}-\uu{Q}^{n+1})u_{\epsilon,\mathrm{up}}^{n}+(\partial^{(i)}_{\E}\phi^{n+1}_*)_{\sigma} = 0, \ 1\leq i\leq d, \ \forall \s\in\E_\intr^{(i)}, \label{eq:dis_incomp_mom}
    \\
    \rho^{n+1}_K = 1,\; \forall K\in\M,\label{eq:const_dens_incomp}
\end{gather}
where the correction $Q^{n+1}_\sigma = -\eta\dt(\D_\E^{(i)}\phi^{n+1})_\s$ for $\s\in\Eint^{(i)}$ with a constant $\eta>0$ defined as in \eqref{eq:eta_expl} and $\phi^{n+1}_* = -\sum_{K\in\M} (\phi^{n+1}_K - (\dive_\M~\uu{u}^n)_K)\mcal{X}_K\in L_\M(\Omega)$. 
\end{proposition}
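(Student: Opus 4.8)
The plan is to pass to the formal limit $\veps\to 0$ term by term in the three discrete equations \eqref{eq:dis_cons_mas}--\eqref{eq:dis_poisson}, using the assumed existence of the grid-function limit $(\rho,\uu{u},\phi)$ together with the well-prepared hypothesis \eqref{eq:wp_id}. The first and easiest step is the Poisson equation. Rewriting \eqref{eq:dis_poisson} as $\rho_K^{n+1}-1=\veps^2(\Delta_\M\phi^{n+1})_K$ and letting $\veps\to0$, the right-hand side vanishes because the formal limit $\phi$ (hence the fixed grid function $\Delta_\M\phi^{n+1}$) stays finite, so $\rho_K^{n+1}\to1$ for every $K\in\M$ and every $n\geq0$. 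The initial density is handled separately: the well-prepared bound \eqref{eq:wp_id} forces $\norm{\rho^\veps_0-1}_{L^\infty(\Omega)}=O(\veps^2)$, whence $\rho_K^0\to1$ as well. This gives \eqref{eq:const_dens_incomp}. Since the interface value $\rho_\s^n\in\llbracket\rho_K^n,\rho_L^n\rrbracket$ of Lemma~\ref{lem:rho_sig} is squeezed to $1$ and the dual average $\rho_{\Ds}^n$ of \eqref{eq:mass_dual} is a convex combination, we also have $\rho_\s^n,\rho_{\Ds}^n\to1$; consequently the pressure $p^n=(\rho^n)^\gamma\to1$ is constant and its discrete gradient $(\partial^{(i)}_\E p^n)_\s\to0$.

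Next I would pass to the limit in the two stabilisation terms. From the explicit prescription \eqref{eq:eta_expl} one gets $\eta_\s=\eta/\rho_{\Ds}^n\to\eta$, and substituting $\rho_\s^n\to1$ and $(\partial^{(i)}_\E p^n)_\s\to0$ into the choice \eqref{eq:dis_Q} yields $Q_\s^{n+1}\to-\eta\,\dt\,(\partial^{(i)}_\E\phi^{n+1})_\s$, which is precisely the correction named in the statement. Similarly \eqref{eq:dis_Lambda} with $\rho_\s^n\to1$ gives $\Lambda_K^n\to-\alpha\,\dt\,C\,(\dive_\M\uu{u}^n)_K$, a multiple of the discrete divergence of the limit velocity. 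With these in hand the mass update \eqref{eq:dis_cons_mas} is immediate: $\rho_K^{n+1}=\rho_K^n=1$ annihilates the time-difference, leaving $\sum_{\s\in\Ek}\abs{\s}(u_{\s,K}^n-Q_{\s,K}^{n+1})=0$, and division by $\abs{K}$ is exactly the incompressibility constraint $(\dive_\M(\uu{u}^n-\uu{Q}^{n+1}))_K=0$, i.e.\ \eqref{eq:dis_incomp_mas}. Inserting the limit of $\uu{Q}^{n+1}$ turns this into the elliptic relation $(\dive_\M\uu{u}^n)_K=-\eta\,\dt\,(\Delta_\M\phi^{n+1})_K$, which is what singles out $\phi^{n+1}$ as the Lagrange multiplier (pressure) of the limit system.

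For the momentum balance \eqref{eq:dis_cons_mom} I would again use $\rho_{\Ds}^{n+1}=\rho_{\Ds}^n=1$ to reduce the discrete time derivative to $\tfrac{1}{\dt}(u_\s^{n+1}-u_\s^n)$ and to discard the vanishing pressure-gradient term. The convective contribution is treated by recalling that each dual flux $F^n_{\epsilon,\s}$ is a fixed linear combination, with constant coefficients, of the primal fluxes $F^n_{\s,K}=\abs{\s}(\rho_\s^n u_{\s,K}^n-Q_{\s,K}^{n+1})$ of \eqref{eq:stabterms}; these converge to the unit-density fluxes built from the velocity field $\uu{u}^n-\uu{Q}^{n+1}$, so the convective term tends to $\tfrac{1}{\abs{\Ds}}\sum_{\epsilon}F_{\epsilon,\s}(1,\uu{u}^n-\uu{Q}^{n+1})\,u_{\epsilon,\mathrm{up}}^n$, as in \eqref{eq:dis_incomp_mom}. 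Finally the source $\rho_\s^n(\partial^{(i)}_\E\phi^{n+1,*})_\s$ with $\phi^{n+1,*}=\phi^{n+1}-\Lambda^n$ converges to $(\partial^{(i)}_\E(\phi^{n+1}-\Lambda^n))_\s$; transferring it to the left-hand side and combining the limit of $-\Lambda^n$ (a multiple of $\dive_\M\uu{u}^n$) with the sign-reversed $\phi^{n+1}$ identifies the effective potential $\phi^{n+1}_*$ of the statement, after normalising the constant factor $\alpha C\dt$ multiplying $\dive_\M\uu{u}^n$.

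I expect the one genuinely delicate point — and the place where a fully rigorous argument, rather than the formal one requested here, would invoke the energy estimates of Theorem~\ref{lem:dis_tot_energy} — to be the assertion that $\veps^2(\Delta_\M\phi^{n+1})_K\to0$: this needs an $\veps$-uniform control of $\bgrd_\E\phi^{n+1}$ (equivalently of $\Delta_\M\phi^{n+1}$), which at the continuous level is exactly what the well-prepared assumption and the discrete potential-energy balance supply. Once the limit grid functions are granted, the remaining work is the bookkeeping above, the only real subtlety being to track the signs and the constant multiplying $\dive_\M\uu{u}^n$ when assembling $\phi^{n+1}_*$ from $-(\phi^{n+1}-\Lambda^n)$.
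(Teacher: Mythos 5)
Your proof is correct at the formal level and arrives at the same identities as the paper, but the two arguments are organised differently. The paper's proof pivots on the reformulated elliptic problem \eqref{eq:reform_poisson}: eliminating $\rho^{n+1}$ between the mass update and the Poisson equation gives an elliptic equation whose coefficient $\veps^2+\eta_\s\dt^2(\rho^n_\s)^2$ remains uniformly elliptic as $\veps\to0$, so the limit of $\phi^{n+1}$ is pinned down as the solution of a well-posed problem, the divergence constraint \eqref{eq:dis_incomp_mas} falls out of its right-hand side, and quasineutrality $\rho^{\veps,n}-1=O(\veps^2)$ is propagated by induction in $n$ starting from the well-prepared data. You instead read $\rho^{n+1}_K\to1$ directly off the Poisson equation (using the assumed finiteness of the limit grid function $\phi$ on a fixed mesh) and then extract the divergence constraint from the mass update once the time difference vanishes. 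The two computations are equivalent bookkeeping, but the paper's ordering makes visible \emph{why} $\phi^{n+1}$ stays bounded as $\veps\to0$ --- the stabilisation term in the coefficient of \eqref{eq:reform_poisson} --- which is precisely the delicate point you flag at the end and which, in your ordering, is simply absorbed into the formal-limit hypothesis. Your treatment of the remaining terms (interface and dual densities squeezed to $1$, vanishing discrete pressure gradient, limits of $Q^{n+1}_\s$ and $\Lambda^n_K$, dual fluxes as fixed linear combinations of the primal ones) is sound. Finally, you are right that the limiting shifted potential carries the factor $\alpha C\dt$ (and a sign) on the $\dive_\M\uu{u}^n$ contribution rather than the bare $-(\phi^{n+1}-\dive_\M\uu{u}^n)$ written in the statement; the paper glosses over this normalisation both in the proposition and in the accompanying remark, so making it explicit, as you do, is the honest reading rather than an error on your part.
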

\begin{remark}
Note that the updates \eqref{eq:dis_incomp_mas}-\eqref{eq:const_dens_incomp} gives a semi-implicit scheme for the stabilised incompressible Euler system
\begin{subequations}
\label{eq:vel_stab_incomp}
\begin{align}
    \label{eq:vstab_incomp_vel}
    \D_t \uu{U} + \bdive~(\uu{U}\otimes(\uu{U}-\uu{Q})) + \bgrd(\Phi - \Lambda) &= \uu{0}, 
    \\
    \label{eq:vstab_incomp_div}
    \dive\;(\uu{U} - \uu{Q}) &= 0,
\end{align}
\end{subequations}
with $\uu{Q} = \eta\bgrd\Phi$ and $\Lambda = \dive\;\uu{U}$. 
\end{remark}
\begin{proof}
 From the discretisation of the initial data \eqref{eq:dis_ic_den} and well prepared condition \eqref{eq:wp_id} we have
\begin{equation}
\label{eq:dis_wp_qn}
    \abs{\rho^{\veps, 0}_K - 1} \leq C\veps^2,\; \forall K\in\M.
\end{equation}  
Upon taking a formal limit of the modified elliptic problem \eqref{eq:reform_poisson} with a constant timestep $\dt$ and following \eqref{eq:dis_wp_qn} we obtain
\begin{equation}
\label{eq:dis_incomp_reform}
    -\frac{\eta\dt}{\abs{K}}\sum_{\s\in\E(K)}\abs{\s}(\partial^{(i)}_{\E}\pi^{1})_{\s,K}= \frac{1}{\abs{K}}\sum_{\s\in\E(K)}\abs{\s}u^0_{\s,K},\;\forall K\in\M,
\end{equation}
which gives \eqref{eq:dis_incomp_mas} for $n = 1$. Subsequently, taking the formal limits of the updates \eqref{eq:dis_cons_mas}, \eqref{eq:dis_cons_mom} and using \eqref{eq:dis_incomp_reform} we obtain \eqref{eq:dis_incomp_mom}-\eqref{eq:const_dens_incomp} for $n=1$. Performing the same for the consecutive time-steps, we obtain that $\rho^{\veps,n} - 1 = O(\veps^2),\; n>0$ and $(\rho,\uu{u},\phi)$ satisfies \eqref{eq:dis_incomp_mas}-\eqref{eq:const_dens_incomp} for each $1\leq n < N$.  
\end{proof}

\section{Numerical Results}
\label{sec:num_res}
In this section, we obtain some numerical results with the semi-implicit scheme \eqref{eq:dis_cons_mas}-\eqref{eq:dis_poisson}. Note that the stability analysis performed in Section~\ref{sec:SI_scheme}, cf.\
Theorem~\ref{lem:dis_tot_energy}, requires that the timesteps $\dt$ be chosen according to the conditions \eqref{eq:tstep_quad_suff}-\eqref{eq:suftime}.
Since the above stability condition \eqref{eq:tstep_quad_suff} is implicit and difficult to carry out, we derive a sufficient condition along the lines of \cite{CDV17, DVB17,DVB20}  which is easy to implement.

\begin{proposition}
Suppose $\dt>0$ be such that for each $\s\in\E^{(i)},\; i\in\{1,\dots,d\},\; \s = K|L$, the following holds: 
\begin{equation}
\label{eq:timestep_suff}
    \dt\max\Bigg\{\frac{\abs{\D K}}{\abs{K}},\frac{\abs{\D L}}{\abs{L}}\Bigg\}\Bigg(\abs{u^n_\s} + \sqrt{\eta}\sqrt{\left|p^{n}_L - p^{n}_K\right|+\rho^{n}_\s\left|\phi^{n+1}_L - \phi^{n+1}_K\right|}\Bigg)\leq \min\Big\{1,\; \frac{1}{5}\mu^{n}_{K,L}\Big\},
\end{equation}
where $\abs{\D K}=\sum_{\s\in\E(K)}\abs{\s}$ and $\mu^{n}_{K,L} = \displaystyle\frac{\min\{\rho^n_K,\rho^n_L\}}{\rho^{n}_\s}$. Then $\dt$ satisfies the inequality \eqref{eq:suftime}.
\end{proposition}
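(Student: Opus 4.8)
The plan is to derive \eqref{eq:suftime} from \eqref{eq:timestep_suff} in two movements: first bound the flux sum $\frac{\dt}{\abs{\Ds}}\sum_{\epsilon\in\tilde\E(\Ds)}\abs{F^n_{\epsilon,\s}}$ by $\frac15\rho^n_{\Ds}$, and then use the dual mass balance \eqref{eq:dis_cons_mass_dual} to turn the time-$(n+1)$ density in the denominator of \eqref{eq:suftime} into the time-$n$ density. The structural fact that makes this non-circular is that, with the explicit choice $\eta_\s=\eta/\rho^n_{\Ds}$ from \eqref{eq:eta_expl}, the stabilised flux $F^n_{\s,K}=\abs{\s}(\rho^n_\s u^n_{\s,K}-Q^{n+1}_{\s,K})$ together with \eqref{eq:dis_Q} depends only on $\rho^n$, $u^n$ and $\phi^{n+1}$, and never on $\rho^{n+1}$; thus the flux-sum bound is an explicit estimate that does not presuppose \eqref{eq:suftime}.

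For the flux-sum bound I would recall from Definition~\ref{def:disc_conv_flux} that each dual flux $F^n_{\epsilon,\s}$ is a constant-coefficient combination of the primal fluxes on the faces of the two primal cells forming $\Ds$, so by the triangle inequality it suffices to estimate a generic primal flux $\frac{\dt}{\abs{\Ds}}\abs{F^n_{\s,K}}$. I would split it into the convective part $\abs{\s}\rho^n_\s u^n_{\s,K}$ and the stabilisation part $\abs{\s}Q^{n+1}_{\s,K}$, and bound the geometric ratio $\frac{\abs{\s}}{\abs{\Ds}}$ by the mesh quantity $\max\{\frac{\abs{\D K}}{\abs{K}},\frac{\abs{\D L}}{\abs{L}}\}$ appearing in \eqref{eq:timestep_suff}. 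The convective part is then linear in $\dt$ and controlled by the $\abs{u^n_\s}$ summand of \eqref{eq:timestep_suff}, whereas the stabilisation part, carrying the extra factor $\eta_\s\dt$ from \eqref{eq:dis_Q}, is quadratic in $\dt$ and proportional to $\abs{p^n_L-p^n_K}+\rho^n_\s\abs{\phi^{n+1}_L-\phi^{n+1}_K}$. This is precisely the role of the square root in \eqref{eq:timestep_suff}: squaring $\sqrt{\eta}\sqrt{\abs{p^n_L-p^n_K}+\rho^n_\s\abs{\phi^{n+1}_L-\phi^{n+1}_K}}$ regenerates the pressure-and-potential factor of the stabilisation estimate.

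The key algebraic observation I would exploit is that the right-hand side of \eqref{eq:timestep_suff} equals $\min\{1,\frac15\mu^n_{K,L}\}\leq 1$, so its square is no larger than itself; hence both the linear convective contribution and the quadratic stabilisation contribution are each dominated by $\min\{1,\frac15\mu^n_{K,L}\}$. Using $\mu^n_{K,L}\rho^n_\s=\min\{\rho^n_K,\rho^n_L\}\leq\rho^n_{\Ds}$ to reinstate the dual density, and collecting the per-face contributions, one arrives at $\frac{\dt}{\abs{\Ds}}\sum_{\epsilon}\abs{F^n_{\epsilon,\s}}\leq\frac15\rho^n_{\Ds}$. Substituting this into $\rho^{n+1}_{\Ds}=\rho^n_{\Ds}-\frac{\dt}{\abs{\Ds}}\sum_\epsilon F^n_{\epsilon,\s}$ gives the lower bound $\rho^{n+1}_{\Ds}\geq\frac45\rho^n_{\Ds}$, i.e.\ $\rho^n_{\Ds}/\rho^{n+1}_{\Ds}\leq\frac54$. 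Dividing the flux-sum bound by $\rho^{n+1}_{\Ds}$ and using this ratio closes the argument, $\frac{\dt}{\abs{\Ds}}\sum_\epsilon\frac{\abs{F^n_{\epsilon,\s}}}{\rho^{n+1}_{\Ds}}\leq\frac15\cdot\frac{\rho^n_{\Ds}}{\rho^{n+1}_{\Ds}}\leq\frac15\cdot\frac54=\frac14$, which is \eqref{eq:suftime}.

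The main obstacle I anticipate is not the overall scheme above but the honest bookkeeping of the constants: one must track the fixed coefficients in the dual-to-primal flux decomposition together with the geometric ratios $\frac{\abs{\s}}{\abs{\Ds}}$ versus $\frac{\abs{\D K}}{\abs{K}}$, exploiting the factor one-half coming from the half-cell construction of $\Ds$ and the regularity of the grid, so that the accumulated contributions of all the dual faces collapse to exactly the factor $\frac15$. It is this calibration --- and not any single inequality --- that makes the explicit constant $\frac15$ in \eqref{eq:timestep_suff} produce the required $\frac14$ in \eqref{eq:suftime}.
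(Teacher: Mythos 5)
Your two-step argument --- first extracting from \eqref{eq:timestep_suff} the bound $\frac{\dt}{\abs{\Ds}}\sum_{\epsilon\in\tilde\E(\Ds)}\abs{F^n_{\epsilon,\s}}\leq\frac15\rho^n_{\Ds}$ (splitting each primal flux into its convective and stabilisation parts, using $\min\{1,x\}^2\leq\min\{1,x\}$ to absorb the $\dt^2$-quadratic stabilisation contribution, and using $\min\{\rho^n_K,\rho^n_L\}\leq\rho^n_{\Ds}$), then feeding this into the dual mass balance \eqref{eq:dis_cons_mass_dual} to get $\rho^n_{\Ds}/\rho^{n+1}_{\Ds}\leq\frac54$ and concluding $\frac15\cdot\frac54=\frac14$ --- is precisely the argument of \cite[Proposition 3.2]{CDV17} that the paper's one-line proof invokes, so your approach coincides with the paper's. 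The one spot in your deferred ``bookkeeping'' that genuinely needs care is the stabilisation part: with the explicit choice $\eta_\s=\eta/\rho^n_{\Ds}$ from \eqref{eq:eta_expl}, the estimate of $\frac{\dt\abs{\s}}{\abs{\Ds}}\abs{Q^{n+1}_\s}$ carries an extra factor $\rho^n_\s/\rho^n_{\Ds}$, and you must check that after combining with $\mu^n_{K,L}=\min\{\rho^n_K,\rho^n_L\}/\rho^n_\s$ the powers of the density actually balance to yield $\frac15\rho^n_{\Ds}$ rather than $\frac15\min\{\rho^n_K,\rho^n_L\}/\rho^n_{\Ds}$, which your phrase ``reinstate the dual density'' currently glosses over.
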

\begin{proof}
The proof follows the same lines of \cite[Proposition 3.2]{CDV17}, where a similar result has been obtained for an explicit scheme; see also \cite{AGK22,DVB17,DVB20} for analogous treatments. 
\end{proof}

\begin{remark}
    We use the the classical explicit scheme, as in \cite[Definition 1.7]{Deg13} as a reference scheme in the numerical experiments carried out in the following subsections. Following the notations from \cite{HL+20}, the fully-discrete classical scheme for on a collocated MAC grid reads
    \begin{subequations}
    \label{eq:cl_dis_ep}
     \begin{gather}
     \frac{1}{\dt}(\rho_{K}^{n+1}-\rho_{K}^{n})+\frac{1}{\abs{K}}\sum_{\s\in\E(K)}F_{\sigma,K}^{n}=0, \ \forall K\in \M, \label{eq:cl_dis_cons_mas}\\
    \frac{1}{\dt}(\rho_{K}^{n+1}\uu{u}_{K}^{n+1}-\rho_{K}^{n}\uu{u}_{K}^{n})+\frac{1}{\abs{K}}\sum_{\s\in\E(K)}F_{\sigma, K}^{n}\uu{u}_{\s}^{n}+(\bgrd_{\M}p^n)_K=\rho_{K}^{n+1}(\bgrd_{\M}\phi^{n+1})_K, \ \forall K\in\M,  \label{eq:cl_dis_cons_mom}\\
    \veps^2(\Delta_{\M}\phi^{n+1})_{K}=\rho_{K}^{n+1}-1, \ \forall K\in \M,  \label{eq:cl_dis_poisson}
 \end{gather}
\end{subequations}
with $F_{\sigma,K}^{n}=F_{\sigma}(\rho^n,\uu{u}^n)\cdot\uu{\nu}_{\s,K}$. In all our subsequent numerical experiments, $F_\s$ is chosen as the Rusanov flux
\begin{gather}
    F_{\sigma}(\rho,\uu{u})=\frac{\rho_K\uu{u}_K+\rho_L\uu{u}_L}{2}-\frac{\max(s_K,s_L)}{2}(\rho_L-\rho_K), \ \text{for }\s=K|L\in\E,
\end{gather}
where $s_K=\abs{u^n_K + \sqrt{\gm \rho_K^{\gm-1}}}$.
\end{remark}
\begin{remark}
Note that the classical scheme \eqref{eq:cl_dis_ep} is stable under a rather stringent time-step restriction $\dt\sim\mcal{O}(\veps)$; see e.g.\ \cite{ACS22, CDV07}, whereas the time-step restriction \eqref{eq:tstep_quad_suff} coupled with \eqref{eq:timestep_suff} allows larger time-steps for the semi-implicit scheme \eqref{eq:dis_ep}. To this end, we have implemented the time-steps $\dt<\veps$ for the reference classical scheme and the minimum of the time-steps obtained from  \eqref{eq:timestep_suff} and \eqref{eq:tstep_quad_suff} for the semi-implicit scheme in all the subsequent numerical experiments.
\end{remark}
\subsection{1D Periodic Perturbation of a Quasineutral State}
\label{sec:1d_perturb_periodic}
We consider the following initial data in terms of a small perturbation of a quasineutral state from \cite{CDV07}. The quasineutral state under consideration has a constant density $\rho = 1$ and a uniform constant horizontal velocity. The constant density in the Poisson equation yields $\phi = 0$. A small periodic perturbation of magnitude $\delta = \veps^2$ is added to the uniform horizontal velocity and the initial condition reads
\begin{equation}
    \rho(0, x) = 1.0, \; u(0, x) = 1.0 +\delta \cos (\kappa\pi x), \; \phi(0,x) = 0  
\end{equation}
where the frequency is $\kappa = 16$. The choice of the amplitude makes the initial data well-prepared, cf.\ \eqref{eq:wp_id}. The domain of the plasma is $[0,1]$ with periodic boundary conditions and the adiabatic constant is chosen $\gamma = 2$. In this test case we demonstrate the semi-implicit scheme's capability of recovering a quasineutral background state. We have chosen $\veps = 10^{-4}$ and we test the semi-implicit scheme \eqref{eq:dis_ep} against the classical explicit scheme \eqref{eq:cl_dis_ep} on a coarse mesh of $100$ mesh points. Note that the mesh does not resolve the parameter $\veps$. The plasma frequency for this configuration is $\omega = 1/\veps = 10^4$. Figures \ref{fig:1d_perturb_periodic_t0p01} and \ref{fig:1d_perturb_periodic_t0p1} shows comparison of the density and potential profiles at times $t = 0.01$ and $t = 0.1$ which corresponds to $100$ and $1000$ cycles respectively. The figures clearly indicate that the semi-implicit scheme is capable of capturing a solution near the quasineutral state and recovers the quasineutrality in the long-time asymptotic.
\begin{figure}[htbp]
    \centering
    \includegraphics[height=0.20\textheight]{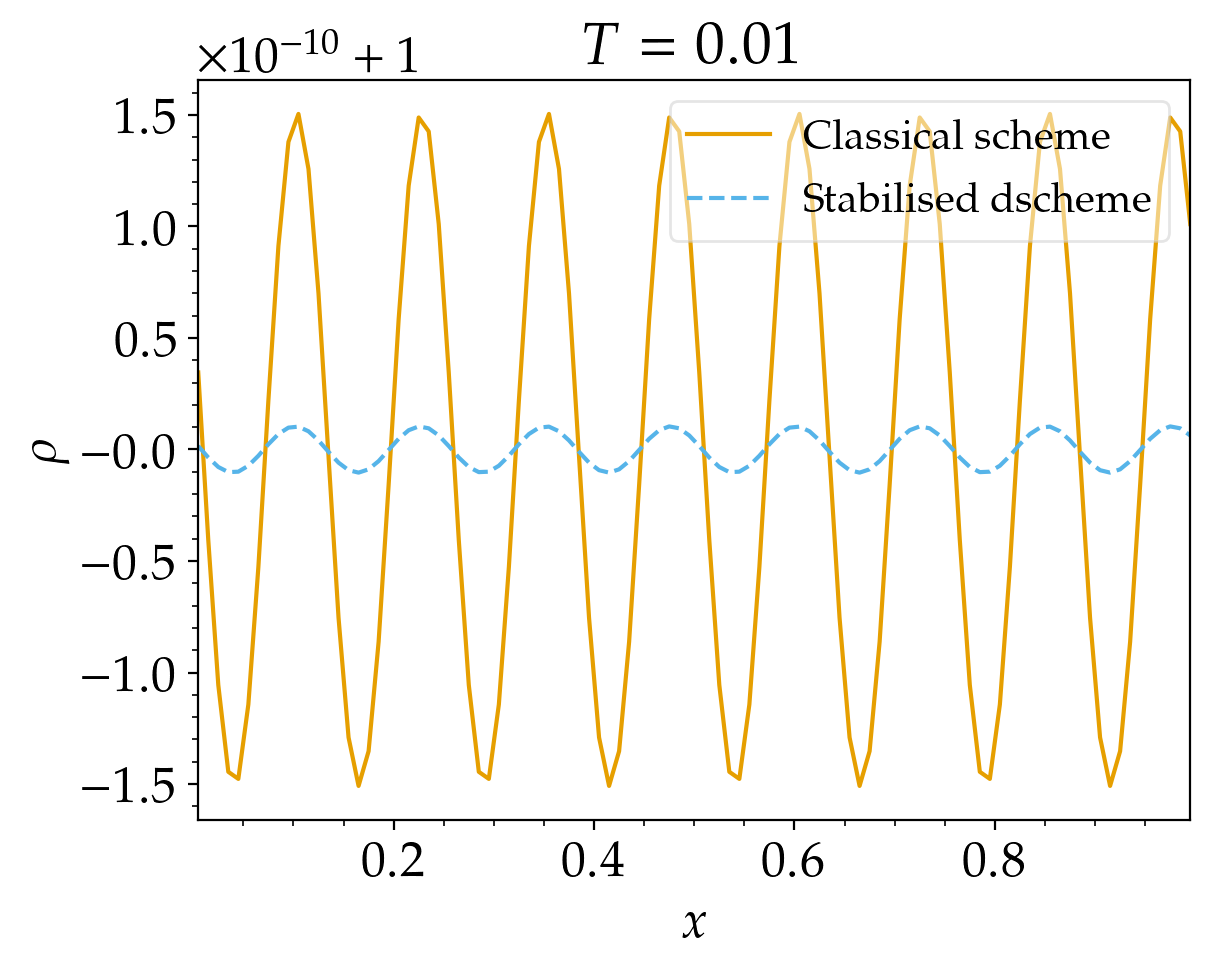}
    \includegraphics[height=0.20\textheight]{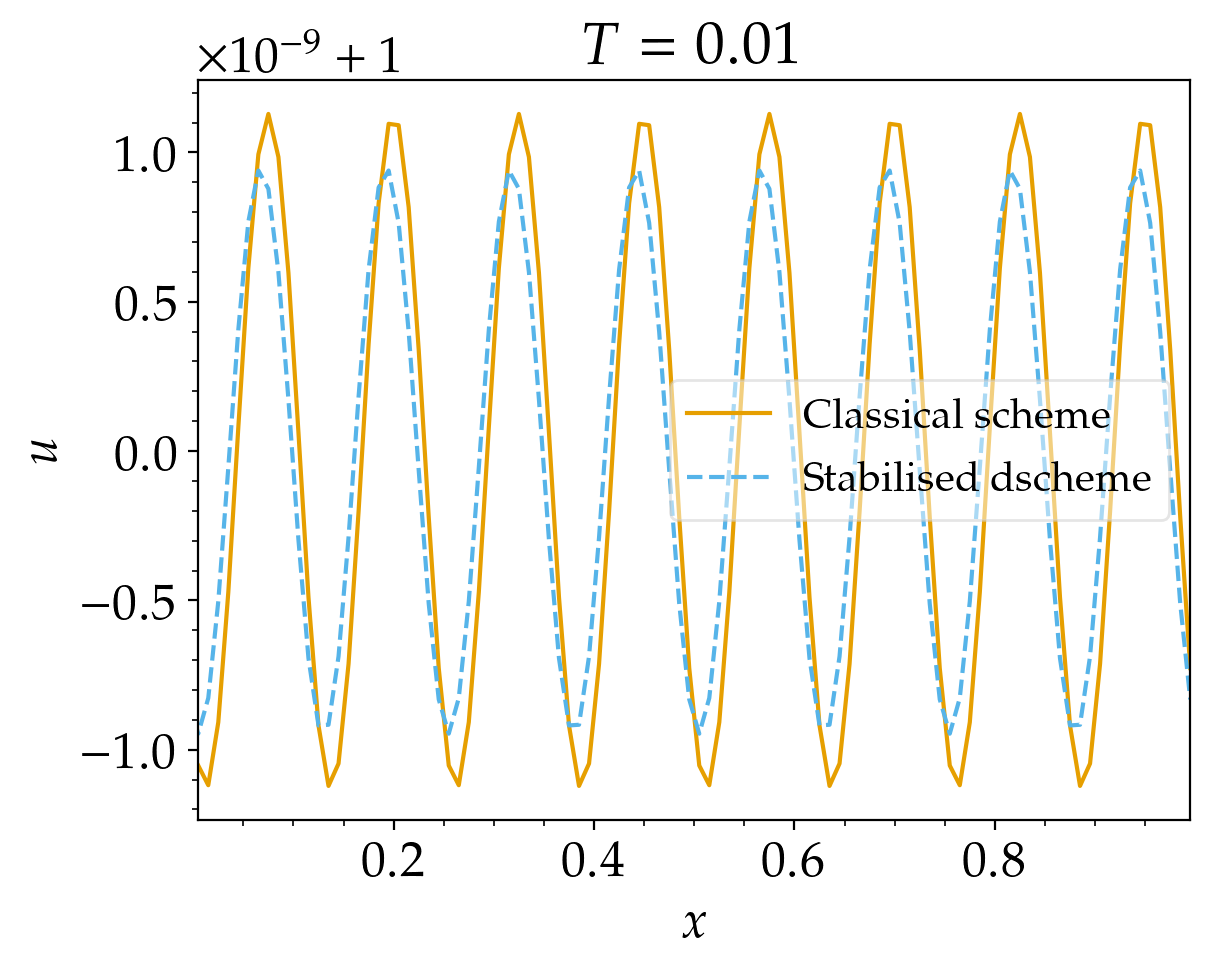}
    \includegraphics[height=0.20\textheight]{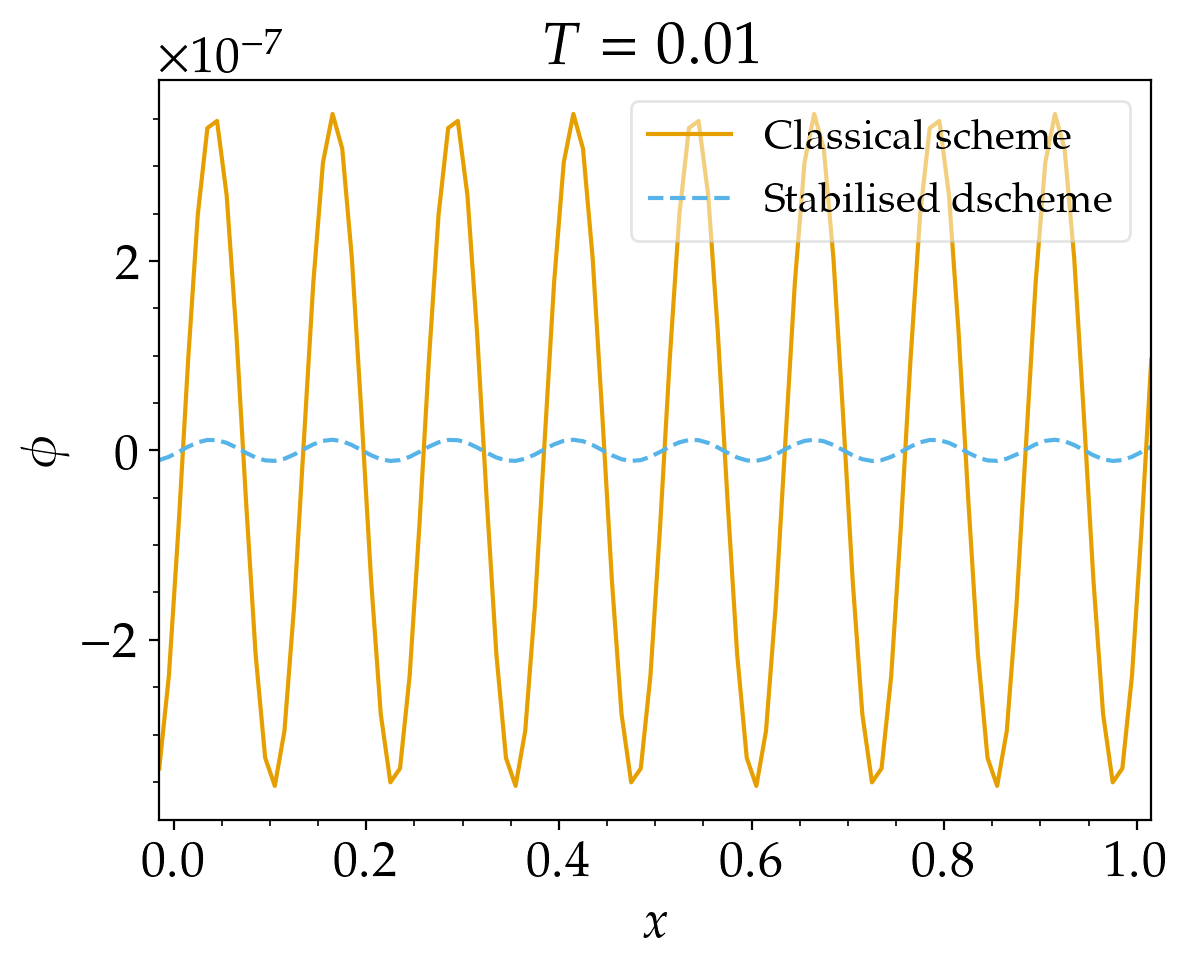}
    \caption{1D Periodic Perturbation of a Quasi-neutral State. Density, velocity and potential plots with respect to classical scheme at $t = 0.01$.} 
    \label{fig:1d_perturb_periodic_t0p01}
\end{figure}

\begin{figure}[htbp]
    \centering
    \includegraphics[height=0.20\textheight]{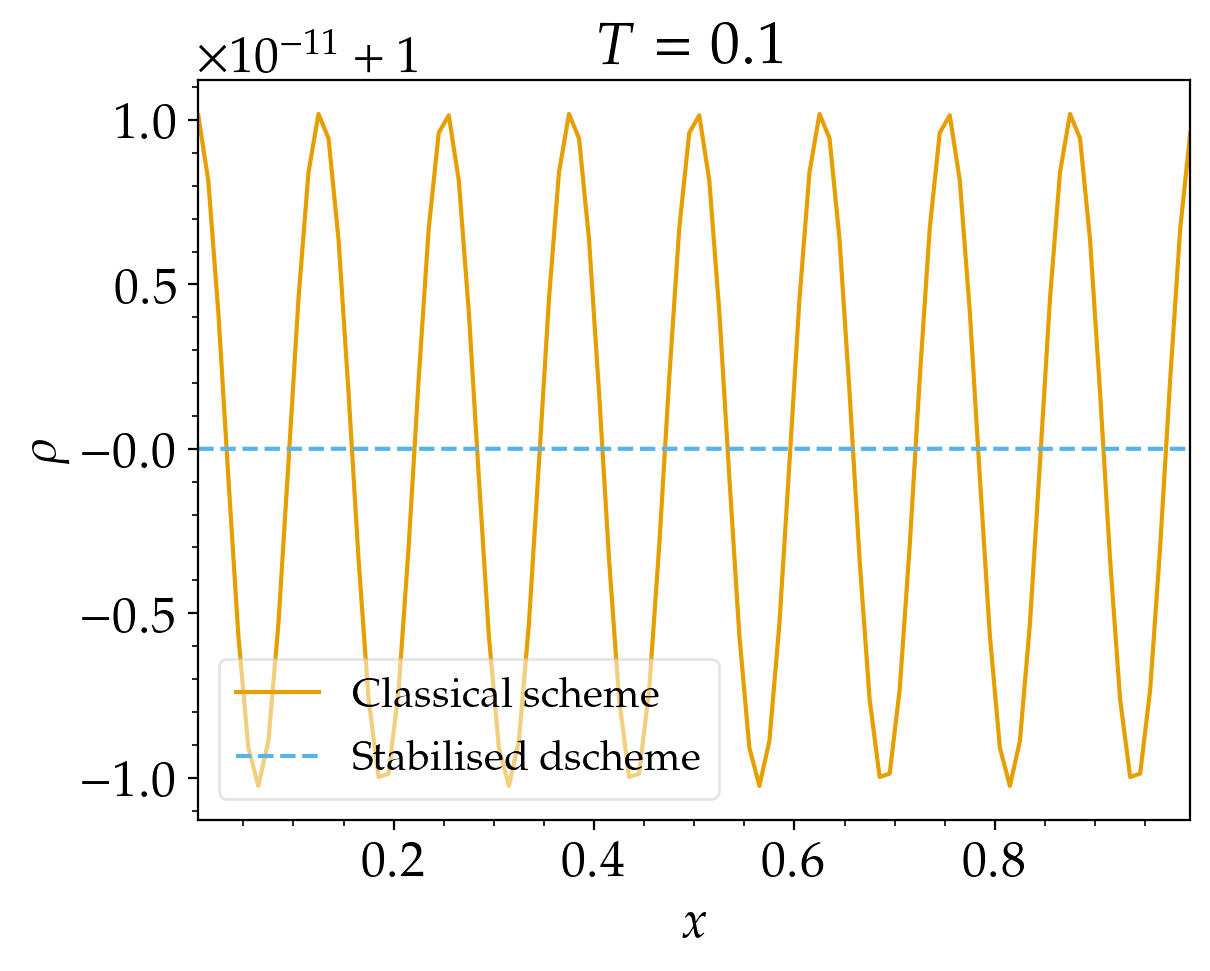}
    \includegraphics[height=0.20\textheight]{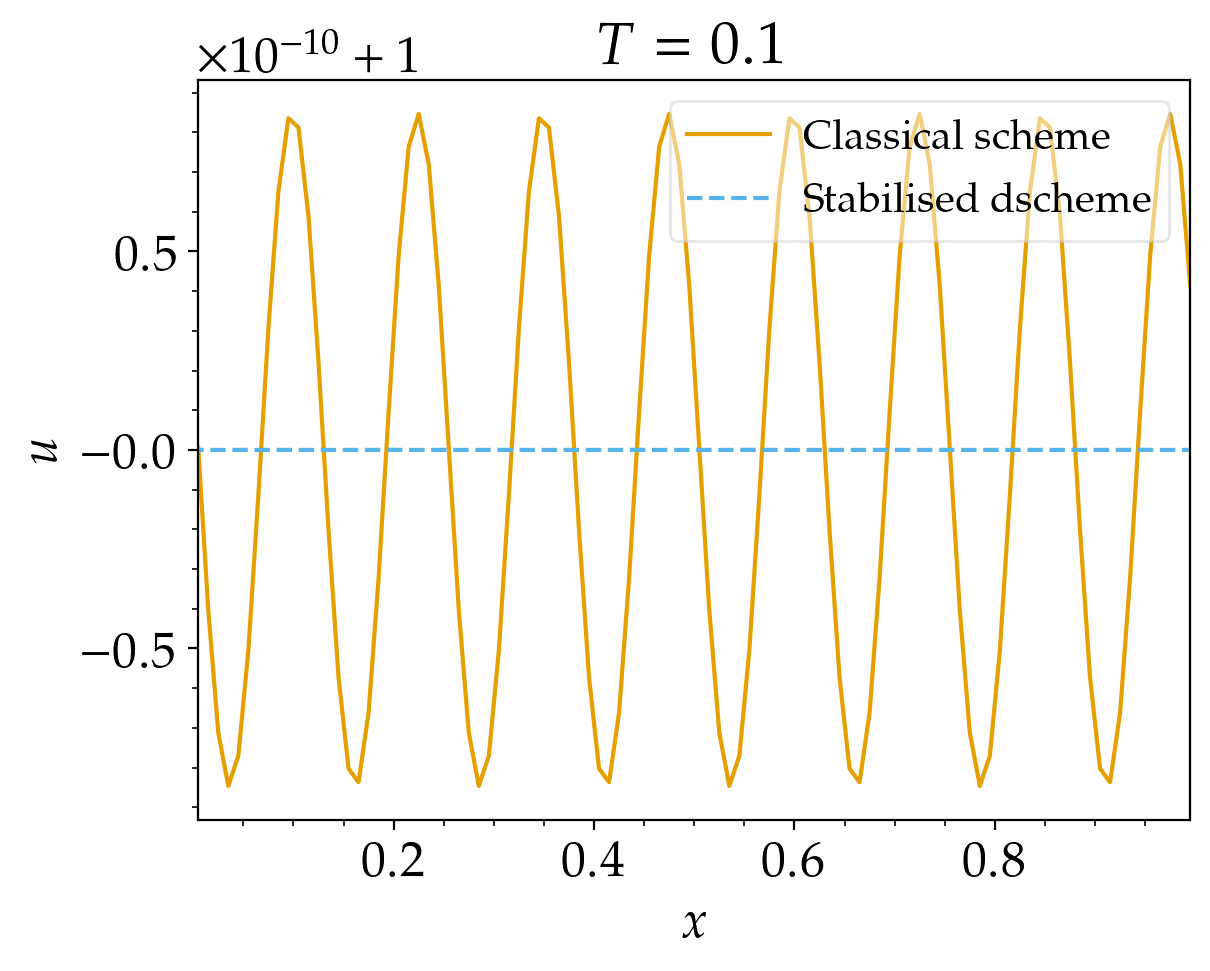}
    \includegraphics[height=0.20\textheight]{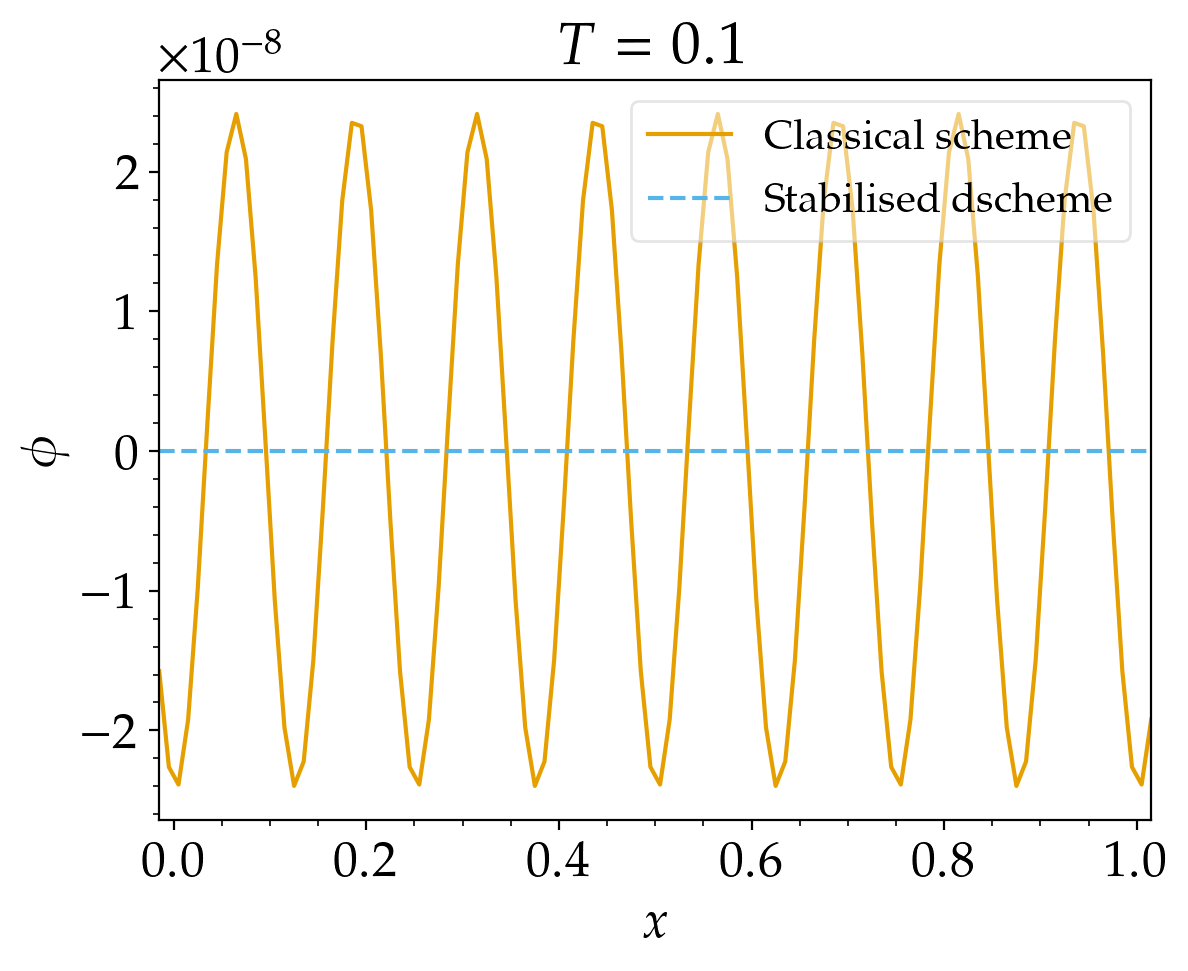}
    \caption{1D Periodic Perturbation of a Quasi-neutral State. Density, velocity and potential plots with respect to classical scheme at $t = 0.1$.} 
    \label{fig:1d_perturb_periodic_t0p1}
\end{figure}

\subsection{1D Maxwellian Perturbation of a Stationary Solution}
\label{sec:1d_perturb_maxwell}
We consider the following initial data inspired by \cite{Neg13} which is given as a small perturbation of a static equilibrium with a constant background density. The initial condition reads
\begin{equation}
    \rho(0, x) = 1.0 + \delta \sin(\kappa\pi x) 
\end{equation}
with the frequency $\kappa = 2220$. For this problem the domain of the plasma is $[0,1]$ with periodic boundary conditions and we choose the adiabatic constant $\gamma = 5/3$. The choices for the small amplitude $\delta$ has been taken as $\veps$ and $\veps^2$ to simulate a non-well-prepared and well-prepared data respectively, cf.\ \eqref{eq:wp_id}. The aim of this numerical test is to exhibit the scheme's ability to recover the steady state. We choose $\veps = 10^{-4}$ and test the semi-implicit scheme \eqref{eq:dis_ep} against the classical scheme \eqref{eq:cl_dis_ep} on a coarse mesh of $100$ mesh points that does not resolve $\veps$. We plot the density, velocity and potential profiles at time $t = 0.1$ in the Figures \ref{fig:1d_perturb_maxwell_eps}, \ref{fig:1d_perturb_maxwell_eps2} for amplitudes $\delta = \veps$ and $\delta = \veps^2$ respectively. The figures clearly indicate that the semi-implicit scheme effectively recovers the equilibrium in the long time asymptotic than the classical scheme on a coarse mesh despite being implemented with a more restrictive time-stepping condition.
\begin{figure}[htbp]
    \centering
    \includegraphics[height=0.18\textheight]{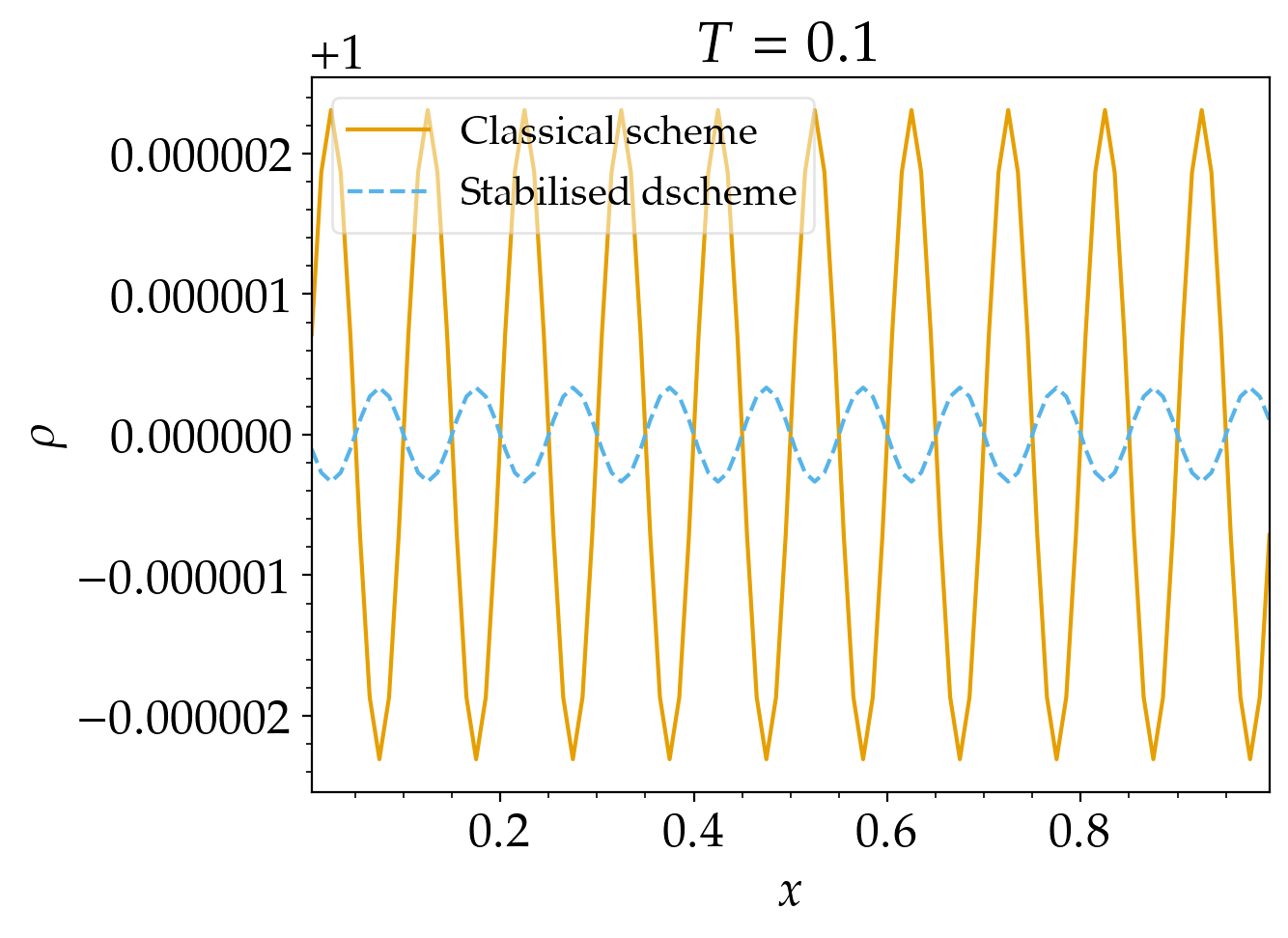}
    \includegraphics[height=0.18\textheight]{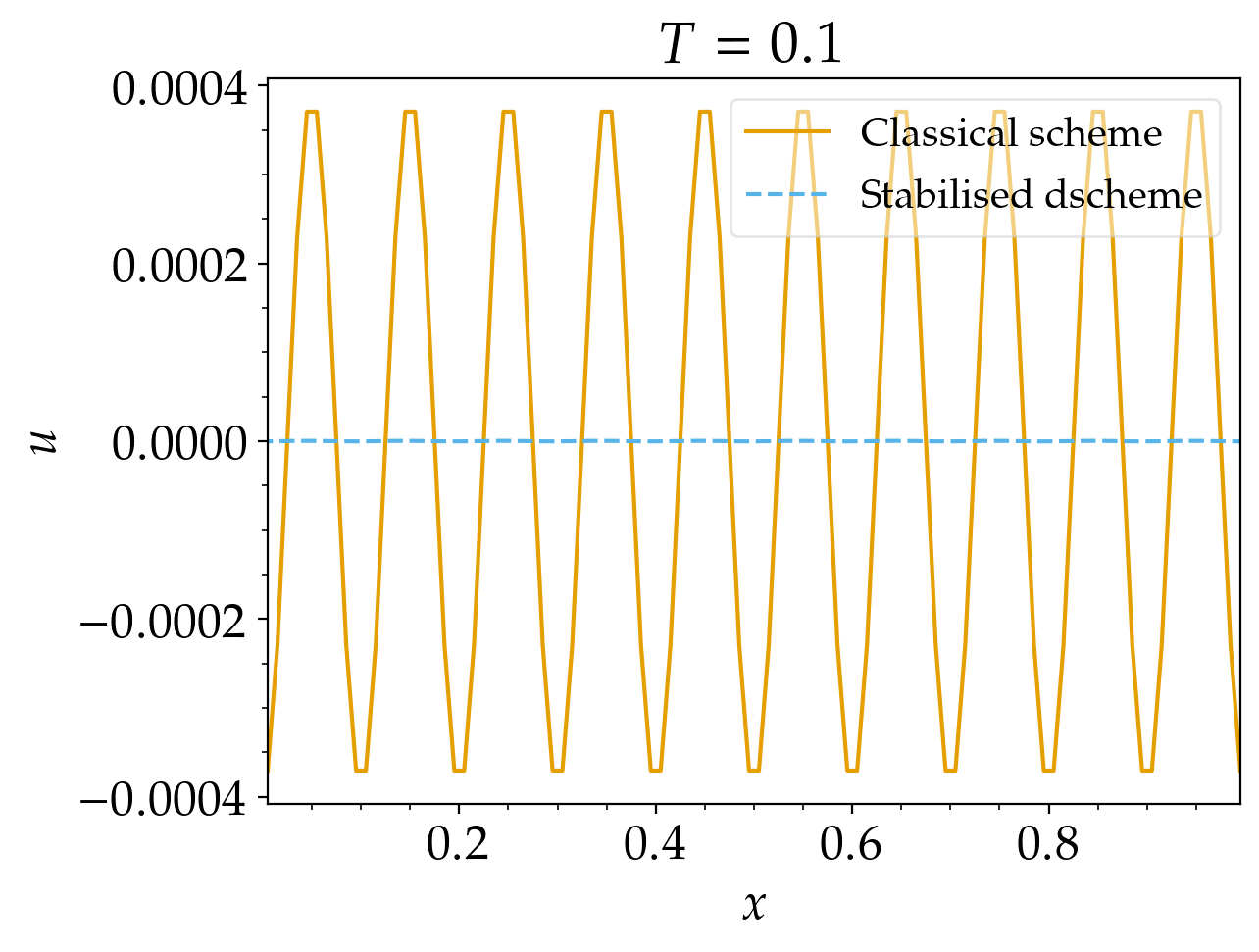}
    \includegraphics[height=0.18\textheight]{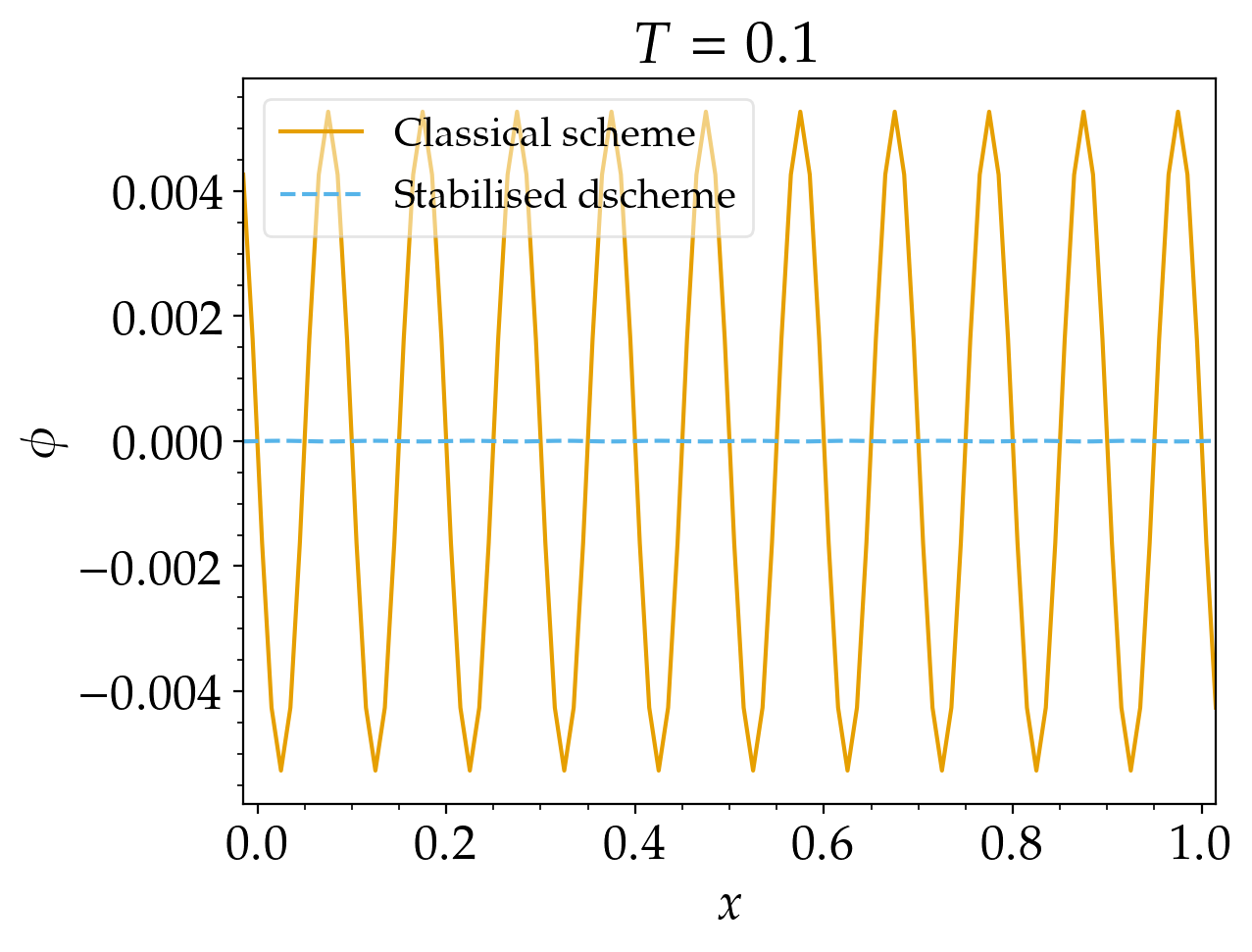}
    \caption{1D Maxwellian Perturbation of a Stationary Solution. Density, velocity and potential plots with respect to classical scheme for $\delta = \veps$} 
    \label{fig:1d_perturb_maxwell_eps}
\end{figure}
\begin{figure}[htbp]
    \centering
    \includegraphics[height=0.20\textheight]{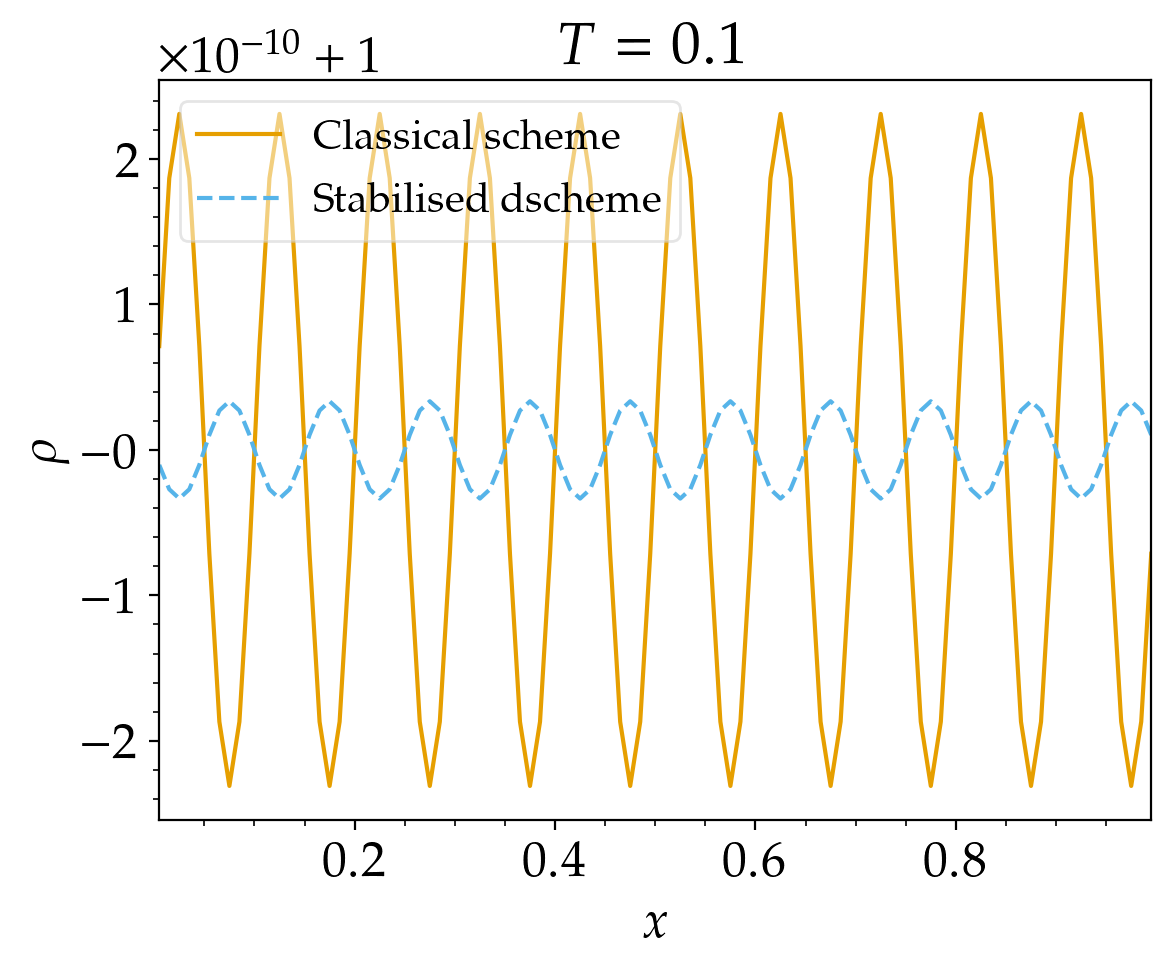}
    \includegraphics[height=0.20\textheight]{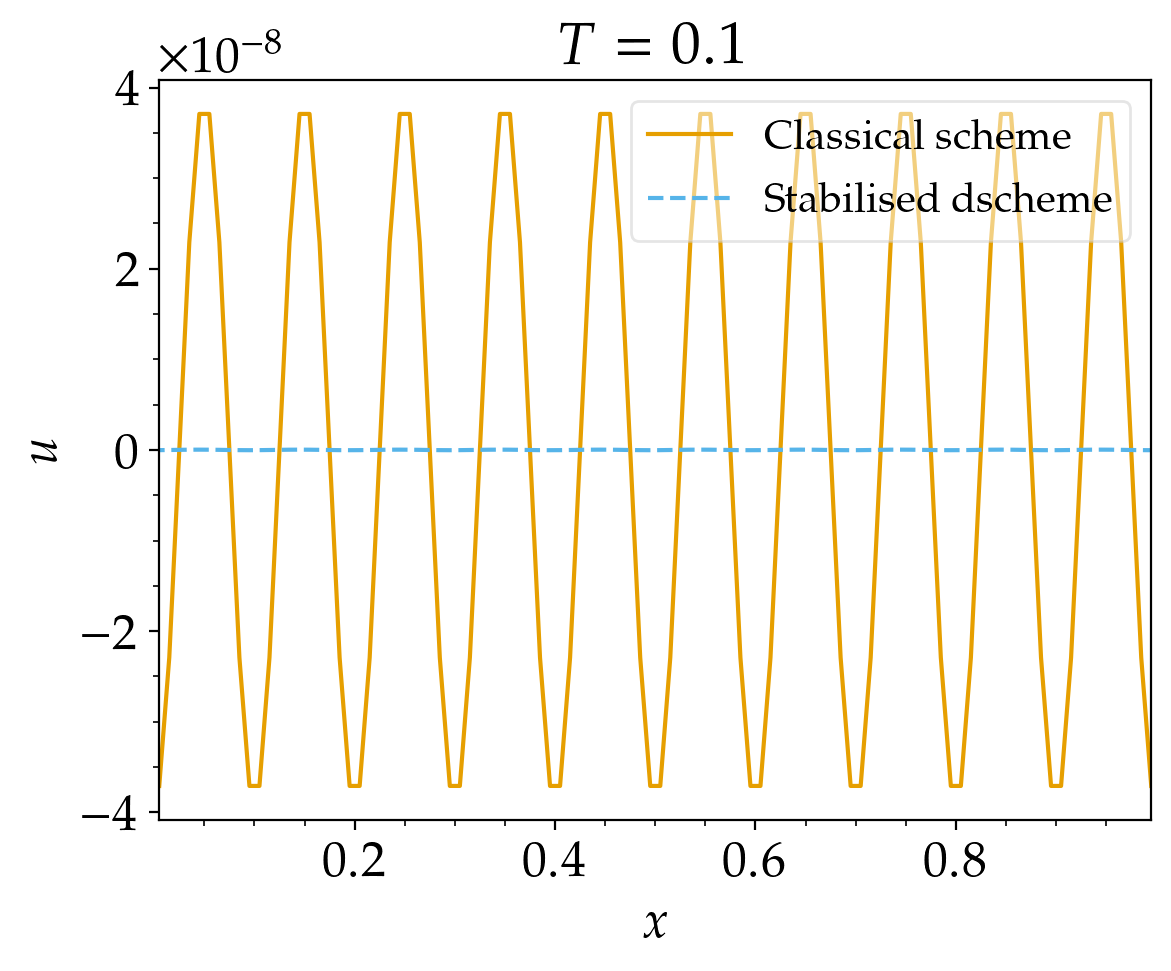}
    \includegraphics[height=0.20\textheight]{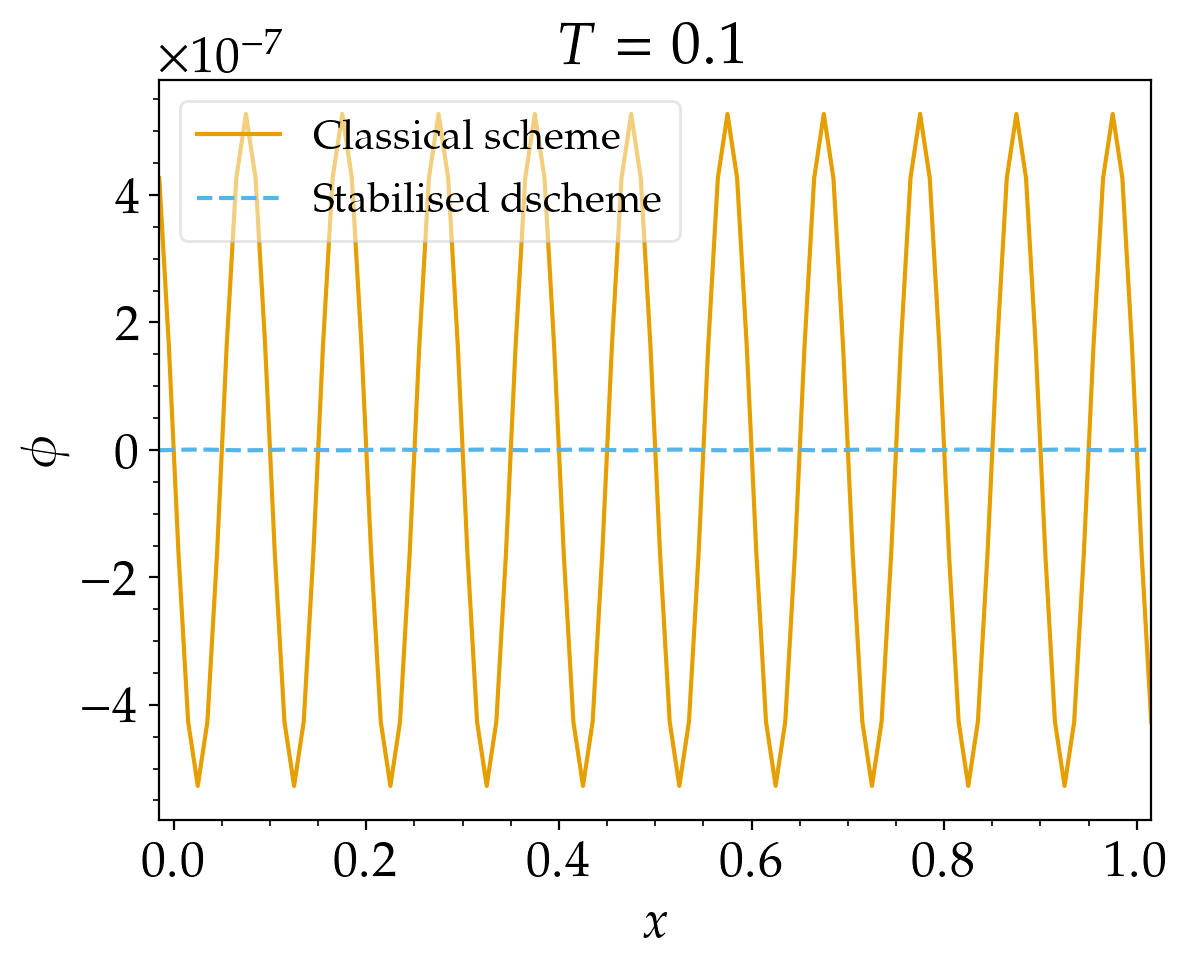}
    \caption{1D Maxwellian Perturbation of a Stationary Solution. Density, velocity and potential plots with respect to classical scheme for $\delta = \veps^2$} 
    \label{fig:1d_perturb_maxwell_eps2}
\end{figure}

\subsection{Oscillation of a Plasma Column}
\label{sec:osc_plasma}
We consider the following numerical test case from \cite{MST23} where we simulate the oscillation of a plasma column in the domain $\Omega = [0,1]\times[0,1]$. The initial conditions are given by
\begin{equation}
\rho(0, x, y)=\begin{cases}
1 - \delta, &\mathrm{if}\; x < 5.0\\
1 + \delta, &\mathrm{if}\; x \geq 0.5
\end{cases}, \;
\uu{u}(0, x, y) = (0, 0).
\end{equation}
We choose the parameter $\delta = \veps$ with $\veps = 10^{-3}$. The plasma frequency is given by $\omega = 1/\veps$ and hence the plasma period for this configuration is obtained as $t_p = 2\pi\veps$. The problem is supplemented with the homogeneous Neumann boundary condition on $\phi$, i.e.\ $\bgrd \phi\cdot\uu{\nu} = 0$ on the boundary $\partial\Omega$. Also we consider a no-flux boundary condition $\uu{u}\cdot\uu{\nu} = 0$ for the velocity component. We choose the adiabatic constant $\gamma = 1.4$. The domain is discretised using $100\times100$ grid points and we plot the cross sections of the profiles of the density, $x$-velocity and the potential at times $t=0, \frac{1}{2}t_p, t_p$ in Figure \ref{fig:osc_plasma}. We observe that the scheme is capable of simulating the plasma column oscillation while preserving the stationary contact discontinuities at $x=0.5$. As indicated by the density profile at $t = t_p$, the plasma column comes back to its initial configuration after one period.
\begin{figure}[htbp]
    \centering
    \includegraphics[height=0.155\textheight]{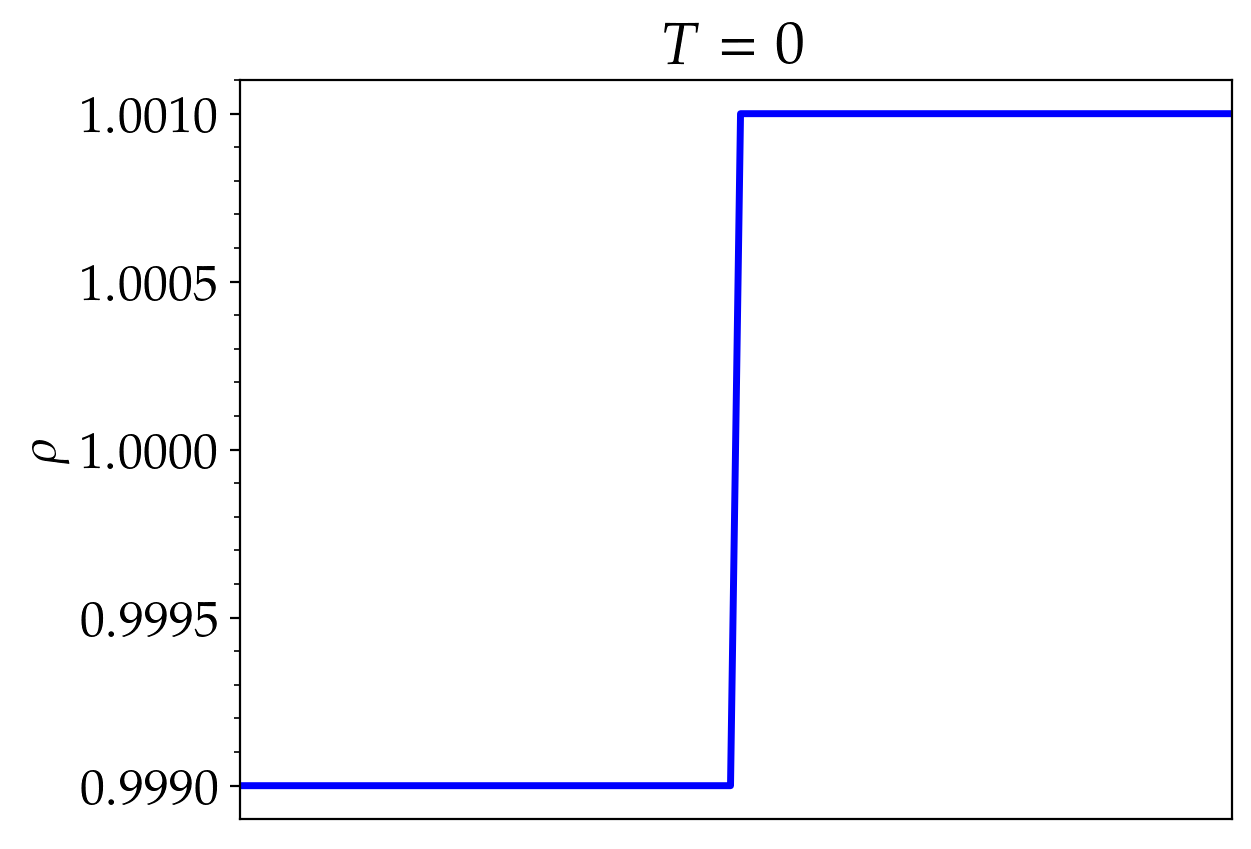}
    \includegraphics[height=0.155\textheight]{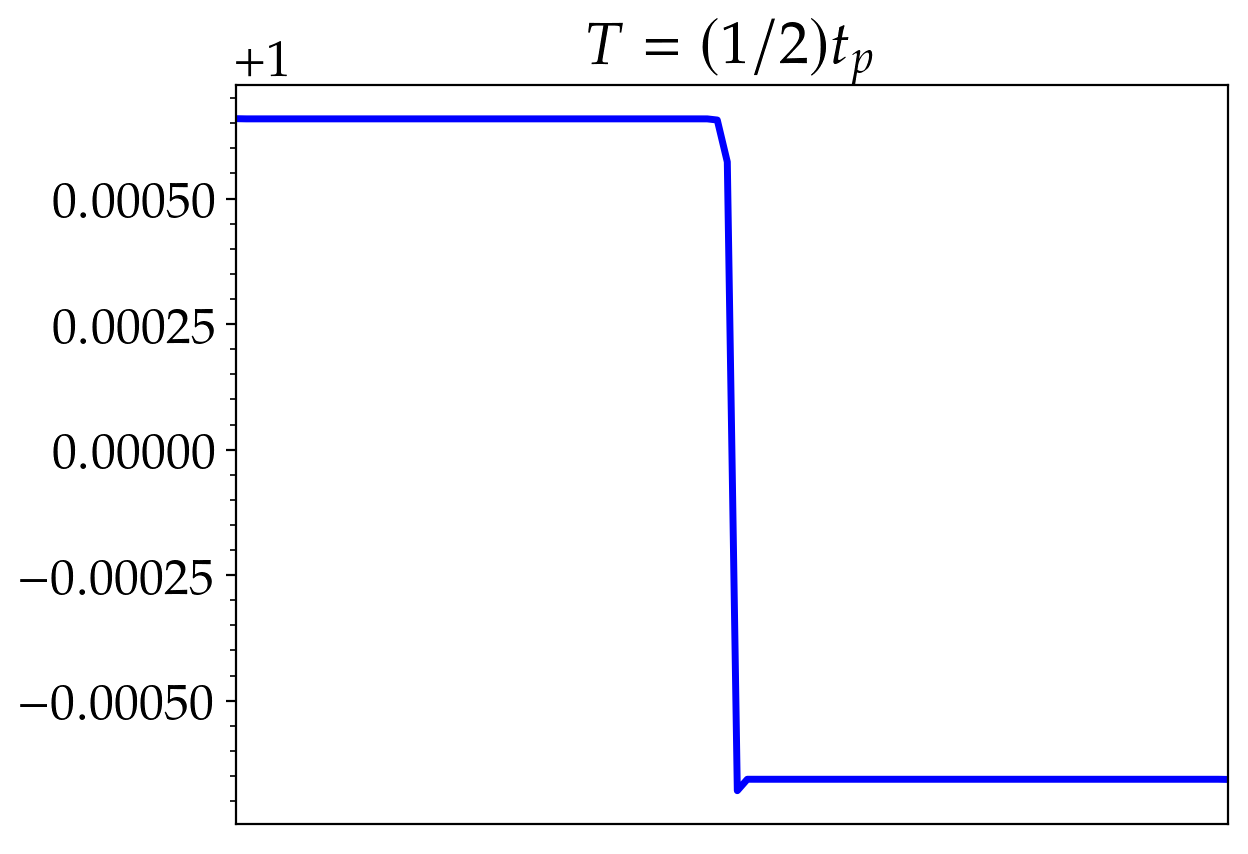}
    \includegraphics[height=0.155\textheight]{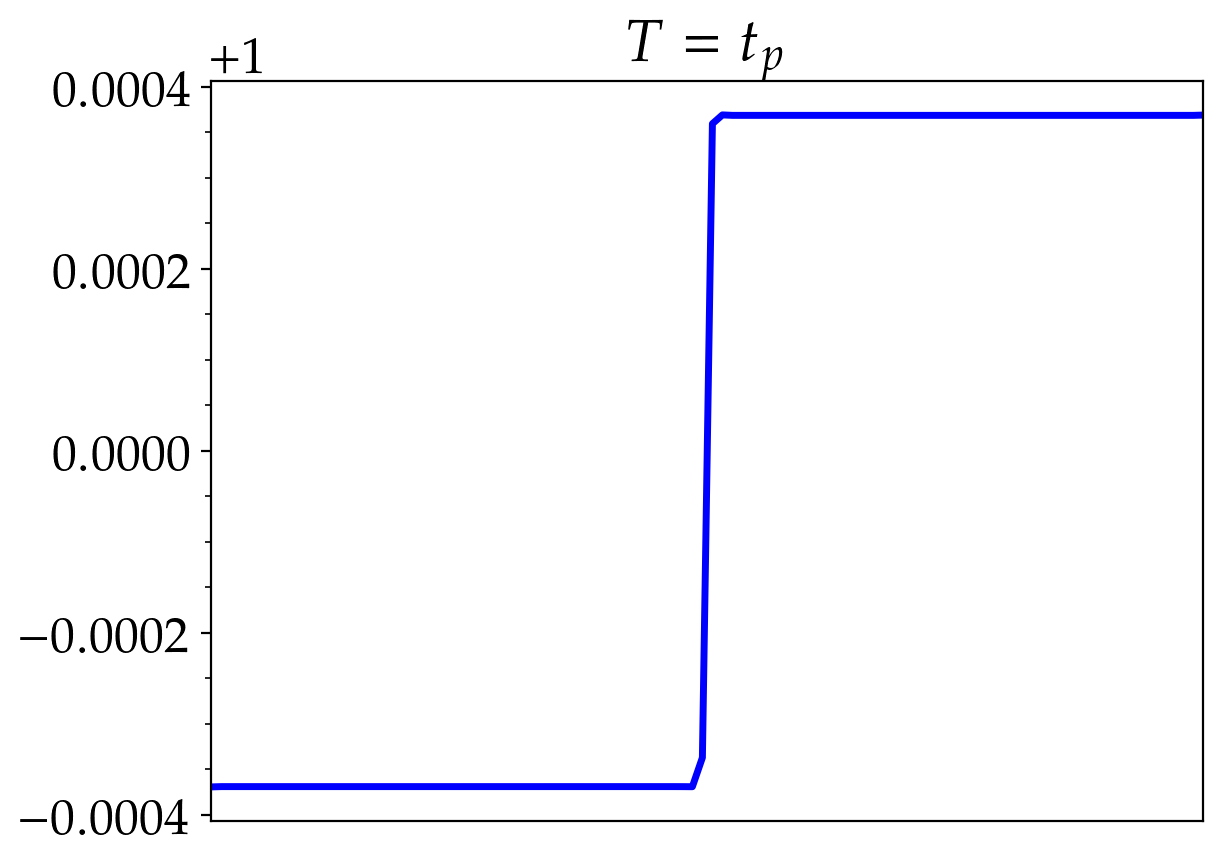}
    \includegraphics[height=0.15\textheight]{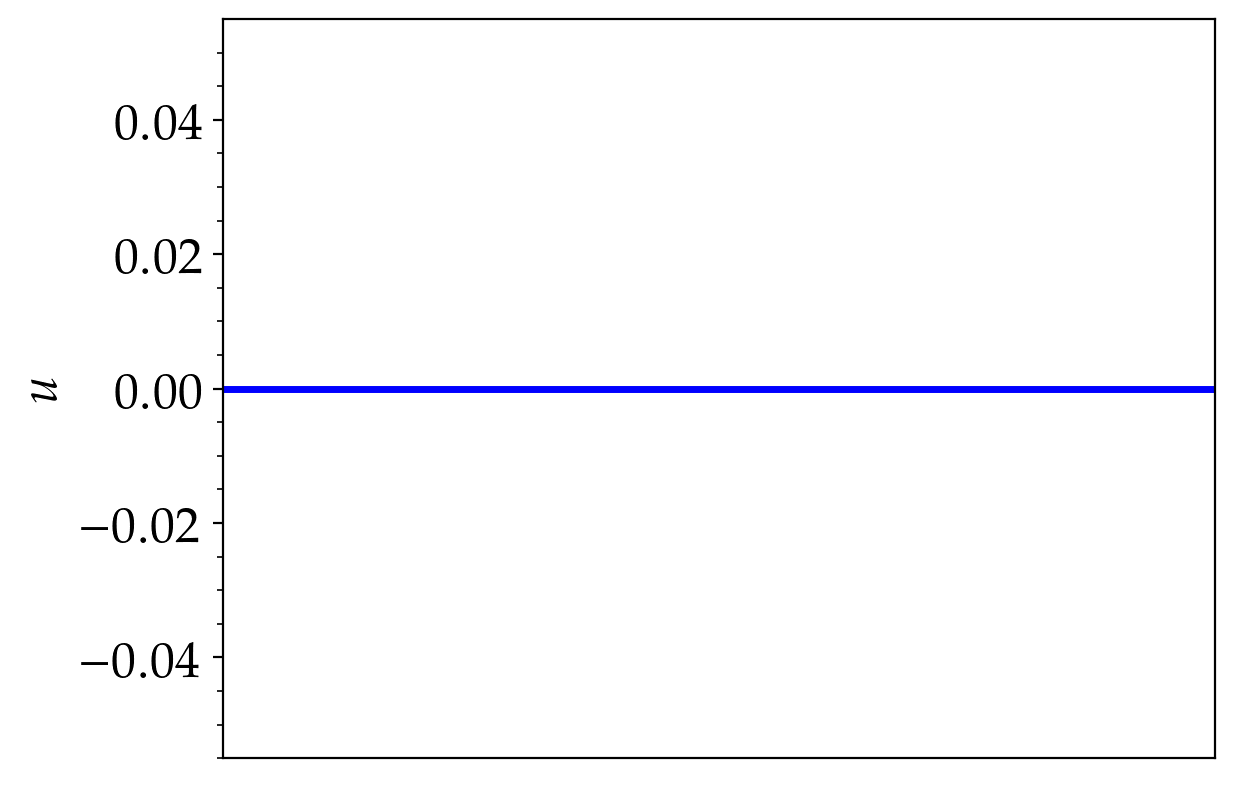}
    \includegraphics[height=0.15\textheight]{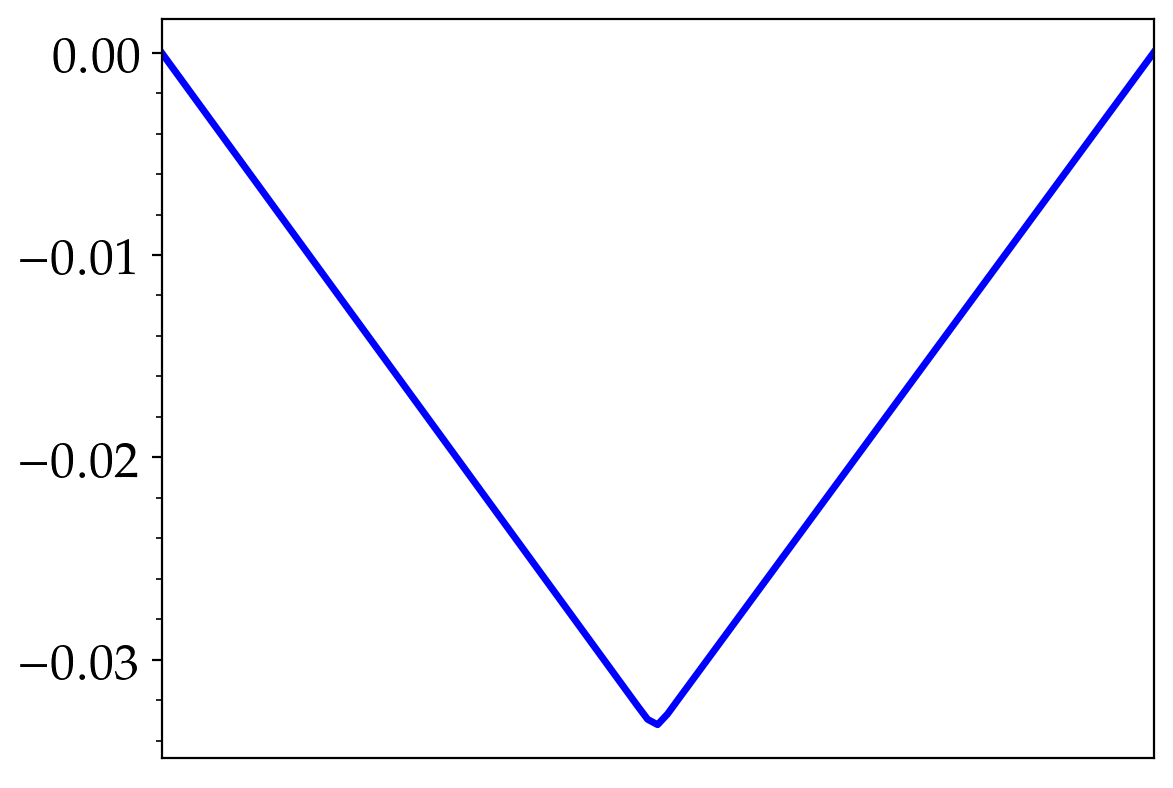} 
    \includegraphics[height=0.15\textheight]{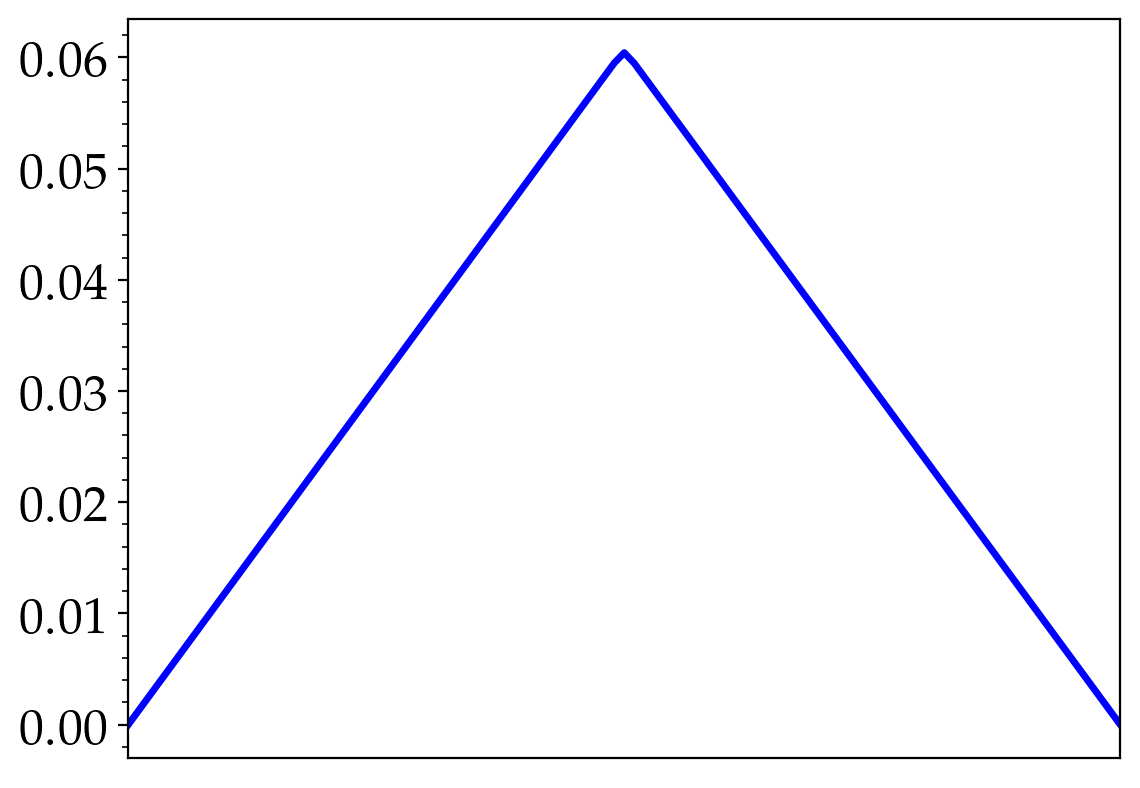}
    \includegraphics[height=0.175\textheight]{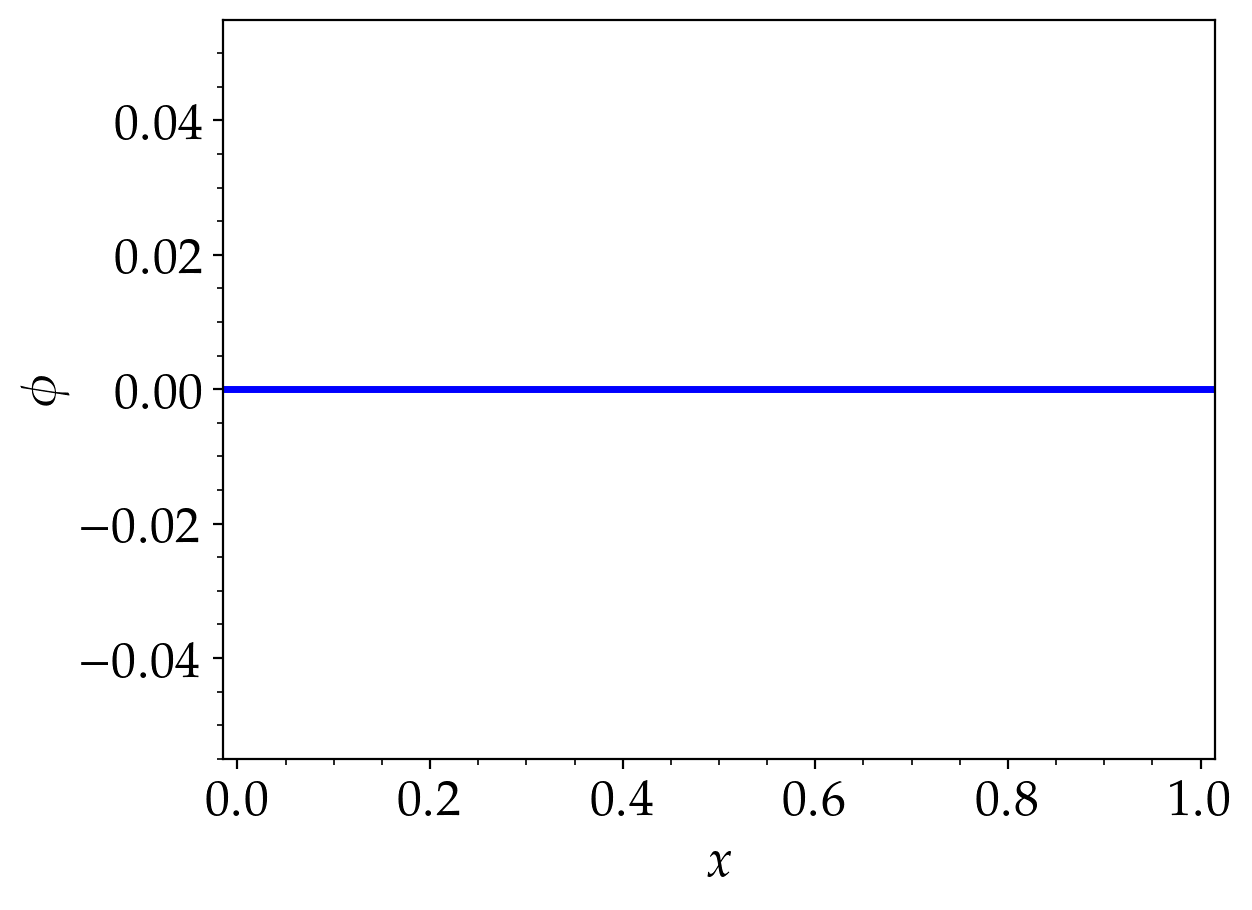}
    \includegraphics[height=0.175\textheight]{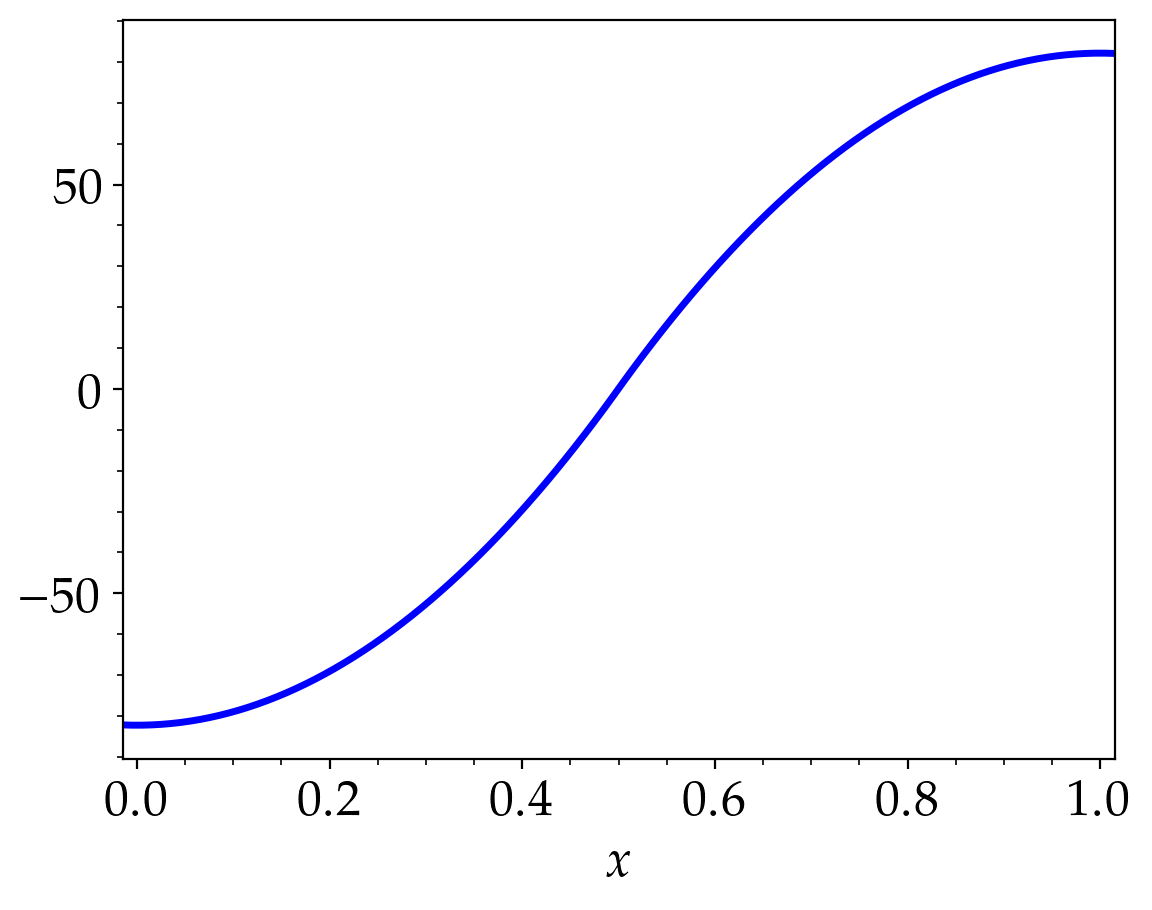}
    \includegraphics[height=0.175\textheight]{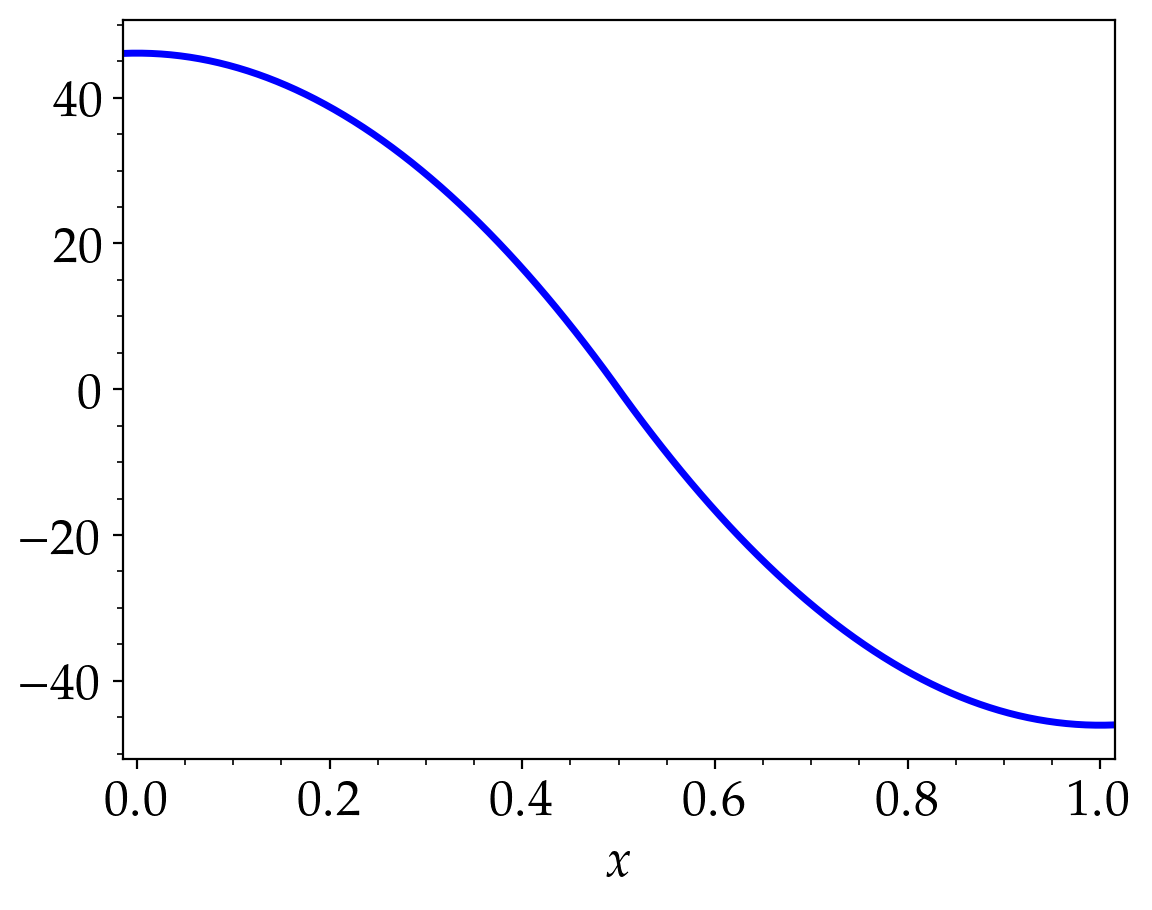}
    \caption{Oscillation of a Plasma Column. Density, velocity and potential plots at times $t=0, \frac{1}{2}t_p, t_p$ with $\veps = 10^{-3}$} 
    \label{fig:osc_plasma}
\end{figure}

\subsection{Periodic Perturbation of a Quasineutral State in 2D}
\label{sec:1d_perturb_periodic}
We consider the following initial data following \cite{CDV07, DT11} where we consider a small perturbation of a quasi-neutral state in the domain $\Omega = [0, 1]\times[0, 1]$ in $\mbb{R}^2$. The quasineutral state in 2D is described by a constant density $1$ and uniform constant velocities in both the direction. Periodic perturbations with a small amplitude is added to the quasineutral velocity field. The constant density implies that $\phi = 0$ from the Poisson Equation. The initial data reads
\begin{align}
    \rho(0, x, y) &= 1,
    \\
    u(0, x, y) &= 1 + \delta\sin(k\pi(x+y)),
    \\
    v(0, x, y) &= 1 + \delta\cos(k\pi(x+y)),
    \\
    \phi(0, x, y) &= 0,
\end{align}
where the frequency of the perturbation $\kappa$ is chosen as $\kappa = 16$. We choose the small amplitude of the periodic perturbation $\delta = \veps^2$ in order to make the data well prepared. For this problem we choose the adiabatic constant $\gamma = 2$. We implement periodic boundary conditions on all the sides of the domain which is discretised using $100\times100$ grid points. The aim of this test case is to show that the semi-implicit scheme's capability for recovering a quasineutral state in 2D. We plot the density, potential and divergence profiles in Figure \ref{fig:qn2d_per_e-2} at times $t = 0.5, 2$ for $\veps = 10^{-2}$ and observe that the scheme converges to the quasineutral limit even with a larger value of $\veps$. In Figure \ref{fig:qn2d_per_e-4} we plot the density, potential and the divergence profiles at time $t=0.005$ with $\veps=10^{-4}$ which clearly indicates that the scheme recovers a quasineutral state rather quickly with a smaller value of the amplitude of the periodic perturbation.  
\begin{figure}[htbp]
    \centering
    \includegraphics[height=0.20\textheight]{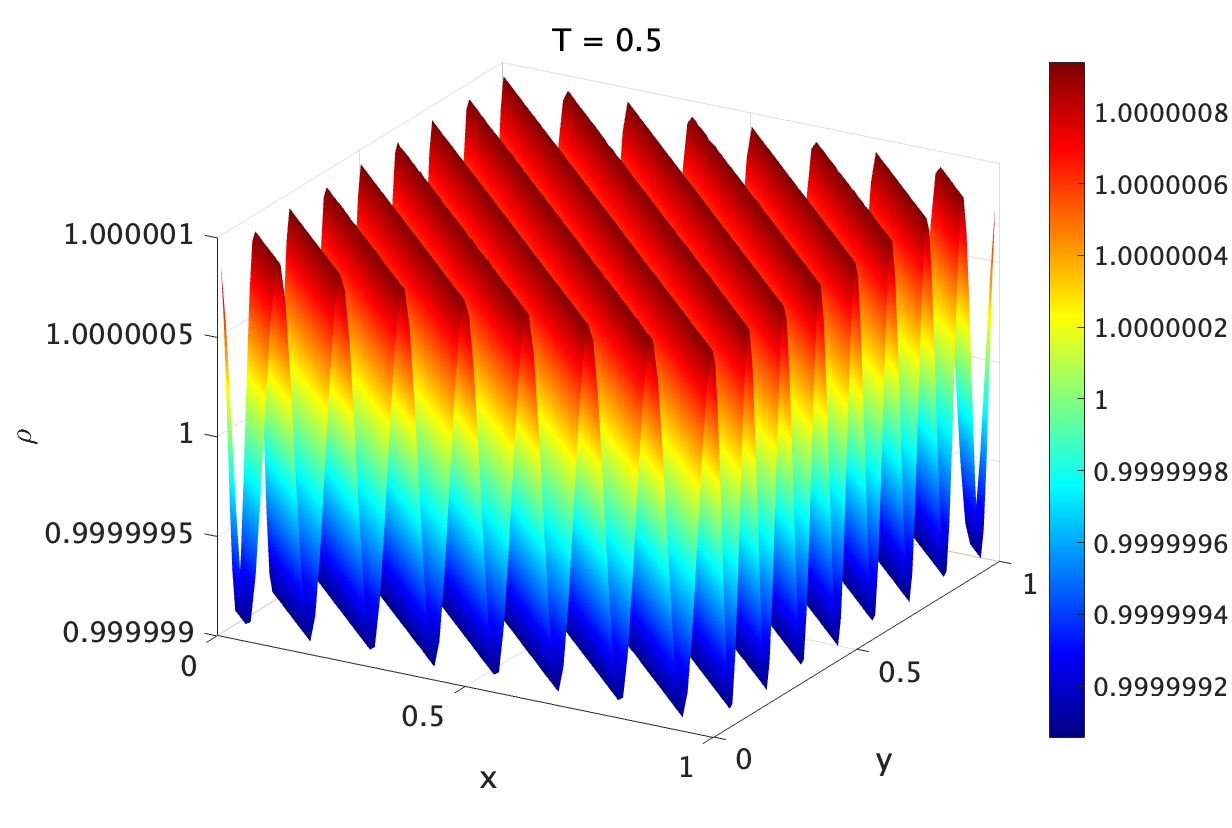}
    \includegraphics[height=0.20\textheight]{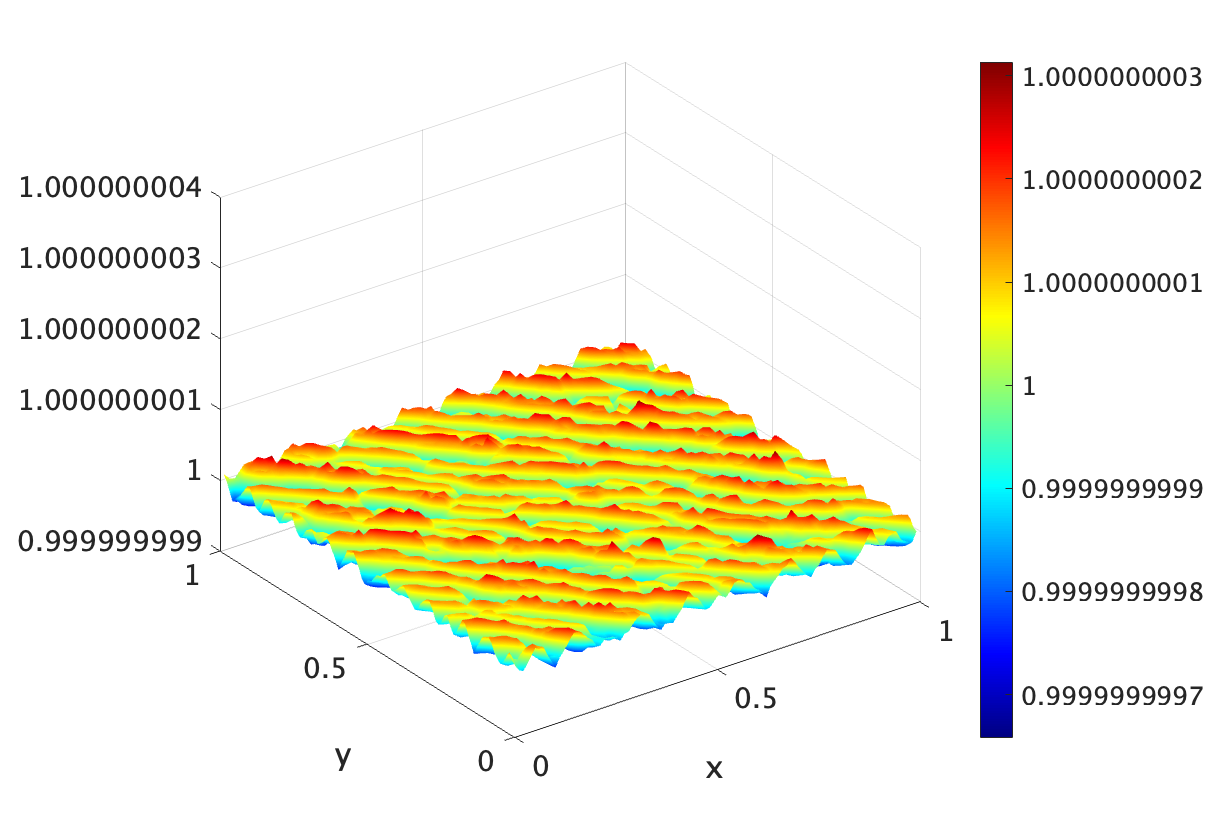}
    \includegraphics[height=0.20\textheight]{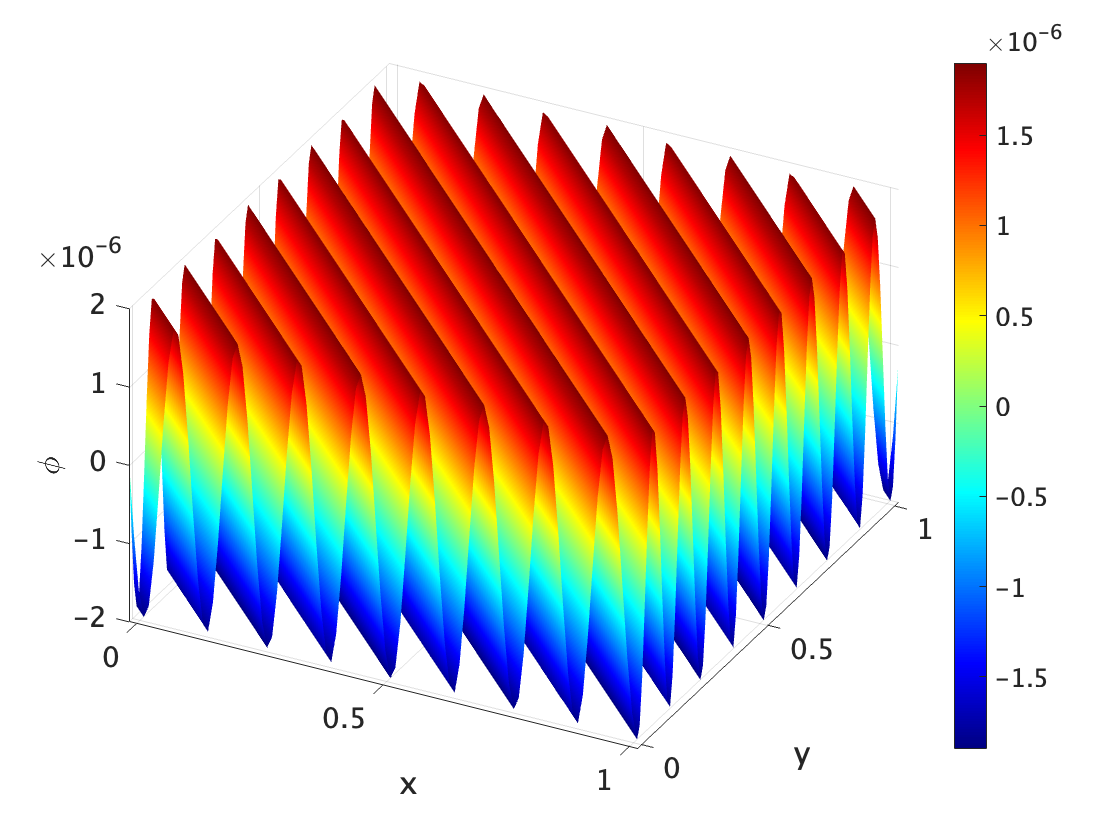}    \includegraphics[height=0.20\textheight]{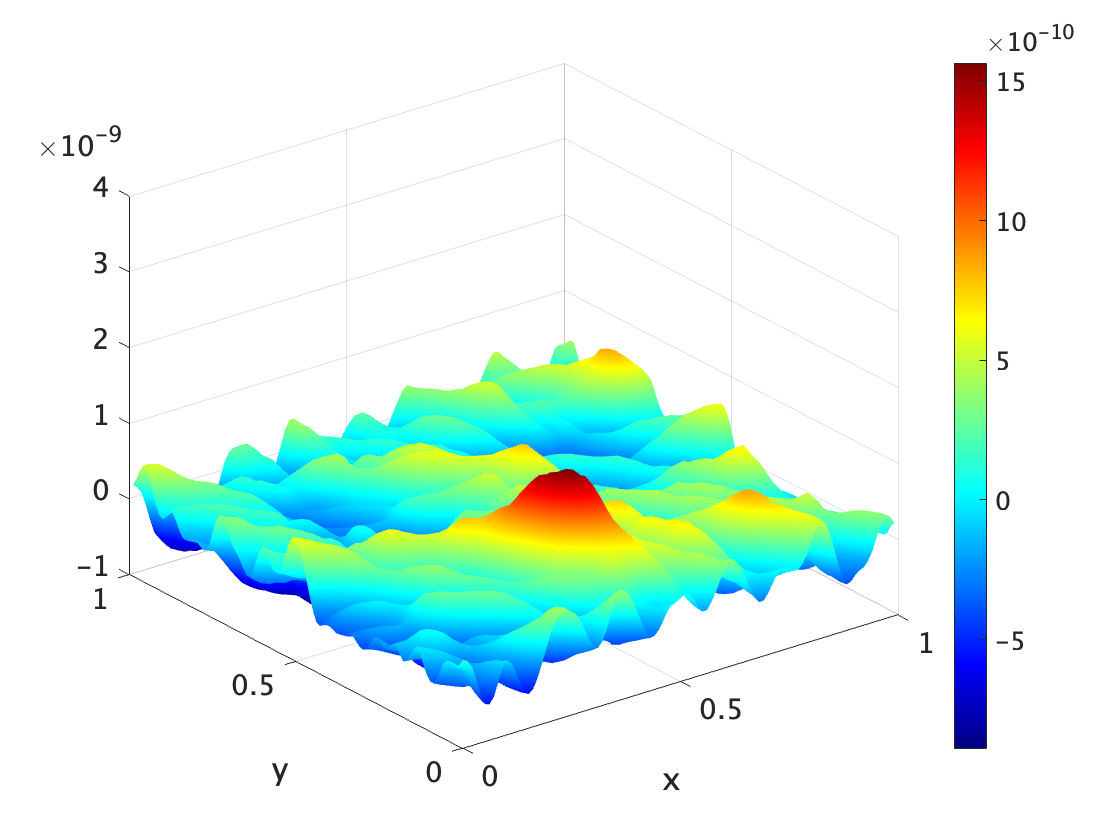}
    \includegraphics[height=0.20\textheight]{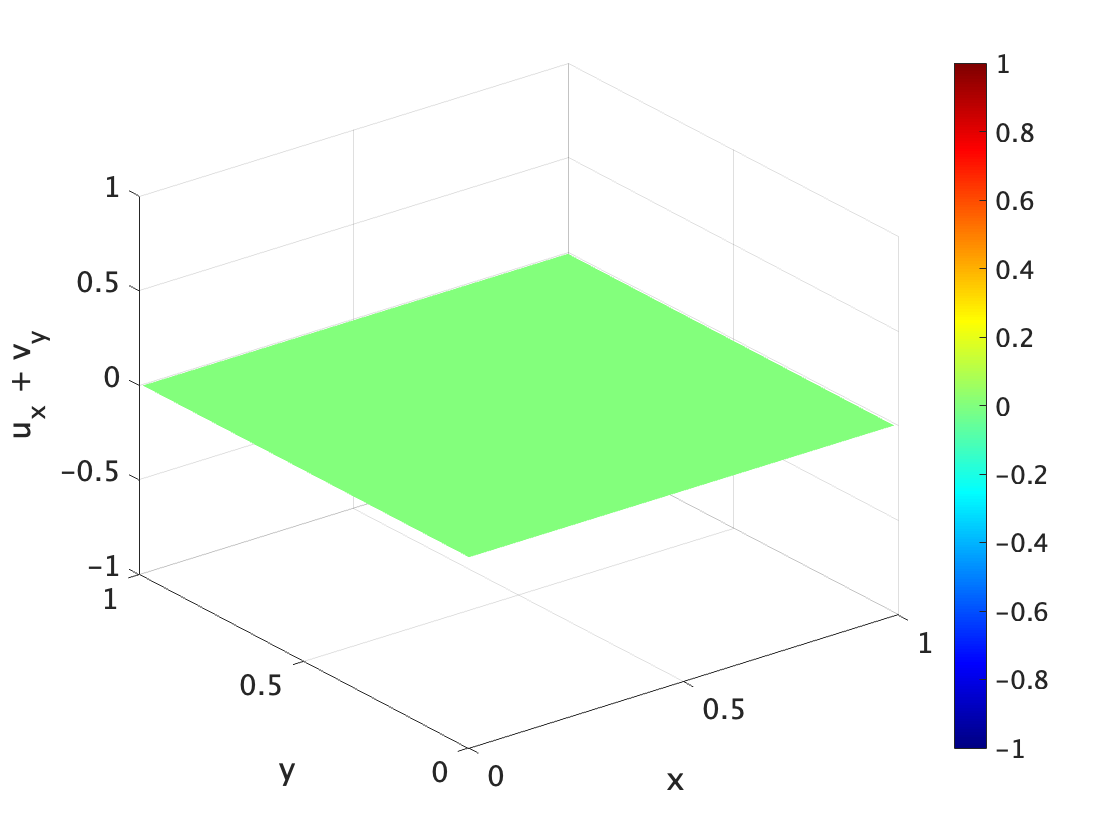} 
    \includegraphics[height=0.20\textheight]{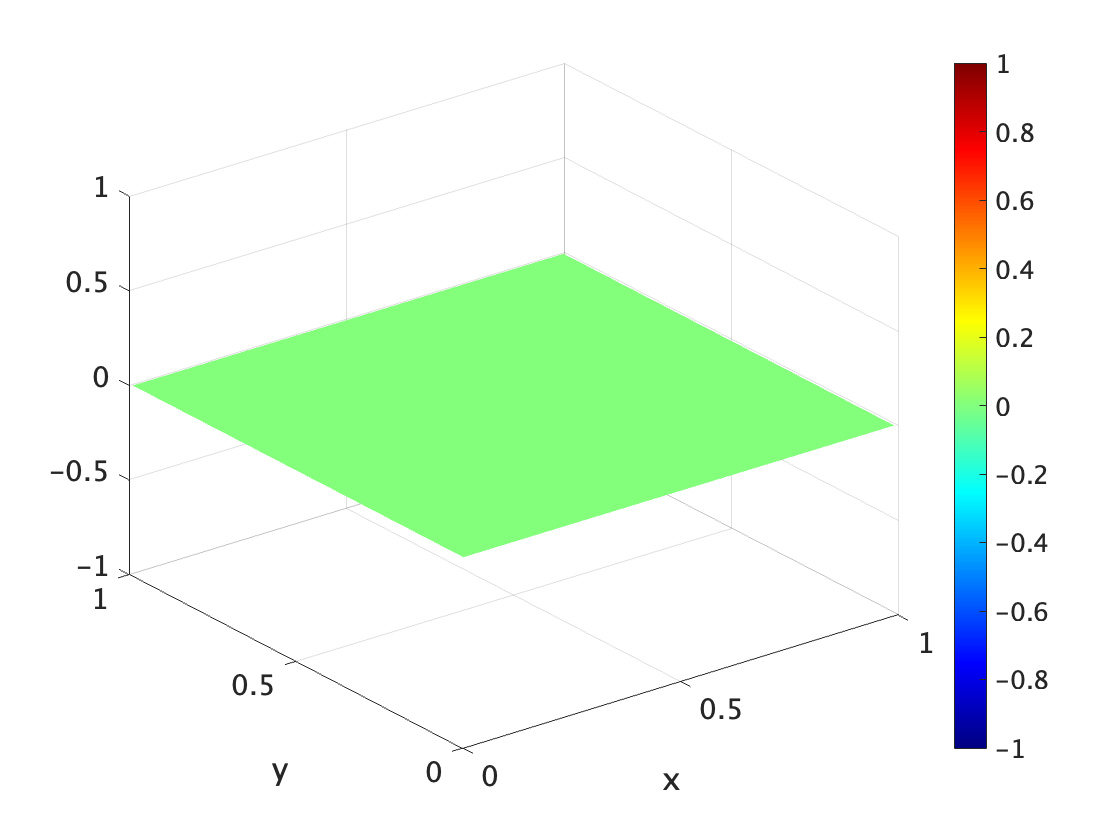}
    \caption{Periodic Perturbation of a Quasi-neutral State in 2D. Density, potential and divergence plots at times $t=0.5, 2.0$ with $\veps = 10^{-2}$} 
    \label{fig:qn2d_per_e-2}
\end{figure}

\begin{figure}[htbp]
    \centering
    \includegraphics[height=0.15\textheight]{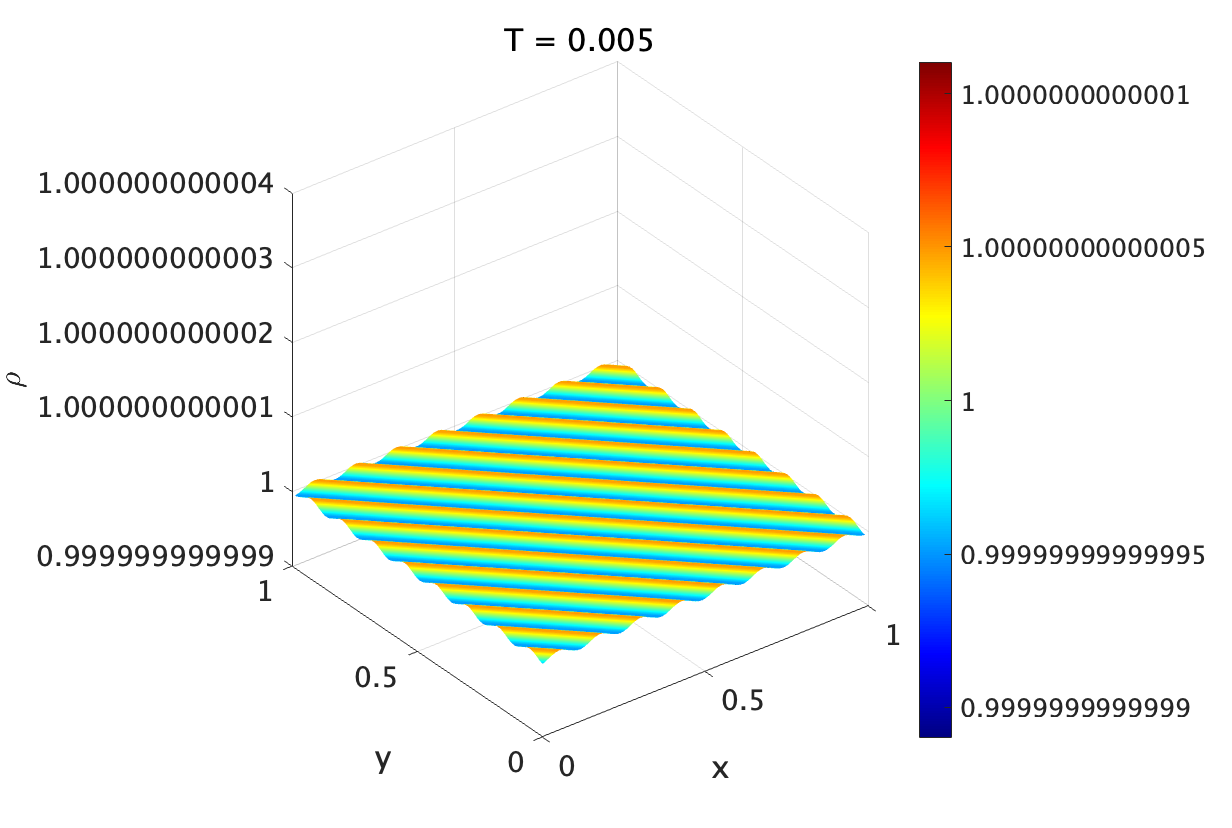}
    \includegraphics[height=0.15\textheight]{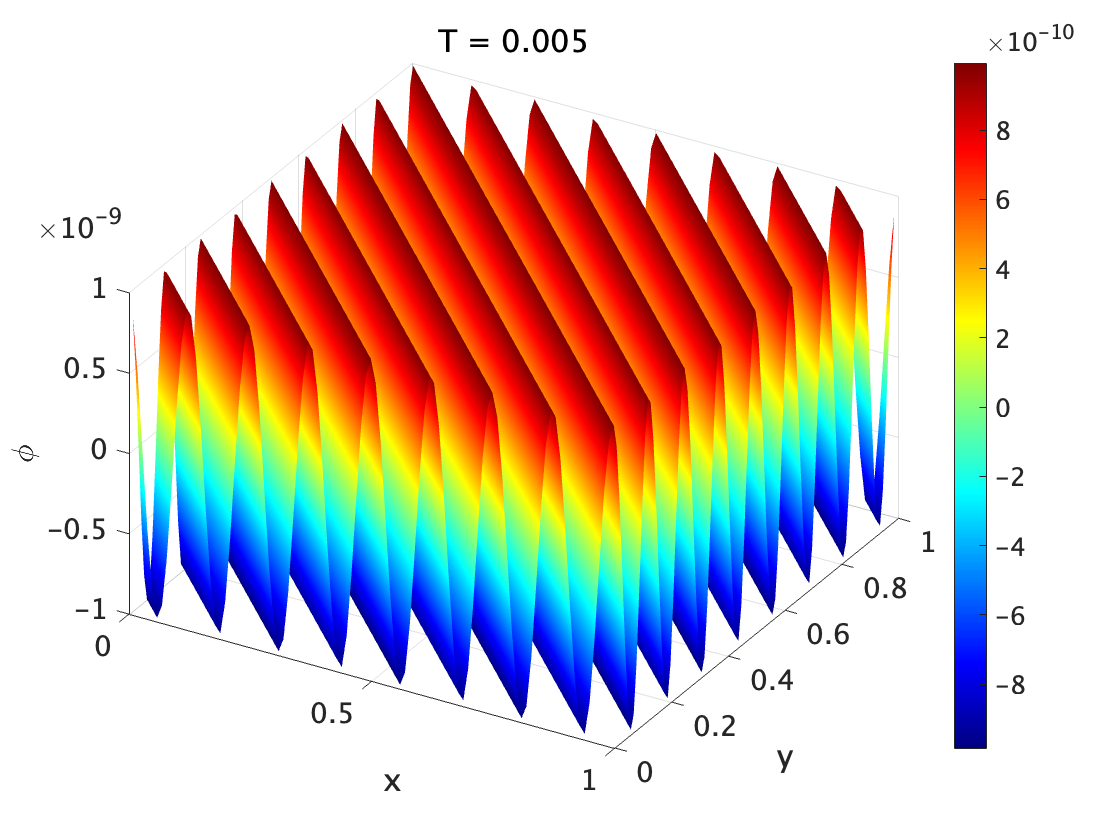}
    \includegraphics[height=0.15\textheight]{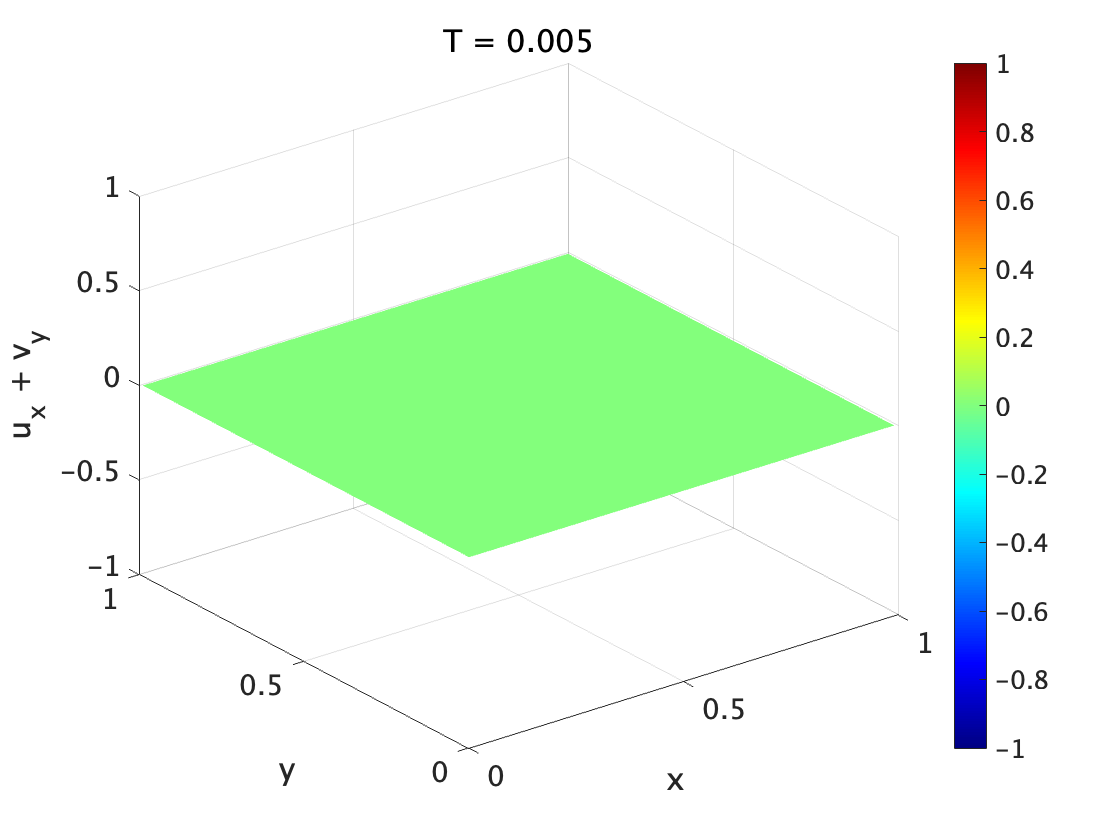}
    \caption{Periodic Perturbation of a Quasi-neutral State in 2D. Density, potential and divergence plots at time $t=0.005$ with $\veps = 10^{-4}$}     \label{fig:qn2d_per_e-4}
\end{figure}
\section{Conclusions and Future Works}
\label{sec:conclusion}
We have designed an energy stable scheme for the EP system under the quasineutral scaling. In order to achieve the decay of energy, a stabilisation technique has been used. Apriori energy bounds are established which in turn gives bounded numerical solutions. A Lax-Wendroff-type consistency of the numerical scheme with weak solutions of the continuous model as well as its consistency with the quasineutral limit have been shown. Results of numerical experiments are presented to substantiate the claims. 


\bibliographystyle{abbrv}
\bibliography{references}
\end{document}